\documentclass[11pt]{amsart}
\usepackage{amsmath,amssymb,graphicx,setspace,verbatim}
\usepackage[pagewise]{lineno}
\usepackage{color}
\usepackage{comment}
\usepackage{appendix}
\usepackage[margin=1.5in,footskip=0.2in]{geometry}
\theoremstyle{comment}

\newcommand{\PGL}{\mathrm{PGL}}

\newcommand{\Cyc}{\mathrm{C}} 
\newcommand{\Alt}{\mathrm{A}} 
\newcommand{\Sym}{\mathrm{S}}

\newtheorem*{mcomment}{\color{cyan}{Comment}}

\input xy
\xyoption{all}
\numberwithin{equation}{section}

\newtheorem{theorem}{Theorem}[section]
\newtheorem{lemma}[theorem]{Lemma}
\theoremstyle{definition}
\newtheorem{definition}[theorem]{Definition}
\newtheorem{proposition}[theorem]{Proposition}
\newtheorem{corollary}[theorem]{Corollary}

\begin{document}

\title{An extension of the classification of high rank regular polytopes}

\author{Maria Elisa Fernandes}
\address{
Maria Elisa Fernandes, Center for Research and Development in Mathematics and Applications, Department of Mathematics, University of Aveiro, Portugal
}
\email{maria.elisa@ua.pt}

\author{Dimitri Leemans}
\address{Dimitri Leemans, Department of Mathematics, University of Auckland, Private Bag 92019, Auckland 1142, New Zealand
}
\email{d.leemans@auckland.ac.nz}

\author{Mark Mixer}
\address{Mark Mixer, Department of Applied Mathematics, Wentworth Institute of Technology, Boston, MA 02115, USA
}
\email{mixerm@wit.edu}

\begin{abstract}
Up to isomorphism and duality, there are exactly two non-degenerate abstract regular polytopes of rank greater than $n-3$, one of rank $n-1$ and one of rank $n-2$, with automorphism groups that are transitive permutation groups of degree $n\geq 7$. In this paper we extend this classification of high rank regular polytopes to include the ranks $n-3$ and $n-4$.  The result is, up to a isomorphism and duality, seven abstract regular polytopes of rank $n-3$ for each $n\geq 9$, and nine abstract regular polytopes of rank $n-4$ for each $n \geq 11$. Moreover we show that if a transitive permutation group $\Gamma$ of degree $n \geq 11$ is the automorphism group of an abstract regular polytope of rank at least $n-4$, then $\Gamma\cong S_n$.
\end{abstract}
\maketitle
\noindent \textbf{Keywords:} Abstract Regular Polytopes, String C-Groups, Permutation Groups.

\noindent \textbf{2000 Math Subj. Class:} 52B11, 20D06.

\section{Introduction}
Abstract polytopes are incidence structures generalizing certain discrete geometric objects, such as the Platonic solids. Abstract regular polytopes are those that are richest in symmetry.
The one-to-one correspondence between abstract regular polytopes and a class of groups known as string C-groups, which are themselves quotients of Coxeter groups, has led to many papers which study when abstract regular polytopes have certain groups as their group of automorphisms.

Much of the work in this area has been influenced by databases of examples of polytopes and their automorphism groups.
In 2006, D. Leemans and L. Vauthier published ``An atlas of polytopes for almost simple groups,"~\cite{Lalg} classifying all abstract regular polytopes whose automorphism group is an almost simple group as large as the automorphism group of a simple group with 900,000 elements.
Also in 2006, M. Hartley published ``An atlas of small regular polytopes,"~\cite{Halg} where he classified all regular polytopes with automorphism groups of order at most 2000 (not including orders 1024 and 1536). In~\cite{HHalg},
M. Hartley and A. Hulpke classified all regular polytopes for the sporadic groups as large as the Held group and Leemans and Mixer classified, among others, all regular polytopes for the third Conway group ~\cite{LMAlg}.
More recently, T. Connor, Leemans and Mixer classified all regular polytopes of rank at least four for the O'Nan sporadic simple group~\cite{CLM2014}.

These collections of polytopes helped lead to various theoretical results about abstract regular polytopes with automorphism group $G$ an almost simple group, where $PSL(2,q) \leq G \leq P\Gamma L(2,q)$, or $G$ is one of $PSL(3,q)$, $PGL(3,q)$, $Sz(q)$, $R(q)$, $S_n$, or $A_n$ (see ~\cite{ls09, BV10, Leemans06, KL2010, Leemans:2015, fl, flm, flm2, CDL2013}). Table~\ref{sn} summarizes the results obtained for the symmetric groups $S_n$.  The results for $5\leq n\leq 9$ can be found in~\cite{Lalg}, and the regular polytopes for larger $S_n$ are computed using the algorithms developed in~\cite{LMAlg}.

Julius Whiston showed in~\cite{Whis00} that the maximum size of an independent generating set of a permutation group of degree $n$ is $n-1$.
Thus the rank of an abstract regular polytope whose automorphism group is a permutation group of degree $n$ is at most $n-1$.
Looking at Table~\ref{sn}, Fernandes and Leemans proved in~\cite{fl}
that there is exactly one such polytope up to isomorphism when $n\geq 5$, and
that when $n\geq 7$, there is also a unique polytope of rank $n-2$ up to isomorphism and duality, and that for each rank $3\leq r \leq n-1$, there is at least one abstract regular polytope of rank $r$ whose automorphism group is $S_n$.

\begin{table}
\begin{center}
\begin{tabular}{||c|c|c|c|c|c|c|c|c|c|c|c||}
\hline
$\textbf{G}\backslash \textbf{Rank}$&\textbf{3}&\textbf{4}&\textbf{5}&\textbf{6}&\textbf{7}&\textbf{8}&\textbf{9}&\textbf{10}&\textbf{11}&\textbf{12}&\textbf{13}\\
\hline
$S_5$&4&1&0&0&0&0&0&0&0&0&0\\
$S_6$&2&4&1&0&0&0&0&0&0&0&0\\
$S_7$&35&7&1&1&0&0&0&0&0&0&0\\
$S_8$&68&36&11&1&1&0&0&0&0&0&0\\
$S_9$&129&37&7&7&1&1&0&0&0&0&0\\
$S_{10}$&413 &203 &52 &13 &7 &1 &1 &0&0&0&0\\
$S_{11}$&1221 &189 &43 &25 &9 &7 &1 &1&0&0&0\\
$S_{12}$&3346 &940 &183 &75 &40 &9 &7 &1 &1 &0&0\\
$S_{13}$&7163 &863 &171 &123 &41 &35 &9 &7 &1 &1&0\\
$S_{14}$&23126 &3945 &978 &303 &163 &54 &35 &9 &7&1&1\\
\hline
\end{tabular}
\begin{caption}{Number of polytopes for $S_n$ ($5\leq n \leq 14$). }\label{sn} \end{caption}
\end{center}
\end{table}

In this article, we extend the results  of ~\cite{fl} by giving a classification of rank $r \geq n-4$ string C-groups with connected diagram for transitive groups of degree $n$ with $n$ sufficiently large. More precisely, the main theorem of this article is the following.

\begin{theorem}\label{maintheorem}
Let $1\geq i \geq 4$, and $n\geq 3+2i$ when $r = n-i$.
If $\Gamma$ is a string C-group of rank $r\geq n-i$ with a connected diagram and is isomorphic to a transitive group of degree $n$  then $\Gamma$ or its dual is isomorphic to $S_n$ and the CPR graph is one of those listed in Figure~\ref{7cases}.
\end{theorem}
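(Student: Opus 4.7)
The plan is to treat the four corank values $i\in\{1,2,3,4\}$ in turn, with $i=1$ and $i=2$ already covered by the Fernandes--Leemans classification of ranks $n-1$ and $n-2$ recalled in the introduction. For the remaining cases, I analyse the CPR graph $\mathcal{G}$ of $\Gamma$: its vertex set is $\{1,\dots,n\}$ and each generator $\rho_j$ contributes edges of colour $j$, one per transposition in the cycle decomposition of the involution. Because $\Gamma$ is transitive, the underlying uncoloured multigraph is connected; because the Coxeter diagram is a string, any two generators of non-adjacent colour commute, so edges of colours $j$ and $k$ with $|j-k|>1$ can only share vertices in tightly constrained patterns, which strongly restricts the combinatorial structure.

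The main counting input is that $\mathcal{G}$ has at least $n-1$ edges (by connectivity) distributed among $r=n-i$ generators, so at most $i-1$ of them can act as a product of more than one transposition, with their positions along the string controlled by the commuting relations. For $i=3$ and $i=4$ I would enumerate the possible configurations of single- versus multi-transposition generators and their relative placement, taking duality into account, and then verify the string C-group intersection property $\Gamma_I\cap \Gamma_J=\Gamma_{I\cap J}$ on each surviving candidate by identifying every parabolic subgroup $\Gamma_I$ from the connected components of the CPR subgraph induced by the colours in $I$, and, where convenient, by applying the lower-corank cases to those parabolic subgroups inductively. This enumeration should yield exactly the seven and nine diagrams announced in Figure~\ref{7cases}; the bounds $n\geq 9$ and $n\geq 11$ provide just enough room to realise every listed diagram while ruling out small-degree coincidences.

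Finally, for each surviving diagram I identify $\Gamma$. Every candidate will contain a leaf-type single transposition, and the connectedness of $\mathcal{G}$ combined with the local structure of the neighbouring generators lets one verify primitivity of $\Gamma$, so that Jordan's classical theorem on primitive groups containing a transposition forces $\Gamma\cong S_n$. I expect the main obstacle to be the middle step: the combinatorial explosion of potential CPR graphs at corank $4$, together with the need to verify the intersection property in each case, will require careful bookkeeping and systematic use of the inductive hypothesis on parabolic subgroups to collapse many a priori distinct graphs and eliminate non-C-group candidates.
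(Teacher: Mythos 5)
Your outline has the right general shape (reduce to known results for coranks $1,2$, enumerate CPR graphs for coranks $3,4$, check the intersection property, identify $S_n$), but it contains two genuine gaps that the paper has to work hard to close. First, your counting step runs in the wrong direction. Connectivity of $\mathcal{G}$ forces \emph{at least} $n-1$ edges, so with $r=n-i$ generators you get a \emph{lower} bound of $i-1$ on the number of excess transpositions; it does not follow that ``at most $i-1$'' generators move more than two points, and nothing in your sketch bounds the number of edges from above. Without such an upper bound the proposed enumeration of configurations is not finite. The paper obtains the needed upper bounds by introducing \emph{fracture graphs}: choosing one $j$-edge per label joining distinct $\Gamma_j$-orbits yields a forest with exactly $n-r$ components, and a sequence of lemmas (no vertex of degree $\ge 4$, constraints from alternating squares forced by the commuting property, etc.) shows that any extra edge creates too many components. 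This is the mechanism that collapses the search space to the $38$ families of Table~\ref{tab:allpossi}, and it is absent from your plan.

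Second, the fracture-graph construction (and indeed any argument that reads off parabolic subgroups from orbit decompositions) only makes sense when every maximal parabolic subgroup $\Gamma_j$ is \emph{intransitive}; you never address the alternative. The paper disposes of the transitive case by invoking the theorem of Cameron, Fernandes, Leemans and Mixer that a transitive string C-group of degree $m$ other than $S_m$ or $A_m$ has rank at most $m/2+1$ up to a short explicit list of exceptions, which is incompatible with a parabolic of rank $r-1\ge n-5$ inside a group of degree $n\ge 11$. This external input is essential and cannot be replaced by the combinatorics you describe. A smaller issue: your final identification of $\Gamma$ via Jordan's theorem requires establishing primitivity for each surviving diagram, which is additional unproved work; the paper instead argues directly from transitivity together with the presence of a transposition among the generators of the specific graphs obtained.
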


\begin{small}
\begin{figure}[h]
\label{7cases}
\begin{tabular}{cc}
\begin{tabular}{c}
\begin{tabular}{|c|}
\hline
$ \xymatrix@-1pc{ *+[o][F]{}    \ar@{-}[r]^0    &  *+[o][F]{}   \ar@{-}[r]^1  & *+[o][F]{}  \ar@{-}[r]^2 & *+[o][F]{}  \ar@{-}[r]^3 & *+[o][F]{} \ar@{.}[r] &  *+[o][F]{}  \ar@{-}[r]^{n-2} & *+[o][F]{}}$\\
\hline
\end{tabular}\\

\\

\begin{tabular}{|c|}
\hline
$ \xymatrix@-1pc{ *+[o][F]{}   \ar@{-}[r]^1  & *+[o][F]{}  \ar@{-}[r]^0    &  *+[o][F]{}   \ar@{-}[r]^1  & *+[o][F]{}  \ar@{-}[r]^2 & *+[o][F]{}  \ar@{-}[r]^3 & *+[o][F]{} \ar@{.}[r] &   *+[o][F]{}  \ar@{-}[r]^{n-4} & *+[o][F]{}  \ar@{-}[r]^{n-3} & *+[o][F]{} }$\\
\hline
\end{tabular}\\
\\
\begin{tabular}{|c|}
\hline
$ \xymatrix@-1pc{
     *+[o][F]{}   \ar@{-}[r]^2  \ar@{-}[d]_0     & *+[o][F]{}  \ar@{-}[r]^1 \ar@{-}[d]^0  & *+[o][F]{}  \ar@{-}[r]^2 & *+[o][F]{}  \ar@{-}[r]^3 & *+[o][F]{}  \ar@{-}[r]^4 & *+[o][F]{}  \ar@{..}[r] & *+[o][F]{}  \ar@{-}[r]^{n-4}& *+[o][F]{}  \\
     *+[o][F]{}  \ar@{-}[r]_2     & *+[o][F]{}    &  & & & & &  }$
\\
$ \xymatrix@-1pc{
     *+[o][F]{}   \ar@{-}[r]^0  \ar@{=}[d]_1^2     & *+[o][F]{}  \ar@{-}[r]^1 \ar@{-}[d]^2  & *+[o][F]{} \ar@{-}[r]^2 & *+[o][F]{}  \ar@{-}[r]^3 & *+[o][F]{}  \ar@{-}[r]^4 & *+[o][F]{}  \ar@{..}[r] & *+[o][F]{}  \ar@{-}[r]^{n-4}& *+[o][F]{}  \\
     *+[o][F]{}  \ar@{-}[r]_0     & *+[o][F]{}    &  & & & & &  }$
     \\
$ \xymatrix@-1pc{ *+[o][F]{}   \ar@{-}[r]^1  & *+[o][F]{}  \ar@{-}[r]^0    &  *+[o][F]{}   \ar@{-}[r]^1  & *+[o][F]{}  \ar@{-}[r]^2 & *+[o][F]{}  \ar@{-}[r]^3 & *+[o][F]{} \ar@{.}[r] &  *+[o][F]{}  \ar@{-}[r]^{n-6} & *+[o][F]{}  \ar@{-}[r]^{n-5} & *+[o][F]{}  \ar@{-}[r]^{n-4} & *+[o][F]{}  \ar@{-}[r]^{n-5} & *+[o][F]{}}$
\\
 $ \xymatrix@-1pc{ *+[o][F]{}   \ar@{-}[r]^2  & *+[o][F]{}  \ar@{-}[r]^1    &  *+[o][F]{}   \ar@{-}[r]^0  & *+[o][F]{}  \ar@{-}[r]^1 & *+[o][F]{}  \ar@{-}[r]^2 & *+[o][F]{}  \ar@{-}[r]^3  & *+[o][F]{}  \ar@{-}[r]^4  &*+[o][F]{} \ar@{.}[r] &  *+[o][F]{}  \ar@{-}[r]^{n-5} & *+[o][F]{}  \ar@{-}[r]^{n-4} & *+[o][F]{}  }$
 \\
$ \xymatrix@-1pc{ *+[o][F]{}   \ar@{-}[r]^0  & *+[o][F]{} \ar@{-}[r]^1  &  *+[o][F]{} \ar@{-}[r]^0 & *+[o][F]{} \ar@{-}[r]^1& *+[o][F]{} \ar@{-}[r]^2  & *+[o][F]{}  \ar@{-}[r]^3  & *+[o][F]{}  \ar@{-}[r]^4 & *+[o][F]{} \ar@{.}[r] & *+[o][F]{}  \ar@{-}[r]^{n-5} & *+[o][F]{}  \ar@{-}[r]^{n-4} & *+[o][F]{}}$
\\
 $ \xymatrix@-1pc{ *+[o][F]{}   \ar@{-}[r]^0  & *+[o][F]{} \ar@{-}[r]^1  &  *+[o][F]{} \ar@{=}[r]^0_2 & *+[o][F]{} \ar@{-}[r]^1& *+[o][F]{} \ar@{-}[r]^2  & *+[o][F]{}  \ar@{-}[r]^3  & *+[o][F]{}  \ar@{-}[r]^4 & *+[o][F]{} \ar@{.}[r] & *+[o][F]{}  \ar@{-}[r]^{n-5} & *+[o][F]{}  \ar@{-}[r]^{n-4} & *+[o][F]{}}$
 \\
$ \xymatrix@-1pc{ *+[o][F]{}   \ar@{=}[r]^2_0  & *+[o][F]{} \ar@{-}[r]^1  &  *+[o][F]{} \ar@{-}[r]^0 & *+[o][F]{} \ar@{-}[r]^1& *+[o][F]{} \ar@{-}[r]^2  & *+[o][F]{}  \ar@{-}[r]^3  & *+[o][F]{}  \ar@{-}[r]^4 & *+[o][F]{} \ar@{.}[r] & *+[o][F]{}  \ar@{-}[r]^{n-5} & *+[o][F]{}  \ar@{-}[r]^{n-4} & *+[o][F]{}}$  \\
\hline
\end{tabular}
\end{tabular}
&
\begin{tabular}{|c|}
\hline
$\xymatrix@-1pc{
         *+[o][F]{}   \ar@{-}[d]_3  \ar@{-}[r]^0     & *+[o][F]{}  \ar@{-}[d]^3  \ar@{-}[r]^1  & *+[o][F]{} \ar@{-}[d]^3 \ar@{-}[r]^2 & *+[o][F]{} \ar@{-}[r]^3 & *+[o][F]{} \ar@{-}[r]^4& *+[o][F]{} \ar@{.}[r] &*+[o][F]{} \ar@{-}[r]^{n-5}&*+[o][F]{}\\
      *+[o][F]{}  \ar@{-}[r]_0     & *+[o][F]{}  \ar@{-}[r]_1   &  *+[o][F]{}&  &&&&}$
\\
$\xymatrix@-1pc{
      *+[o][F]{}   \ar@{-}[d]_2  \ar@{-}[r]^0     & *+[o][F]{}  \ar@{-}[d]^2  \ar@{-}[r]^1  & *+[o][F]{}\ar@{-}[r]^2 & *+[o][F]{}\ar@{-}[r]^3 & *+[o][F]{}\ar@{.}[r]& *+[o][F]{}\ar@{-}[r]^{n-5}&*+[o][F]{}\ar@{-}[r]^{n-6} &*+[o][F]{}\\
      *+[o][F]{}  \ar@{-}[r]_0     & *+[o][F]{}    &   &&&&&&&}$
\\
$\xymatrix@-1pc{
     *+[o][F]{}   \ar@{-}[r]^0  \ar@{=}[d]_2^1     & *+[o][F]{}  \ar@{-}[r]^1 \ar@{-}[d]^2 & *+[o][F]{}  \ar@{.}[r] &*+[o][F]{}  \ar@{-}[r]^{n-5} &*+[o][F]{}  \ar@{-}[r]^{n-6} &*+[o][F]{}\\
     *+[o][F]{}  \ar@{-}[r]_0     & *+[o][F]{}    & &&&   }$
\\
$\xymatrix@-1pc{ *+[o][F]{}   \ar@{-}[r]^0 & *+[o][F]{} \ar@{-}[r]^1  &  *+[o][F]{} \ar@{-}[r]^0 & *+[o][F]{} \ar@{.}[r] & *+[o][F]{}  \ar@{-}[r]^{n-5}& *+[o][F]{}  \ar@{-}[r]^{n-6}  & *+[o][F]{}}$
\\
$\xymatrix@-1pc{ *+[o][F]{}   \ar@{-}[r]^2  & *+[o][F]{} \ar@{-}[r]^1  &  *+[o][F]{} \ar@{-}[r]^0 & *+[o][F]{} \ar@{.}[r] & *+[o][F]{}  \ar@{-}[r]^{n-5}& *+[o][F]{}  \ar@{-}[r]^{n-6}  & *+[o][F]{}}$
\\
$\xymatrix@-1pc{ *+[o][F]{}   \ar@{=}[r]^0_2  & *+[o][F]{} \ar@{-}[r]^1  &  *+[o][F]{} \ar@{-}[r]^0 & *+[o][F]{} \ar@{.}[r] & *+[o][F]{}  \ar@{-}[r]^{n-5}& *+[o][F]{}  \ar@{-}[r]^{n-6}  & *+[o][F]{}}$
\\
$\xymatrix@-1pc{ *+[o][F]{}  \ar@{-}[r]^0& *+[o][F]{}  \ar@{-}[r]^{1} & *+[o][F]{}  \ar@{=}[r]^{2}_0 & *+[o][F]{}  \ar@{-}[r]^{1}  & *+[o][F]{} \ar@{.}[r]& *+[o][F]{}  \ar@{-}[r]^{n-5}  & *+[o][F]{}  \ar@{-}[r]^{n-6} & *+[o][F]{}}$
\\
$\xymatrix@-1pc{ *+[o][F]{}   \ar@{-}[r]^1  &*+[o][F]{}   \ar@{-}[r]^0  & *+[o][F]{} \ar@{-}[r]^1  &  *+[o][F]{} \ar@{-}[r]^0  & *+[o][F]{} \ar@{.}[r] &  *+[o][F]{}  \ar@{-}[r]^{n-5} & *+[o][F]{}}$
\\
$\xymatrix@-1pc{ *+[o][F]{}   \ar@{-}[r]^3 &*+[o][F]{}   \ar@{-}[r]^2  & *+[o][F]{} \ar@{-}[r]^1  &  *+[o][F]{} \ar@{-}[r]^0& *+[o][F]{}\ar@{.}[r] & *+[o][F]{}  \ar@{-}[r]^{n-5} & *+[o][F]{}}$
\\
\hline
\end{tabular}
\end{tabular}
\caption{CPR graphs for  string C-groups of rank $r\geq n-4$}
\end{figure}
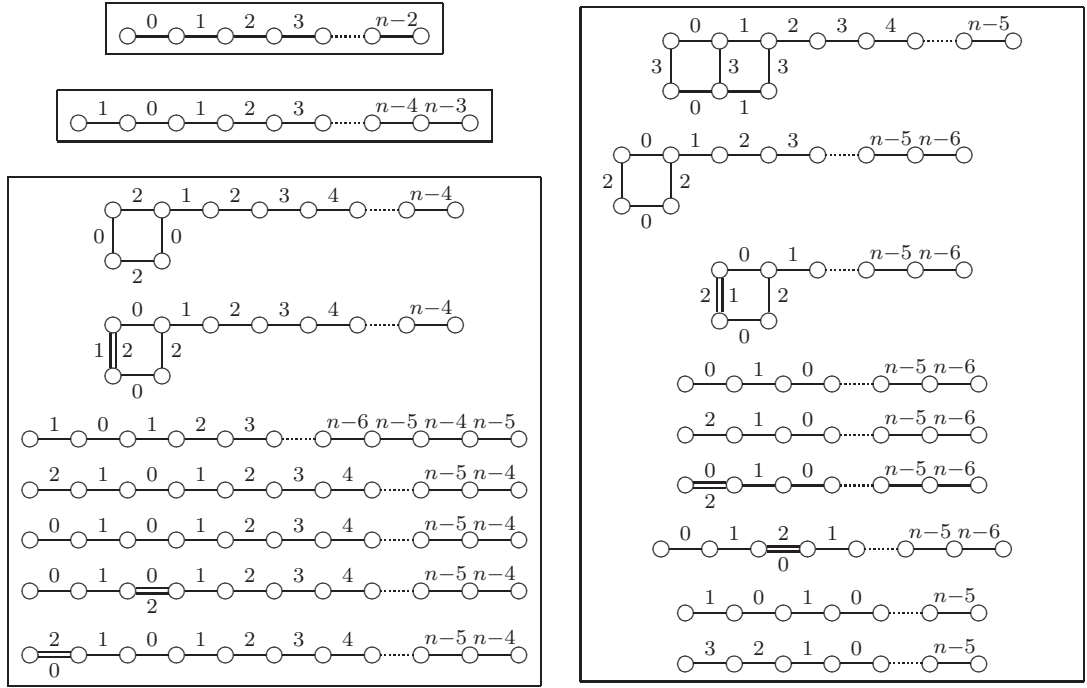
\end{small}

The cases where $r=n-1$ and $r=n-2$ were already dealt with in~\cite{fl}.
Here we improve the techniques used in ~\cite{fl} in the case where all maximal parabolic subgroups of $\Gamma$ are intransitive and we use the main theorem of \cite{CFLM2014} in the case where a maximal parabolic subgroup of $\Gamma$ is transitive. This makes cases $r=n-1$ and $r=n-2$ much easier to prove and it is the reason why we decide to include these cases in the present article as well.
We conjecture that similar results could be obtained for ranks $n-k$ with $4<k < \frac{n}{2}$ for $n$ sufficiently large. 

The article is organized as follows.
In Section~\ref{prelim}, we give the definitions and notation needed to understand this article.
In Section~\ref{sectionintrans}, we use {\em fracture graphs} to classify the possible permutation representation graphs of a rank $n-3$ or $n-4$ string group generated by involutions with only intransitive maximal parabolic subgroups.
In Section~\ref{section6}, we prove that exactly seven of these possibilities yield a rank $n-3$ string C-group and exactly nine yield a rank $n-4$ string C-group.
Finally, in Section~\ref{sectiontrans}, we show that only the cases appearing in Section~\ref{section6} may occur by proving that, under the hypotheses of Theorem~\ref{maintheorem}, all maximal parabolic subgroups must be intransitive.

\section{Preliminares}\label{prelim}
\subsection{String C-groups}
A \emph{Coxeter group} $\Gamma := (G,S)$ of rank $r$ is a group $G$ with a set $S$ of distinguished generators $\rho_0,\ldots,\rho_{r-1}$ and presentation
$$\langle \rho_i \ | \ (\rho_i\rho_j)^{m_{i,j}}=\varepsilon\mbox{ for all }i,\,j\in\{0,\ldots,r-1\} \rangle$$
where $\varepsilon$ is the identity element of $G$ and each $m_{i,j}$ is a positive integer or infinity, $m_{i,i}=1$, and $m_{i,j}>1$ for $i\neq j$.  It follows from the definition, that a Coxeter group satisfies the next condition which is called the \emph{intersection property}.
\[\forall J, K \subseteq \{0,\ldots,r-1\}, \langle \rho_j \mid j \in J\rangle \cap \langle \rho_k \mid k \in K\rangle = \langle \rho_j \mid j \in J\cap K\rangle\]
A Coxeter group $\Gamma$ can be represented by a \emph{Coxeter diagram} $\mathcal{D}$. This Coxeter diagram $\mathcal{D}$ is a labelled graph which represents the set of relations of $G$. More precisely, the vertices of the graph correspond to the generators $\rho_i$ of $G$, and for each $i$ and $j$, an edge with label $m_{i,j}$ joins the $i$th and the $j$th vertices; conventionally, edges of label 2 are omitted. By a \emph{string diagram} we mean a Coxeter diagram with each connected component linear. A Coxeter group with a string diagram is called a \emph{string Coxeter group}.

More generally, a \emph{string group generated by involutions}, or \emph{sggi} for short, is defined as a group generated by pairwise distinct involutions $\rho_0,\ldots,\rho_{r-1}$ that satisfy the following property, called the \emph{commuting property}.
\[|i-j|>1\Rightarrow (\rho_i\rho_j)^2=1\]
Finally, a {\em string C-group} $\Gamma$ is an sggi satisfying the intersection property. In this case the underlying diagram for $\Gamma$ is a string diagram.
The \emph{(Schl\"afli) type} of $\Gamma$ is $\{p_1,\,\ldots,\,p_{r-1}\}$ where $p_i$ is the order of $\rho_{i-1}\rho_i,\,i\in\{1,\ldots,r-1\}$,
and the \emph{rank} of a string C-group is the size of its set of generators.
The \emph{dual} of a sggi is obtained by reversing the order of the generators; clearly the dual of a string C-group is itself a string C-group.

In~\cite{MS2002}, it was shown that string C-groups and abstract regular polytopes are in one-to-one correspondence.

Let $\Gamma=\langle \rho_0,\rho_1,\ldots,\rho_{r-1}\rangle$ be a sggi.
We denote by $\Gamma_j$ for $0\leq j\leq r-1$ the group generated by $\{\rho_i\,\,|\, i\neq j\}$,  $\Gamma_{<j}$ the group generated by $\{\rho_i\,\,|\, i< j\}$, $\Gamma_{>j}$ the group generated by $\{\rho_i\,\,|\, i>j\}$, and we write $\Gamma_{j_1,\ldots, j_k}$ with $\{j_1,\ldots, j_k\}\subset \{0,\ldots,r-1\}$, to denote the group generated by $\{\rho_i\,\,|\, i\neq j_1,\ldots, j_k\}$. The subgroups $\Gamma_i$ with $0\leq i\leq r-1$ are called the {\em maximal parabolic subgroups}.

Let $\Gamma=\langle\rho_0,\,\ldots,\,\rho_{r-1}\rangle$ be a sggi acting as a permutation group on a set $\{1,\,\ldots,\,n\}$.
We define the {\it permutation representation graph} $\mathcal{G}$ as the $r$-edge-labeled multigraph with $n$ vertices and with a single $i$-edge $\{a,\,b\}$ whenever $a\rho_i=b$ with $a\neq b$.  Note that each of the generators is an involution, thus the edges in our graph are not directed.  When $\Gamma$ is a string C-group, the multi-graph $\mathcal{G}$ is called a \emph{CPR graph}, as defined in \cite{pcpr}.

Let $\Gamma = \langle \rho_0,\ldots,\rho_{r-1} \rangle$, and let $\tau$ be an involution such that $\tau \not \in \Gamma$ and $\tau$ commutes with all of $\Gamma$.  We call the group $\Gamma^*= \langle \rho_i \tau^{\eta_i}\,|\, i\in \{0,\,\ldots,\,r-1\} \rangle$ where $\eta_i = 1$ if $i=k$ and 0 otherwise, the {\em sesqui-extension} of $\Gamma$ with respect to $\rho_k$ and $\tau$ (see \cite{flm}).

\section{Intransitive maximal parabolic subgroups}\label{sectionintrans}
We start this section with the definition of fracture graphs. These graphs will play a central rule in the case where all maximal parabolic subgroups of $\Gamma$ are intransitive.
\begin{definition}\label{def1}
Let $\mathcal{G}$ be a permutation representation graph of $\Gamma$.
A \emph{fracture graph} $\mathcal{F}$ of $\Gamma$ is a subgraph of $\mathcal{G}$ containing all vertices of $\mathcal{G}$ and  one edge of each label, chosen in such a way that each $i$-edge joins two vertices $a_i$ and $b_i$ that are in distinct $\Gamma_i$-orbits.
\end{definition}

Observe that fracture graphs have exactly $r$ edges and are well defined as long as every $\Gamma_i$ is intransitive.

Throughout the remainder of this section, $\Gamma$  denotes a transitive permutation group of degree $n$ that is a string C-group of rank $r$ with all maximal parabolic subgroups of $\Gamma$ intransitive; $\mathcal{G}$ is the CPR-graph of $\Gamma$ and $\mathcal{F}$ is a fracture graph of $\Gamma$.

In the following figures, we represent the edges of  $\mathcal{G}$ that are not in $\mathcal{F}$ by dashed edges.
We use dotted lines  between two edges with respective labels $i$ and $j$ to represent edges with consecutive labels $i+1, \ldots, j-1$ that belong to $\mathcal{F}$.

\begin{lemma}\label{fracture}
\begin{enumerate}
\item $\mathcal{F}$  has no cycles;
\item $\mathcal{F}$ has $c$ connected components if and only if $r = n - c$;
\item If there exist two edges $\{a,b\}$ with distinct labels $i$ and $j$ in $\mathcal{G}$, then $a$ and $b$ are in distinct connected components of $\mathcal{F}$;
\item If there exist two $i$-edges $\{a, b\}$ and $\{c, d\}$ in $\mathcal{G}$, then not all vertices $a,b,c,d$ are in a same connected component of $\mathcal{F}$.
\end{enumerate}
\end{lemma}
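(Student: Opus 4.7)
My plan is to handle the four parts in order: (1) and (2) are combinatorial consequences of the definition once cycle-freeness is established, and (3), (4) both reduce to locating a path in $\mathcal{F}$ whose labels exhibit the endpoints of an $i$-edge of $\mathcal{F}$ as lying in a common $\Gamma_i$-orbit.

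For (1), I would argue by contradiction. A cycle in $\mathcal{F}$ uses pairwise distinct labels $i_1,\ldots,i_k$, since $\mathcal{F}$ contains at most one edge per label. Deleting the $i_1$-edge $\{u,v\}$ from the cycle leaves a $u$-to-$v$ path whose labels all lie in $\{0,\ldots,r-1\}\setminus\{i_1\}$, so $u$ and $v$ lie in a common $\Gamma_{i_1}$-orbit, contradicting the defining property of $\mathcal{F}$. The same argument excludes multi-edges. Part (2) then follows from the standard counting formula: a forest on $n$ vertices with $r$ edges has exactly $n-r$ connected components.

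For (3), suppose $\{a,b\}$ carries both an $i$-edge and a $j$-edge in $\mathcal{G}$ and that $a,b$ lie in a common component of $\mathcal{F}$. Let $P$ be the unique tree-path from $a$ to $b$ in $\mathcal{F}$, and let $L$ be its set of labels. If $i\in L$, then $P$ traverses the $i$-edge $\{c,d\}$ of $\mathcal{F}$; splicing the two sub-paths of $P$ on either side of $\{c,d\}$ with the $j$-edge $\{a,b\}$ gives a walk from $c$ to $d$ whose labels all avoid $i$, forcing $c$ and $d$ into a common $\Gamma_i$-orbit and contradicting the fracture property of $\{c,d\}$. The case $j\in L$ is symmetric. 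The residual case $i,j\notin L$ is where I expect the main obstacle to lie: the path $P$ already places $a$ and $b$ in a common $\Gamma_i$-orbit and in a common $\Gamma_j$-orbit, and I would aim to derive a contradiction by a tree-swap argument, attempting to exchange the $i$-edge of $\mathcal{F}$ for $\{a,b\}$ (or to reroute $P$ through it) and using the resulting cycle in $\mathcal{F}\cup\{a,b\}$ together with the existence of the second multi-edge of label $j$ to force some $\Gamma_l$-orbit condition of the fracture graph to break.

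Part (4) follows the same template. Assuming distinct $i$-edges $\{a,b\}$ and $\{c,d\}$ with all four vertices in one component of $\mathcal{F}$, I would examine the unique tree-paths in $\mathcal{F}$ joining $a$ to $c$ and $b$ to $d$. If either path avoids label $i$, then combining it with the two $i$-edges $\{a,b\}$ and $\{c,d\}$ yields a walk whose labels avoid $i$ and whose endpoints are the endpoints of some $i$-edge of $\mathcal{G}$; a suitable choice forces this $i$-edge to be the one in $\mathcal{F}$, a contradiction. The sub-case in which both paths are forced to traverse the unique $i$-edge of $\mathcal{F}$ again demands a swap argument analogous to that in (3), substituting one of the two given $i$-edges for the $i$-edge of $\mathcal{F}$ and checking that the resulting candidate fracture graph cannot satisfy the defining property. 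The two residual sub-cases in (3) and (4) are the steps I anticipate being the most delicate.
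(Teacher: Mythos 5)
Parts (1) and (2) are fine and match the paper. The problem is in (3) and (4), where you defer the ``residual'' sub-cases to an unexecuted ``tree-swap argument''; those sub-cases are exactly where your proof stops being a proof, and in fact no swap is needed anywhere. In (3), the case $i,j\notin L$ is not the delicate one --- it is the easiest. Take \emph{any} edge $\{c,d\}$ of $P$, say with label $l$. Since $\mathcal{F}$ contains exactly one edge of each label, every other edge of $P$ has label $\neq l$; and since at least one of $i,j$ differs from $l$ (indeed both do in your residual case), replacing $\{c,d\}$ by the detour through the corresponding $\{a,b\}$ multi-edge gives a walk from $c$ to $d$ all of whose labels avoid $l$. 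Hence $c$ and $d$ lie in the same $\Gamma_l$-orbit, contradicting the fact that $\{c,d\}$ is the $l$-edge of $\mathcal{F}$. This single splice, applied to an arbitrary edge of $P$, disposes of all three of your cases at once; it is precisely the paper's one-line argument.

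For (4) your reduction is off track. You propose to combine the tree-paths from $a$ to $c$ and from $b$ to $d$ with the two $i$-edges to get ``a walk whose labels avoid $i$'' between the endpoints of an $i$-edge; but the two $i$-edges you are splicing in carry label $i$ themselves, so the resulting walk does not avoid $i$, and the endpoints $a,c$ (or $b,d$) of your paths are not endpoints of an $i$-edge of $\mathcal{G}$ in any case. The correct (and much shorter) route, which is the paper's: since $\mathcal{F}$ has only one $i$-edge, at least one of $\{a,b\},\{c,d\}$ is not in $\mathcal{F}$, say $\{a,b\}$. If all four vertices lay in one component, there would be an $\mathcal{F}$-path $P$ from $a$ to $b$; $P$ cannot be the single edge $\{a,b\}$ (that edge is not in $\mathcal{F}$, and it is the unique $i$-edge of $\mathcal{G}$ at $a$), so $P$ contains an edge $\{e,f\}$ of some label $l\neq i$. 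Splicing the $i$-edge $\{a,b\}$ in for $\{e,f\}$ gives an $l$-avoiding walk from $e$ to $f$, the same contradiction as in (3). The second $i$-edge $\{c,d\}$ plays no role beyond guaranteeing that one of the two is outside $\mathcal{F}$. As written, your proposal leaves the decisive steps of (3) and (4) as intentions rather than arguments, so it does not yet constitute a proof.
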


\begin{proof}
\begin{enumerate}
\item Let $\{a, b\}$ be the $i$-edge of $\mathcal{F}$. Suppose that this edge is in a cycle of $\mathcal{F}$. Then $a$ and $b$ are in the same $\Gamma_i$-orbit, a contradiction with Definition~\ref{def1}.
\item This is a consequence of (1), which shows that $\mathcal{F}$ is a forest.
\item Suppose on the contrary that $a$ and $b$ are in a same connected component of $\mathcal{F}$. Then there exists a path in $\mathcal{F}$ from $a$ to $b$. Let $\{c,d\}$ be an $l$-edge of this path. The vertices $c$ and $d$ are then necessarily in the same $\Gamma_l$-orbit, a contradiction with Definition~\ref{def1}.
\item
At least one of the $i$-edges, say $\{a,b\}$, of $\mathcal{G}$ is not in $\mathcal{F}$.
Suppose that $a, b, c, d$ are all in the same connected component of $\mathcal{F}$. Then there is a path in $\mathcal{F}$ from $a$ to $b$. If $\{e,f\}$ is an $l$-edge on this path, then the vertices $e$ and $f$ are in the same $\Gamma_l$-orbit, a contradiction as before. Thus not all vertices $a,b,c,d$ are in the same connected component of $\mathcal{F}$.
\end{enumerate}
\end{proof}

\begin{lemma}\label{ic}
If a cycle $C$ of $\mathcal{G}$ contains the $i$-edge of $\mathcal{F}$, then $C$ contains another $i$-edge.
\end{lemma}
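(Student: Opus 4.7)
The plan is to argue by contradiction. Assume that $C$ is a cycle of $\mathcal{G}$ containing the $i$-edge $\{a,b\}$ of $\mathcal{F}$ and no other $i$-edge. Since $\{a,b\}$ lies in $\mathcal{F}$, Definition~\ref{def1} forces $a$ and $b$ to lie in distinct $\Gamma_i$-orbits, so my goal reduces to deriving that, on the contrary, $a$ and $b$ must lie in the same $\Gamma_i$-orbit.

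Removing the single edge $\{a,b\}$ from the cycle $C$ leaves a path $P$ in $\mathcal{G}$ from $a$ to $b$. By assumption, no edge of $P$ carries the label $i$; every label appearing along $P$ belongs to $\{0,\ldots,r-1\}\setminus\{i\}$. Reading the labels of the edges of $P$ in order as $j_1,j_2,\ldots,j_t$, the product $w=\rho_{j_1}\rho_{j_2}\cdots\rho_{j_t}$ is an element of $\Gamma_i=\langle \rho_j \mid j\neq i\rangle$ that maps $a$ to $b$, by the standard interpretation of the permutation representation graph (a $j$-edge between $x$ and $y$ records that $x\rho_j=y$). Consequently $a$ and $b$ lie in the same $\Gamma_i$-orbit, contradicting the choice of the $i$-edge of $\mathcal{F}$.

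I do not expect any serious obstacle here: the only subtlety is to make sure the path obtained from $C$ by deleting $\{a,b\}$ really does yield a product of generators moving $a$ to $b$, which is immediate from the definition of $\mathcal{G}$ as the permutation representation graph. The argument uses only the defining property of a fracture graph and does not rely on the intersection property or on the rank being $n-3$ or $n-4$, so the statement will be available to us in full generality throughout the subsequent analysis.
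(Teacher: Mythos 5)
Your proof is correct and is essentially the paper's own argument: the paper likewise observes that if a cycle contains exactly one $i$-edge $\{a,b\}$, then $a$ and $b$ lie in the same $\Gamma_i$-orbit (your path argument makes this explicit), so that edge cannot belong to $\mathcal{F}$ by Definition~\ref{def1}. No gaps; your extra detail about reading off the word in $\Gamma_i$ along the path is just an expansion of what the paper leaves implicit.
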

\begin{proof}
Suppose that a cycle of $\mathcal{G}$ contains exactly one $i$-edge $e=\{a,b\}$, then $a$ and $b$ are in the same $\Gamma_i$-orbit and therefore $e$ is not in $\mathcal{F}$ by Definition \ref{def1}.
\end{proof}

\begin{lemma}\label{swap}
If there is a cycle $C$ in $\mathcal{G}$  containing exactly two $i$-edges $e_1, e_2$, such that $e_1$ is in $\mathcal{F}$, then there is another  fracture graph $\mathcal{F}'$ obtained by removing $e_1$ and adding $e_2$.
\end{lemma}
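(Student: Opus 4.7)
The plan is to verify directly that $\mathcal{F}':= (\mathcal{F}\setminus\{e_1\})\cup\{e_2\}$ meets the three requirements of Definition~\ref{def1}. Two of them are immediate: $\mathcal{F}'$ still carries every vertex of $\mathcal{G}$, and since we traded one $i$-edge for another $i$-edge, it still contains exactly one edge of each label $0,\ldots,r-1$. For every $j\neq i$, the $j$-edge of $\mathcal{F}'$ is inherited from $\mathcal{F}$, and so it already joins vertices in distinct $\Gamma_j$-orbits. Hence the only real content is to check that the endpoints of the new $i$-edge $e_2$ lie in distinct $\Gamma_i$-orbits.

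For that step I would exploit the cycle $C$. Write $e_1=\{a,b\}$ and $e_2=\{c,d\}$. Removing $e_1$ and $e_2$ from $C$ leaves two edge-disjoint walks whose edges carry only labels from $\{0,\ldots,r-1\}\setminus\{i\}$, because by hypothesis these are the only two $i$-edges of $C$. After a relabelling if necessary, one walk joins $a$ to $c$ and the other joins $b$ to $d$. Reading such a walk as a product of generators $\rho_j$ with $j\neq i$ exhibits an element of $\Gamma_i=\langle \rho_j\mid j\neq i\rangle$ that maps $a$ to $c$, so $a$ and $c$ lie in the same $\Gamma_i$-orbit; the same argument places $b$ and $d$ in a common $\Gamma_i$-orbit. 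Because $e_1\in\mathcal{F}$ forces $a$ and $b$ into distinct $\Gamma_i$-orbits, it follows that $c$ and $d$ are in distinct $\Gamma_i$-orbits, which is exactly the condition required for $e_2$ in $\mathcal{F}'$.

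The only mildly delicate point I foresee is the degenerate possibility that the two endpoint sets overlap (for example $a=d$), so that one of the walks obtained by deleting $e_1,e_2$ collapses to a single vertex. This causes no difficulty, since a vertex lies trivially in its own $\Gamma_i$-orbit, and the orbit-equality argument still goes through on the nontrivial walk. With all three requirements verified, $\mathcal{F}'$ is again a fracture graph of $\Gamma$, completing the proof.
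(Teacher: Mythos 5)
Your proof is correct and rests on the same idea as the paper's: the arcs of $C$ avoiding label $i$ witness $\Gamma_i$-orbit equivalence, so the orbit separation enjoyed by the endpoints of $e_1$ transfers to the endpoints of $e_2$. The paper packages this as a short contradiction via Lemma~\ref{ic} (if $e_2$'s endpoints were in one $\Gamma_i$-orbit, $e_1$ would be the unique $i$-edge of a cycle), whereas you argue directly along the two arcs; the content is the same, and your worry about a degenerate shared endpoint is moot since distinct $i$-edges are automatically vertex-disjoint because $\rho_i$ is a permutation.
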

\begin{proof}
Suppose that $e_2=\{a,b\}$ with $a$ and $b$ in the same $\Gamma_i$-orbit. Then $e_1$ is the unique $i$-edge in a cycle of $\mathcal{G}$, contradicting Lemma~\ref{ic}. Thus replacing $e_1$ by $e_2$ we obtain another fracture graph.
\end{proof}

\begin{lemma}\label{key}
If $a$ and $b$ are vertices in the same connected component of $\mathcal{F}$, and  $e=\{a,b\}$ is an $i$-edge in $\mathcal{G}$, then $e$ is in $\mathcal{F}$.
\end{lemma}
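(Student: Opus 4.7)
The plan is to argue by contradiction. Suppose that $e=\{a,b\}$ is an $i$-edge of $\mathcal{G}$ with $a$ and $b$ in the same connected component of $\mathcal{F}$, but $e\notin \mathcal{F}$, and let $e'=\{c,d\}$ be the unique $i$-edge of $\mathcal{F}$. I would first apply Lemma~\ref{fracture}(4) to the two $i$-edges $e$ and $e'$ of $\mathcal{G}$: not all of $a,b,c,d$ can lie in a single component of $\mathcal{F}$. Since $c$ and $d$ are joined by $e'\in\mathcal{F}$, they already share a component, so that component must differ from the one containing $a$ and $b$. In particular, the unique $\mathcal{F}$-path $P$ from $a$ to $b$ does not pass through $e'$, and because $e'$ is the only $i$-edge in $\mathcal{F}$, no edge on $P$ carries label $i$.

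The plan is then to extract a contradiction from any single edge $\{u,v\}$ lying on $P$, say with label $l$ (necessarily $l\neq i$). The key observation is that $\{u,v\}$ is the unique $l$-edge of $\mathcal{F}$, so the subpath of $P$ from $a$ to $u$ and the subpath from $v$ to $b$ both avoid label $l$; their edges are realised by generators $\rho_m$ with $m\neq l$, each of which lies in $\Gamma_l$. Hence $a$ and $u$ lie in the same $\Gamma_l$-orbit, and likewise $v$ and $b$. Finally, $a$ and $b$ themselves lie in the same $\Gamma_l$-orbit because $b=a\rho_i$ and $\rho_i\in\Gamma_l$ (as $i\neq l$). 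Chaining these three equivalences yields $u$ and $v$ in a single $\Gamma_l$-orbit, directly contradicting the defining property of the fracture graph that its $l$-edge joins two distinct $\Gamma_l$-orbits.

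The main obstacle I expect is notational rather than conceptual: one has to record carefully that $\mathcal{F}$ contains exactly one edge of each label, so that the subpath of $P$ from $a$ to $u$ and that from $v$ to $b$ both genuinely avoid $\rho_l$. Once this is observed, the argument is essentially the same orbit-chaining trick already used inside the proofs of parts (3) and (4) of Lemma~\ref{fracture}, now adapted to an intermediate edge of $P$ rather than to two parallel edges between $a$ and $b$. The small edge case $|P|=1$ also falls out of the same argument: taking $\{u,v\}=\{a,b\}$ makes both subpaths trivial and the conclusion that $a$ and $b$ lie in the same $\Gamma_l$-orbit is immediate, again contradicting the fracture-graph property.
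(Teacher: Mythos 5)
Your argument is correct and is in essence the paper's: the paper simply closes the path $P$ with the edge $e$ into a cycle and applies Lemma~\ref{ic} (whose proof is exactly your orbit-chaining step) to a $j$-edge of $P$ with $j\neq i$, which is then the unique $j$-edge of that cycle. Your preliminary detour through Lemma~\ref{fracture}(4) to exclude label $i$ from $P$ is harmless but unnecessary, since for any edge of $P$ with label $l\neq i$ the chaining already succeeds because every other edge of the cycle $P\cup\{e\}$ carries a label different from $l$.
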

\begin{proof}
As $a$ and $b$ are vertices in the same connected component of $\mathcal{F}$, there exists a path $P$ in  $\mathcal{F}$ connecting $a$ to $b$. Suppose that $e$ is not in $\mathcal{F}$. Then there exists a $j$-edge in $P$ with $j\neq i$. As $e$ is an $i$-edge of $\mathcal{G}$ there exists a cycle containing the $j$-edge of $\mathcal{F}$ that does not contain any other $j$-edge, contradicting Lemma~\ref{ic}.
\end{proof}

\begin{lemma}\label{square}
Let $v$ and $w$ be vertices of an alternating square as in the following figure.
$$ \xymatrix@-1pc{   *+[o][F]{v} \ar@{-}[d]_i \ar@{-}[r]^j  & *+[o][F]{}\ar@{--}[d]^i \\
*+[o][F]{} \ar@{--}[r]_j& *+[o][F]{w}}$$
Then $v$ and $w$ are in different connected components of $\mathcal{F}$.
\end{lemma}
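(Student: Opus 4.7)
The proof is short and I would approach it by direct contradiction using Lemma~\ref{key}. First I would fix notation for the four vertices of the alternating square: call them $v$ (top-left), $x$ (top-right), $y$ (bottom-left), $w$ (bottom-right), so that the two solid edges (which lie in $\mathcal{F}$) are the $j$-edge $\{v,x\}$ and the $i$-edge $\{v,y\}$, while the two dashed edges (which lie in $\mathcal{G}$ but not in $\mathcal{F}$) are the $i$-edge $\{x,w\}$ and the $j$-edge $\{y,w\}$.

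Suppose for contradiction that $v$ and $w$ lie in the same connected component $T$ of $\mathcal{F}$. Since the edges $\{v,x\}$ and $\{v,y\}$ belong to $\mathcal{F}$, the vertices $x$ and $y$ are also in $T$. In particular $x$ and $w$ lie in the same connected component of $\mathcal{F}$. But $\{x,w\}$ is an $i$-edge of $\mathcal{G}$, so Lemma~\ref{key} forces $\{x,w\}$ to be an edge of $\mathcal{F}$, contradicting the fact that it is dashed. (Equivalently, one could use the $j$-edge $\{y,w\}$ together with $y,w \in T$.)

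The argument reduces to a single invocation of Lemma~\ref{key}, so there is no real obstacle beyond reading the figure carefully and noting that both off-diagonal edges of the square provide independent contradictions.
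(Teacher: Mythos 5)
Your proof is correct and follows essentially the same route as the paper: both place the top-right vertex (your $x$, the paper's $v'$) in the same component as $w$ and then invoke Lemma~\ref{key} on the $i$-edge $\{x,w\}$ to reach a contradiction. The only cosmetic difference is that you phrase the contradiction as the edge being forced into $\mathcal{F}$ despite being dashed, while the paper phrases it as $\mathcal{F}$ acquiring two $i$-edges; these are equivalent since $\mathcal{F}$ contains exactly one edge of each label.
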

\begin{proof}
Suppose that $v$ and $w$ are in the same connected component of $\mathcal{F}$. Then also $v'$, as in the following picture, is in the same connected component of $w$. Hence the $i$-edge $\{v',w\}$ must be in $\mathcal{F}$, by Lemma~\ref{key}. Thus $\mathcal{F}$ has two $i$-edges, a contradiction.
$$ \xymatrix@-1pc{   *+[o][F]{v} \ar@{-}[d]_i \ar@{-}[r]^j  & *+[o][F]{v'}\ar@{--}[d]^i \\
*+[o][F]{} \ar@{--}[r]_j& *+[o][F]{w}}$$
\end{proof}

\begin{lemma}\label{square2}
Consider the alternating square as in the following figure, having an $i$-edge in $\mathcal{F}$ and both $j$-edges not in $\mathcal{F}$.
$$ \xymatrix@-1pc{   *+[o][F]{v_1} \ar@{-}[r]^i \ar@{--}[d]^j  & *+[o][F]{v_2}\ar@{--}[d]^j \\
*+[o][F]{v_3} \ar@{--}[r]_i& *+[o][F]{v_4}}$$
Then $v_1$, $v_3$ and $v_4$ are in different connected components of $\mathcal{F}$.
\end{lemma}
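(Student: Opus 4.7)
The plan is to handle the three pairs among $\{v_1,v_3,v_4\}$ one at a time, using Lemma~\ref{key} (the contrapositive of which says: if $a$ and $b$ are in the same component of $\mathcal{F}$ and $\{a,b\}$ is an edge of $\mathcal{G}$ not in $\mathcal{F}$, we get a contradiction).

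For the pair $(v_1,v_3)$, I would observe that $\{v_1,v_3\}$ is a $j$-edge of $\mathcal{G}$ not in $\mathcal{F}$ by hypothesis, so Lemma~\ref{key} forces $v_1$ and $v_3$ to lie in distinct components of $\mathcal{F}$. Exactly the same argument applied to the $i$-edge $\{v_3,v_4\}$, which is not in $\mathcal{F}$, gives that $v_3$ and $v_4$ lie in distinct components.

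The only pair without a direct edge between them is $(v_1,v_4)$; here I would use $v_2$ as a relay. Since the $i$-edge $\{v_1,v_2\}$ belongs to $\mathcal{F}$, the vertices $v_1$ and $v_2$ lie in the same connected component of $\mathcal{F}$. On the other hand, the $j$-edge $\{v_2,v_4\}$ is in $\mathcal{G}$ but not in $\mathcal{F}$, so by Lemma~\ref{key} $v_2$ and $v_4$ must be in different components of $\mathcal{F}$. Combining these two facts, $v_1$ and $v_4$ cannot be in the same component either, completing the proof that $v_1$, $v_3$, $v_4$ all belong to pairwise distinct connected components of $\mathcal{F}$.

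There is essentially no obstacle here: the statement is a direct corollary of Lemma~\ref{key}, and the only slightly non-routine point is remembering to exploit the $\mathcal{F}$-edge $\{v_1,v_2\}$ to transfer the separation from the pair $(v_2,v_4)$ to the pair $(v_1,v_4)$. No case analysis on $i,j$ or on the structure of $\mathcal{G}$ beyond this square is needed.
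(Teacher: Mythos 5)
Your proof is correct and follows essentially the same route as the paper: both apply Lemma~\ref{key} to the edges of the square that are not in $\mathcal{F}$ to separate the directly joined pairs. The only (cosmetic) difference is in the pair $(v_1,v_4)$: you relay through $v_2$ via the $\mathcal{F}$-edge $\{v_1,v_2\}$ and the excluded $j$-edge $\{v_2,v_4\}$, whereas the paper invokes Lemma~\ref{ic} on a would-be cycle containing a single $i$-edge of $\mathcal{F}$ --- two phrasings of the same underlying argument.
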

\begin{proof}
As $\{v_1,v_3\}$,  $\{v_3,v_4\}$ and $\{v_2,v_4\}$ are not edges of $\mathcal{F}$, by Lemma~\ref{key} each of these pairs of vertices are in different components of $\mathcal{F}$. Moreover, all three vertices $v_1$, $v_3$ and $v_4$ must belong to different connected components of $\mathcal{F}$, otherwise there is a cycle in $\mathcal{G}$ containing only one $i$-edge of  $\mathcal{F}$ contradicting Lemma~\ref{ic}.
\end{proof}
From now on, we assume that $r \geq n-4$.
\begin{lemma}\label{dg<=3}
Let $1\leq l \leq 4$, and $n\geq 3+2l$ when $r = n-l$. 
The degree of a vertex of $\mathcal{F}$ is at most $3$. Moreover, a vertex of degree $3$ in $\mathcal{F}$ has degree $3$ also in $\mathcal{G}$.
\end{lemma}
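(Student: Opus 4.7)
The plan is to prove both assertions together by contradiction. Suppose there is a vertex $v$ with at least four distinct labels $i_1<i_2<i_3<i_4$ on edges incident to $v$ in $\mathcal{G}$, of which at least three lie in $\mathcal{F}$ (covering both $\deg_{\mathcal{F}}(v)\geq 4$ and the case $\deg_{\mathcal{F}}(v)=3$ with $\deg_{\mathcal{G}}(v)\geq 4$). Writing $v_a:=v\rho_{i_a}$, I would first check that $v_1,v_2,v_3,v_4$ are pairwise distinct: any coincidence $v_a=v_b$ gives two $\mathcal{G}$-edges of different labels between $v$ and this common vertex, which by Lemma~\ref{fracture}(3) must be in different components of $\mathcal{F}$, while at least one of those two edges lying in $\mathcal{F}$ places $v$ and $v_a$ in the same component.

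Next I exploit commutations. Since $i_4-i_1\geq 3$, each of the pairs $(i_1,i_3)$, $(i_1,i_4)$, $(i_2,i_4)$ satisfies $|i_a-i_b|\geq 2$, so $\rho_{i_a}$ and $\rho_{i_b}$ commute and produce an alternating square $v,v_a,v_b,u_{ab}$ with $u_{ab}:=v\rho_{i_a}\rho_{i_b}$. The two off-$v$ edges of this square cannot be in $\mathcal{F}$, because each label's unique $\mathcal{F}$-edge is already anchored at $v$, so Lemma~\ref{key} gives $u_{ab}\notin C_v$ for each pair; when the single edge missing from $\mathcal{F}$ at $v$ happens to be one of $\{v,v_a\},\{v,v_b\}$, the same conclusion follows with a short subcase analysis using Lemmas~\ref{swap} and~\ref{square2}. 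I would then show that $u_{13}, u_{14}, u_{24}$ lie in three pairwise-distinct components of $\mathcal{F}$, by examining 4-cycles of the form $u_{ab}-v_a-u_{ac}-v_a\rho_{i_b}\rho_{i_c}-u_{ab}$ (which form whenever $\rho_{i_b}$ and $\rho_{i_c}$ commute) and applying Lemma~\ref{key} to their non-$\mathcal{F}$ edges. Together with $C_v$ this already gives four distinct components, saturating the bound $l\leq 4$.

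The decisive step is then to force a fifth distinct component. Whenever three of the labels at $v$ pairwise commute---for instance if $\deg_{\mathcal{F}}(v)\geq 5$, or if $\{i_1,\ldots,i_4\}$ is not a block of consecutive integers---the Klein four-group they generate sends $v$ to a new vertex $w:=v\rho_{i_a}\rho_{i_b}\rho_{i_c}$, and a final application of Lemma~\ref{key} to the edges at $w$ forces it into yet another component, contradicting $l\leq 4$. The main obstacle is the consecutive subcase $\{i_1,\ldots,i_4\}=\{j,j+1,j+2,j+3\}$, in which no three incident labels pairwise commute so the Klein four-group trick cannot run inside the incident label set. This is precisely where the hypothesis $n\geq 3+2l$ enters: it forces $r=n-l\geq l+3$, so there must exist a label $i^\star$ outside the block $\{j,j+1,j+2,j+3\}$ with $|i^\star-i_a|\geq 2$ for two of the incident labels; transferring the square analysis across the $\mathcal{F}$-edge of label $i^\star$, using that $\rho_{i^\star}$ commutes with the corresponding $\rho_{i_a}$ and $\rho_{i_b}$, supplies the additional alternating square needed to produce the fifth component and close the argument.
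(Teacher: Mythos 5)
Your first half tracks the paper: the three pairs $(i_1,i_3)$, $(i_1,i_4)$, $(i_2,i_4)$ are exactly the commutations that are guaranteed for any four distinct labels, and the resulting three alternating squares placing $u_{13},u_{14},u_{24}$ and $C_v$ in four distinct components (hence forcing $r=n-4$) is the paper's opening move. The genuine gap is in your ``decisive step.'' First, your case split is wrong: the four labels failing to contain a pairwise-commuting triple is \emph{not} equivalent to their forming a consecutive block. The triple-free configurations are exactly those with $i_2=i_1+1$ and $i_4=i_3+1$, so $\{j,j+1,m,m+1\}$ with $m\geq j+3$ (e.g.\ $\{0,1,3,4\}$) also has no commuting triple, and your Klein four-group trick cannot run there either; this family is simply not covered by your argument. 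Second, and more seriously, the fallback for the consecutive case does not work as stated. The existence of a label $i^\star$ outside $\{j,\dots,j+3\}$ with $\rho_{i^\star}$ commuting with some $\rho_{i_a}$ produces an alternating square only at a vertex that is \emph{moved by both} generators; the vertex $v$ and indeed every vertex of your eight-vertex configuration may well be fixed by $\rho_{i^\star}$, so ``transferring the square analysis across the $\mathcal{F}$-edge of label $i^\star$'' has no content until you locate where that edge sits relative to the configuration. The paper closes this hole with a counting argument you are missing: since $n\geq 11$ exceeds the eight vertices of the configuration and $\mathcal{F}$ spans all $n$ vertices in exactly four components, some new vertex $d$ must be attached by an $\mathcal{F}$-edge to one of the eight pictured vertices, and the label of $\{c,d\}$ (say) cannot be consecutive with both labels already incident at the attachment point; that forced non-consecutive adjacency creates the extra alternating square whose far corner lands in a fifth component, contradicting Lemma~\ref{fracture}(2). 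Without locating the attachment of a new edge, the hypothesis $n\geq 3+2l$ never actually does any work in your argument.

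Two smaller points. In the Klein four-group step, applying Lemma~\ref{key} to the edges at $w$ only separates $w$ from the components of $u_{ab},u_{ac},u_{bc}$; separating $w$ from $C_v$ requires a cycle argument via Lemma~\ref{ic} (as does the pairwise separation of the $u$'s, which you assert but do not carry out). Also, the paper disposes of the $\deg_{\mathcal{F}}(v)=3$, $\deg_{\mathcal{G}}(v)\geq 4$ case much more cheaply: after using Lemma~\ref{swap} to assume both edges of the offending label in the relevant square are outside $\mathcal{F}$, Lemmas~\ref{square2} and~\ref{ic} already yield five components with no extension argument needed; folding this case into the degree-$4$ analysis, as you do, makes it depend on the broken decisive step.
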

\begin{proof}
Suppose that there exists a vertex $v$ of degree $\geq 4$ in $\mathcal{F}$. Let $v$ be incident to 4 edges with labels $i, j, k, l$. Without loss of generality, suppose that $i<j<k<l$. Then $v$ is a vertex on three alternating squares as in the following figure.

$$ \xymatrix@-1pc{                         & *+[o][F]{}   \ar@{--}[r]^l      \ar@{-}[d]^j      & *+[o][F]{c} \ar@{--}[d]^j\\
 *+[o][F]{}    \ar@{--}[d]^i \ar@{-}[r]^k        & *+[o][F]{v}  \ar@{-}[d]^i\ar@{-}[r]_l & *+[o][F]{}  \ar@{--}[d]^i\\
 *+[o][F]{a}   \ar@{--}[r]_k & *+[o][F]{}  \ar@{--}[r]_l & *+[o][F]{b} }$$

By Lemmas~\ref{square} and \ref{ic}, $v,a,b,c$ are in different connected components of  $\mathcal{F}$. Therefore $\mathcal{F}$ has at least 4 connected components. By Lemma~\ref{fracture}~(2), $r=n-4$ and $\mathcal{F}$ has exactly 4 connected components. 

Suppose that $n\geq 9$ there exists a vertex $d$ incident, in $\mathcal{F}$, to one vertex of the previous picture. We may assume without loss of generality that $d$ is incident to $c$ in $\mathcal{F}$. Similar arguments as the ones below permit to get contradictions for the other cases. As $j<k<l$ the label $s$ of the edge $\{c,d\}$ cannot be consecutive with both $j$ and $l$. Suppose that $s$ is not consecutive with $j$. Then there is an alternating square containing $c$ and a $j$-edge as in the following picture.
$$ \xymatrix@-1pc{             & *+[o][F]{}   \ar@{--}[r]^l      \ar@{-}[d]^j      & *+[o][F]{c} \ar@{--}[d]^j\ar@{-}[r]^s& *+[o][F]{d}\ar@{--}[d]^j\\
 *+[o][F]{}    \ar@{--}[d]^i \ar@{-}[r]^k        & *+[o][F]{v}  \ar@{-}[d]^i\ar@{-}[r]_l & *+[o][F]{}  \ar@{--}[d]^i\ar@{--}[r]_s & *+[o][F]{e} \\
 *+[o][F]{a}   \ar@{--}[r]_k & *+[o][F]{}  \ar@{--}[r]_l & *+[o][F]{b}& } $$
Again by Lemmas~\ref{swap}, \ref{square} and \ref{ic}, the vertices $v, a,b,c$ and $e$ are in different connected components of $\mathcal{F}$, a contradiction with the fact that there are exactly four connected components. If $s$ is consecutive with $j$, then it is not consecutive with $l$ and we get the same contradiction using $l$ instead of $j$.

 Finally suppose that a vertex $v$ has degree 3 in $\mathcal{F}$ and has a higher degree in  $\mathcal{G}$. Let $v$ be incident to 4 edges with labels $i, j, k, l$. Without loss of generality, suppose that $i<j<k<l$. Suppose that the $j$-edge containing $v$ is not in $\mathcal{F}$. 
$$ \xymatrix@-1pc{                         & *+[o][F]{}   \ar@{--}[r]^l      \ar@{--}[d]^j      & *+[o][F]{} \ar@{--}[d]^j\\
 *+[o][F]{}    \ar@{--}[d]^i \ar@{-}[r]^k        & *+[o][F]{v}  \ar@{-}[d]^i\ar@{-}[r]_l & *+[o][F]{}  \ar@{--}[d]^i\\
 *+[o][F]{}   \ar@{--}[r]_k & *+[o][F]{}  \ar@{--}[r]_l & *+[o][F]{} }$$
By Lemma~\ref{swap} we may assume that both $j$-edges of the figure above are not in $\mathcal{F}$. Then by Lemma~\ref{square2} and \ref{ic}, we get 5 connected components for $\mathcal{F}$, a contradiction with Lemma~\ref{fracture}(2). If we assume that another edge incident to $v$ is not in $\mathcal{F}$, we get the same contradiction.
\end{proof}
\begin{lemma}\label{dg=3}
Let $1\leq l \leq 4$, and $n\geq 3+2l$ when $r = n-l$. 
If $\Gamma$ has a fracture graph having a vertex of degree $3$ then $r\in\{n-4, n-3\}$ and $\Gamma$ admits, up to duality, a fracture graph $\mathcal{F}$ from the following list.
\begin{small}
\begin{center}
$(A)\; \xymatrix@-.2pc{
     *+[o][F]{}   \ar@{--}[d]_2  \ar@{-}[r]^0     & *+[o][F]{}  \ar@{-}[d]^2  \ar@{-}[r]^1  &   *+[o][F]{}  \ar@{--}[r]^2 &  *+[o][F]{} \ar@{-}[r]^3  &*+[o][F]{}\ar@{.}[rr]&& *+[o][F]{}\ar@{-}[r]^{n-4}&*+[o][F]{}\\
     *+[o][F]{}  \ar@{--}[r]_0     & *+[o][F]{}    &   &&&&&&}$

$(B)\; \xymatrix@-.2pc{
         *+[o][F]{}   \ar@{--}[d]_3  \ar@{-}[r]^0     & *+[o][F]{}  \ar@{-}[d]^3  \ar@{-}[r]^1  & *+[o][F]{} \ar@{--}[d]^3 \ar@{-}[r]^2 & *+[o][F]{} \ar@{--}[r]^3 & *+[o][F]{} \ar@{-}[r]^4& *+[o][F]{} \ar@{.}[rr] &&*+[o][F]{} \ar@{-}[r]^{n-5}&*+[o][F]{}\\
      *+[o][F]{}  \ar@{--}[r]_0     & *+[o][F]{}  \ar@{--}[r]_1   &  *+[o][F]{}&  &&&&&}$
$(C)\; \xymatrix@-.2pc{
     &*+[o][F]{}   \ar@{--}[d]_2  \ar@{-}[r]^0     & *+[o][F]{}  \ar@{-}[d]^{2}  \ar@{-}[r]^1  & *+[o][F]{}\ar@{--}[r]^2 & *+[o][F]{}\ar@{-}[r]^3 & *+[o][F]{}\ar@{.}[rr]&& *+[o][F]{}\ar@{-}[r]^{n-5}&*+[o][F]{}\\
    *+[o][F]{}  \ar@{--}[r]_1     & *+[o][F]{}  \ar@{--}[r]_0     & *+[o][F]{}    &   &&&&&}$

$(D) \; \xymatrix@-.2pc{
    *+[o][F]{}  \ar@{--}[r]^1 &  *+[o][F]{}   \ar@{--}[d]_2  \ar@{-}[r]^0     & *+[o][F]{}  \ar@{-}[d]^2  \ar@{-}[r]^1  & *+[o][F]{}\ar@{--}[r]^2 & *+[o][F]{}\ar@{-}[r]^3 & *+[o][F]{}\ar@{.}[rr]&& *+[o][F]{}\ar@{-}[r]^{n-5}&*+[o][F]{}\\
      & *+[o][F]{}  \ar@{--}[r]_0     & *+[o][F]{}    &   &&&&&}$

$(E)\; \xymatrix@-.2pc{
      *+[o][F]{}   \ar@{--}[d]_2  \ar@{-}[r]^0     & *+[o][F]{}  \ar@{-}[d]^2  \ar@{-}[r]^1  & *+[o][F]{}\ar@{--}[r]^2 & *+[o][F]{}\ar@{-}[r]^3 & *+[o][F]{}\ar@{.}[rr]&& *+[o][F]{}\ar@{-}[r]^{n-5}&*+[o][F]{}\ar@{--}[r]^{n-6} &*+[o][F]{}\\
      *+[o][F]{}  \ar@{--}[r]_0     & *+[o][F]{}    &   &&&&&&&&}$

\end{center}
\end{small}
\end{lemma}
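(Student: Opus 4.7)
The plan is to classify the labels $i < j < k$ of the three edges of $\mathcal{F}$ at a degree-$3$ vertex $v$ and then to use alternating squares to pin down the rest of $\mathcal{F}$. By Lemma~\ref{dg<=3}, $v$ has degree exactly $3$ in $\mathcal{G}$ as well, so $i,j,k$ are the labels of all edges of $\mathcal{G}$ at $v$. Since $k - i \geq 2$, the pair $(i,k)$ is always non-consecutive and produces an alternating square at $v$, whose opposite corner $u_{ik}$ is placed by Lemma~\ref{key} in a component of $\mathcal{F}$ different from that of $v$. Up to duality, three cases remain: (I) $j = i+1$ and $k = j+1$; (II) $j = i+1$ and $k \geq j+2$; (III) $j \geq i+2$ and $k \geq j+2$.

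The first main step is to rule out case (III). There the three non-consecutive pairs produce three alternating squares at $v$ with opposite corners $u_{ij}, u_{jk}, u_{ik}$, each separated from $v$ by Lemma~\ref{key}. A cycle argument shows that these three corners lie in pairwise distinct components as well: if for instance $u_{ij}$ and $u_{ik}$ were in the same component of $\mathcal{F}$, then any tree-path between them in $\mathcal{F}$ combined with the two-edge detour $u_{ij} \to a \to u_{ik}$ through the $i$-neighbour $a$ of $v$ (whose two edges carry labels $j$ and $k$) forms a cycle in $\mathcal{G}$; by Lemma~\ref{ic} every $\mathcal{F}$-edge of this cycle must have label in $\{j,k\}$, but the unique $j$- and $k$-edges of $\mathcal{F}$ are both at $v$, which is separated from $u_{ij}$, so the $\mathcal{F}$-path is empty and $u_{ij} = u_{ik}$, impossible because the common vertex would then carry two $i$-edges. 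This gives at least four components, forcing $r = n-4$. A label $l$ strictly between $i$ and $j$ (which exists since $j \geq i+2$) together with its commutation with $k$ (as $k - l \geq 3$) then produces a further alternating square whose opposite corner is, again via Lemma~\ref{key} and the same cycle argument, in a fifth component of $\mathcal{F}$, contradicting $r = n - 4$.

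In the surviving cases I would extend the local structure at $v$ to all of $\mathcal{F}$. In case (II), the same cycle argument applied to the two squares $(i,k)$ and $(j,k)$ at $v$ places $u_{ik}$ and $u_{jk}$ in distinct components from $v$ and from each other; cycle arguments involving any label strictly between $j$ and $k$ then force $k = j+2$, and by a further use of Lemma~\ref{dg<=3} together with duality one reduces to $(i,j,k) = (0,1,3)$, after which the remaining labels form a single linear chain and produce exactly (B). In case (I) only the $(i,k)$-square at $v$ is forced, giving the ``T-shape'' common to (A), (C), (D), (E); Lemma~\ref{dg<=3} bounds every other $\mathcal{F}$-vertex at degree at most $3$, and Lemmas~\ref{square} and \ref{square2} forbid further alternating squares not already in $\mathcal{F}$. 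After normalising $\mathcal{F}$ by edge-swaps from Lemma~\ref{swap} and case-checking the few admissible near-$v$ attachments, the only surviving configurations are precisely (A), (C), (D), (E), with (A) giving $r = n-3$ and the others giving $r = n-4$. The hardest step will be the fifth-component argument in case (III), where the intermediate label $l$ must be chosen so that its alternating square genuinely lies outside the four components already identified, together with the bookkeeping in case (I), where each admissible near-$v$ attachment must be identified with one of (A), (C), (D), (E) up to a swap.
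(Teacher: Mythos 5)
Your overall case split at the degree-$3$ vertex $v$ (labels $i<j<k$ all non-consecutive; $j=i+1$, $k\geq j+2$; $j=i+1$, $k=j+1$) is exactly the paper's, and your treatment of the four forced components in the all-non-consecutive case matches the paper's use of Lemmas~\ref{square}, \ref{square2} and \ref{ic}. The genuine gap is in how you then kill that case. You claim that a label $l$ with $i<l<j$ ``together with its commutation with $k$ produces a further alternating square'' whose opposite corner lands in a fifth component. But commutation of $\rho_l$ and $\rho_k$ only yields an alternating square at a vertex that is moved by \emph{both} $\rho_l$ and $\rho_k$, and nothing forces the unique $l$-edge of $\mathcal{F}$ to be incident to any $k$-edge of $\mathcal{G}$. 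Concretely, take $(i,j,k)=(0,2,4)$: the $1$-edge of $\mathcal{F}$ can be a pendant edge at the $(0,2)$-corner $a$ (where its label is consecutive with both dashed labels $0$ and $2$ and $a$ need not lie in the support of $\rho_4$), and the $3$-edge can similarly be pendant at the $(2,4)$-corner $c$. This gives a legitimate $4$-component configuration on $9$ vertices with no fifth component anywhere; it is excluded only because $r=n-4$ requires $n\geq 11$. That is precisely the paper's argument: each of $a,b,c$ can acquire at most one pendant vertex, whose attaching label must be consecutive with both incident dashed labels, so $n\leq 7+3=10<11$. A counting bound, not a fifth component, is what closes this case, and your proposal does not contain it.

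The remaining two cases are only sketched, and at least one step there inherits the same problem: in case (II) the paper does not ``force $k=j+2$ by cycle arguments'' but rather eliminates $k\geq i+4$ (its Case 2.1) by again growing the picture vertex by vertex and reaching $n=9$, a contradiction with $n\geq 11$. Your case (I) outline (normalise by Lemma~\ref{swap}, bound degrees by Lemma~\ref{dg<=3}, enumerate attachments) is the right shape and matches the paper's Case (3), though you would still need the paper's discussion of the equivalent variants $(C')$ and $(E')$ to justify listing only $(C)$ and $(E)$. As written, the proposal is not a correct proof: the central elimination step rests on a square that need not exist.
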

\begin{proof}
By Lemma~\ref{dg<=3}, we know that the degree of each vertex of $\mathcal{F}$ is at most 3.
Assume $v$ is a vertex of degree 3 in $\mathcal{F}$ (and hence also in $\mathcal{G}$).
Let $i,j,k$ be the labels of the edges incident to $v$. Without loss of generality we may assume that $i<j<k$ and we may consider, up to duality, the following cases: (1) $j\neq i+ 1$ and $k\neq j+1$; (2) $j= i+1$ and $k\neq i+2$; (3) $j=i+ 1$ and $k= i+2$.

Case (1): $j\neq i+ 1$ and $k\neq j+1$; there are three alternating squares in $\mathcal{G}$ as in the following figure.
$$ \xymatrix@-1pc{                         & *+[o][F]{a} & \\
        *+[o][F]{}  \ar@{--}[ru]^j  \ar@{--}[d]_k  \ar@{-}[r]_i     & *+[o][F]{v}  \ar@{-}[d]^k  \ar@{-}[r]_j  & *+[o][F]{} \ar@{--}[d]^k \ar@{--}[ul]_i \\      *+[o][F]{b}  \ar@{--}[r]_i     & *+[o][F]{}  \ar@{--}[r]_j   &  *+[o][F]{c} }$$

In this case $v,a,b$ and $c$ are, by Lemmas~\ref{ic} and \ref{square}, in different connected components. As $n\geq 11$, there is a vertex $d$ incident, in  $\mathcal{F}$, to one of the vertices of the previous picture. By Lemmas~\ref{dg<=3} and \ref{swap}, $d$ must be incident to either $a$, $b$ or $c$. 
Assume without loss of generality that $d$ is incident to $a$ and let $l$ be the label of the edge $\{a,d\}$. Then $l$ must be consecutive with both $i$ and $j$ and the degree of $d$ must be 1 in $\mathcal{G}$, for otherwise we get another square with a vertex in a fifth component of $\mathcal{F}$.  Hence, $n\leq 7+3=10$ and $r=n-4$, a contradiction.

Case (2): $j= i+1$ and $k\neq i+2$; there are two alternating squares as in the figure below:
$$ \xymatrix@-1pc{
        *+[o][F]{a}   \ar@{--}[d]_k  \ar@{-}[r]^i     & *+[o][F]{v}  \ar@{-}[d]^k  \ar@{-}[r]^{i+1}  & *+[o][F]{b} \ar@{--}[d]^k \\
             *+[o][F]{d}  \ar@{--}[r]_i     & *+[o][F]{c}  \ar@{--}[r]_{i+1}   &  *+[o][F]{e} }$$
By Lemmas~\ref{swap} and \ref{dg<=3}, $v$ and $c$ are both of degree 3 in $\mathcal{G}$.
As $n\geq 9$ either $a$, $b$, $d$ or $e$ is incident to another vertex. By Lemma~\ref{swap}, we may consider without loss of generality that $b$ is incident to a vertex $f$ (not necessarily in $\mathcal{F}$).
As $v$ has degree 3, the label of $\{b,f\}$ must be $i+2$. Now we consider separately the case $k\neq i+3$ and $k= i+3$.

Case (2.1): suppose that $k\neq i+3$. If $\{b,f\}$ is not in $\mathcal{F}$, by Lemmas~\ref{swap}, \ref{square2} and \ref{ic}, $\mathcal{F}$ has at least five connected components, a contradiction with Lemma~\ref{fracture} (2).
Hence $\{b,f\}$ is in $\mathcal{F}$.
$$ \xymatrix@-1pc{
         *+[o][F]{a}   \ar@{--}[d]_k  \ar@{-}[r]^i     & *+[o][F]{v}  \ar@{-}[d]^k  \ar@{-}[r]^{i+1}  & *+[o][F]{b} \ar@{--}[d]^k \ar@{-}[r]^{i+2} & *+[o][F]{f}\ar@{--}[d]^k  \\
      *+[o][F]{d}  \ar@{--}[r]_i     & *+[o][F]{c}  \ar@{--}[r]_{i+1}   &  *+[o][F]{e}\ar@{--}[r]_{i+2} &  *+[o][F]{g}}$$

Now the vertices $v$, $d$, $e$ and $g$ are in different components of $\mathcal{F}$, and
the degree of $b$ and $e$ in $\mathcal{G}$ is 3, by Lemmas~\ref{swap} and \ref{dg<=3}.
As $n\geq 11$ either $a$, $b$, $f$ or $g$ is incident, in $\mathcal{F}$, to another vertex. Assume without loss of generality that $f$ is incident to another vertex $h$ in $\mathcal{F}$.  Now the label of $\{f,h\}$ must be consecutive with both $i+2$ and $k$ . But then the degree of $a$, $d$ and $g$ is two in $\mathcal{G}$, and the degree of $h$ is one in $\mathcal{G}$.  Therefore $n=9$, a contradiction.

Case (2.2): suppose that $k=i+3$. In this case we assume that the vertices $a$, $d$ and $e$ have degree two, otherwise we get back to Case (2.1).
If $\{b,f\}$ is not in $\mathcal{F}$, then $f$ must be incident, in  $\mathcal{F}$, to another vertex $h$, and the label of $\{f,h\}$ cannot be consecutive with $i+2$, giving a contradiction with Lemmas~\ref{swap} and \ref{dg<=3}. Thus $\{b,f\}$ is in $\mathcal{F}$. 
As $\mathcal{F}$ has at least 3 components, $n\geq 9$,  thus $f$ must be incident with another vertex $g$. The label of the edge $\{f,g\}$ must be consecutive with $i+2$, therefore the edge $\{f,g\}$ is not in $\mathcal{F}$. All vertices of the component of $\mathcal{F}$ containing the vertex $g$ have degree at most 2 and incident edges have consecutive labels. We get the possibility $(B)$ given in this lemma.

Case (3): $j=i+ 1$ and $k= i+2$; we have one alternating square as in the following figure.
$$ \xymatrix@-1pc{
     *+[o][F]{a}   \ar@{--}[d]_{i+2}  \ar@{-}[r]^i     & *+[o][F]{v}  \ar@{-}[d]^{i+2}  \ar@{-}[r]^{i+1}  & *+[o][F]{b} \\
     *+[o][F]{d}  \ar@{--}[r]_i     & *+[o][F]{c}    &   }$$

If the degree of $a$ is greater than 1 in $\mathcal{F}$, then by Lemma~\ref{swap} there exists a fracture graph with a vertex of degree 3 adjacent to edges with labels as in one of cases (1) or (2). Thus we assume that the degree of $a$ is one in $\mathcal{F}$.
By the same reasoning the degree of $c$ in $\mathcal{F}$ is one and  $d$ is an isolated vertex of $\mathcal{F}$.
The degree of $b$ is one in $\mathcal{F}$, as another edge of $\mathcal{F}$ incident with $b$ would have a label not consecutive with $i+1$, implying that $v$ has degree 4 and contradicting Lemma~\ref{dg<=3}.
Hence, as  $\mathcal{F}$ has at three components, $r\geq n-3$.

Now suppose that there is another vertex of degree 3 in $\mathcal{F}$. In that case, $\mathcal{F}$ has 4 components: two components with 4 vertices and two isolated vertices, hence $n=4+4+1+1=10$, a contradiction. Thus only $v$ has degree 3 in $\mathcal{F}$.

If $r=n-3$, we get only one possibility for $\mathcal{F}$ corresponding to graph (A).

If $r=n-4$,  $\mathcal{F}$  has a component containing the vertex $v$ of degree 3, a component that is a isolated vertex $d$ and another two components only have vertices of degree $\leq 2$.
If $a$ is incident to another vertex $e$ in $\mathcal{G}$  then the label of $\{a, e\}$ must be consecutive with $i$ by Lemma~\ref{dg<=3}.
Suppose it is $i-1$ then we have an alternating square with labels $i-1$ and $i+2$, as in the following figure.
$$ \xymatrix@-1pc{
    *+[o][F]{e}   \ar@{--}[d]_{i+2}  \ar@{--}[r]^{i-1}     &  *+[o][F]{a}   \ar@{--}[d]_{i+2}  \ar@{-}[r]^i     & *+[o][F]{v}  \ar@{-}[d]^{i+2}  \ar@{-}[r]^{i+1}  & *+[o][F]{b} \\
    *+[o][F]{f}  \ar@{--}[r]_{i-1}     & *+[o][F]{d}  \ar@{--}[r]_i     & *+[o][F]{c}    &   }$$
By Lemmas~\ref{square2} and \ref{ic}, $v$, $d$, $e$ and $f$ are in different components of $\mathcal{F}$.
As $n>7$ one of the vertices of the figure above is incident (in $\mathcal{F}$) to another vertex and we get a fifth connected component of $\mathcal{F}$, a contradiction.
Hence the label of $\{a,e\}$ is $i+1$. Similarly if $c$ or $d$ is incident to another vertex, the label of that edge must be $i+1$.
If two vertices of the set $\{a,c, d\}$ have degree 3 in $\mathcal{G}$, then we get two additional components that are isolated vertices. In that case, $n\leq 7$, a contradiction.
Therefore, only one vertex of $\{a,c, d\}$ can have degree 3 in $\mathcal{G}$.
If $d$ has degree 3 then the vertex $e$ incident to $d$ must be an isolated vertex of $\mathcal{F}$. Moreover, either $e$ or $b$ has degree greater than 1 in $\mathcal{G}$ (but not both). Thus we get two possibilities for $\mathcal{F}$, one corresponding to graph $(C)$ of this lemma and the other is the following graph.
$$(C')\; \xymatrix@-.2pc{
     &*+[o][F]{}   \ar@{-}[d]_2  \ar@{--}[r]^0     & *+[o][F]{}  \ar@{--}[d]^{2}  \ar@{--}[r]^1  & *+[o][F]{}\ar@{--}[r]^2 & *+[o][F]{}\ar@{-}[r]^3 & *+[o][F]{}\ar@{.}[rr]&& *+[o][F]{}\ar@{-}[r]^{n-5}&*+[o][F]{}\\
    *+[o][F]{}  \ar@{-}[r]_1     & *+[o][F]{}  \ar@{-}[r]_0     & *+[o][F]{}    &   &&&&&}$$
In $(C')$ both $0$-edges, $1$-edges and $2$-edges are between vertices in different  $\Gamma_i$-orbits ($i=0,1$, or $2$ respectively). Therefore if $\Gamma$ admits the fracture graph $C$, it also admits the fracture graph $C'$ and vice-versa, hence in this lemma only the possibility $(C)$ need to be listed. By the same reasoning, if $a$ or $c$ has degree 3, $\Gamma$ has a fracture graph $(D)$.

Now suppose that $a, c$ and $d$ have degree 2.
Then $b$ is incident to a vertex $e$ in $\mathcal{G}$ and the label of $\{b,e\}$ must be consecutive with $i+1$. Without loss of generality, we may assume it is $i+2$.
The label of an edge incident with  $e$ must be consecutive with $e$, thus $e$ is either an isolated vertex of $\mathcal{F}$ or has degree one in $\mathcal{F}$.
Suppose that $e$ is an isolated vertex of $\mathcal{F}$.
$$ \xymatrix@-1pc{
      *+[o][F]{}   \ar@{--}[d]_{i+2}  \ar@{-}[r]^i     & *+[o][F]{}  \ar@{-}[d]^{i+2}  \ar@{-}[r]^{i+1}  & *+[o][F]{}\ar@{--}[r]^{i+2} & *+[o][F]{e}\ar@{--}[r]^l & *+[o][F]{}\ar@{-}[r]^k& *+[o][F]{}\ar@{-}[r]^s& *+[o][F]{}\ar@{.}[r] &*+[o][F]{}\ar@{-}[r]^{r-1}&*+[o][F]{}\\
      *+[o][F]{}  \ar@{--}[r]_i     & *+[o][F]{}    &   &&&&&&}$$
If $l=i+1$ then $k\in\{i,i+2\}$, which is not possible. Hence $l=i+3$, $k=i+4$, $s=i+3$, but then $n=9$, a contradiction.
Thus $e$ is not isolated.
Suppose that the last component is an isolated vertex, then there is one possibility for $\mathcal{F}$, corresponding to graph (E).

Now suppose that $\mathcal{F}$ has only one isolated vertex (the one in the square). 
$$ \xymatrix@-1pc{
      *+[o][F]{}   \ar@{--}[d]_{i+2}  \ar@{-}[r]^i     & *+[o][F]{}  \ar@{-}[d]^{i+2}  \ar@{-}[r]^{i+1}  & *+[o][F]{}\ar@{--}[r]^{i+2} & *+[o][F]{}\ar@{-}[r]^{i+3} &  *+[o][F]{}\ar@{.}[rr]& &*+[o][F]{}\ar@{-}[r]^{x} &*+[o][F]{}\ar@{--}[r]^y&*+[o][F]{}\ar@{-}[r]^z &  *+[o][F]{}\ar@{.}[rr]&& *+[o][F]{}\ar@{-}[r]&*+[o][F]{}\\
      *+[o][F]{}  \ar@{--}[r]_i     & *+[o][F]{}    &   & & &&& &&&&&}$$
In that case, $y\in\{x-1,x+1\}\cap\{z-1,z+1\}$ and $x<z$ imply that $y=x+1=z-1$. Hence $z=x+2$ and the unique possibility for $\mathcal{F}$ is graph $(E')$.
$$(E')\; \xymatrix@-.2pc{
      *+[o][F]{}   \ar@{--}[d]_2  \ar@{-}[r]^0     & *+[o][F]{}  \ar@{-}[d]^2  \ar@{-}[r]^1  & *+[o][F]{}\ar@{--}[r]^2 & *+[o][F]{}\ar@{-}[r]^3 & *+[o][F]{}\ar@{.}[rr]&& *+[o][F]{}\ar@{-}[r]^{n-7}& *+[o][F]{}\ar@{--}[r]^{n-6} &*+[o][F]{}\ar@{-}[r]^{n-5}&*+[o][F]{}\ar@{-}[r]^{n-6} &*+[o][F]{}\\
      *+[o][F]{}  \ar@{--}[r]_0     & *+[o][F]{}    &   &&&&&&&&&&}$$
As the dashed edges are uniquely determined, if $(E)$ is a fracture graph of $\Gamma$ then $(E')$ is also a fracture graph of $\Gamma$ and vice-versa, so only one of these two graphs needs to be listed in this lemma.
\end{proof}


\begin{lemma}\label{T}
Let $1\leq l \leq 4$, and $n\geq 3+2l$ when $r = n-l$. 
If a fracture graph $\mathcal{F}$ of $\Gamma$ has adjacent edges with non-consecutive labels, then $\Gamma$ also has a fracture graph with a vertex of degree $3$.
\end{lemma}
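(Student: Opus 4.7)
The plan is to exhibit another fracture graph with a vertex of degree $3$ by a sequence of swaps performed via Lemma~\ref{swap}, reducing everything to a small exceptional configuration that I then eliminate using the transitivity of $\Gamma$. If $\mathcal{F}$ itself already has a vertex of degree $3$, there is nothing to do, so I assume otherwise; by Lemma~\ref{dg<=3} every vertex of $\mathcal{F}$ then has degree at most $2$ and each component of $\mathcal{F}$ is a path. Let $v$ be a vertex of $\mathcal{F}$ whose two incident edges $\{v,a\}$ and $\{v,b\}$ carry non-consecutive labels $i$ and $j$ with $|i-j|\geq 2$. Because $\rho_i$ and $\rho_j$ commute, the alternating square at $v$ closes at $c := v\rho_i\rho_j = v\rho_j\rho_i$, adjoining to $\mathcal{G}$ a $j$-edge $\{a,c\}$ and an $i$-edge $\{b,c\}$, neither of which lies in $\mathcal{F}$.

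Next, applying Lemma~\ref{swap} to the $4$-cycle $v,a,c,b$, I would produce three alternative fracture graphs: $\mathcal{F}_1$ (interchange the $i$-edge $\{v,a\}$ with $\{b,c\}$), $\mathcal{F}_2$ (interchange the $j$-edge $\{v,b\}$ with $\{a,c\}$), and $\mathcal{F}_3$ (both swaps at once). A direct count of incident edges yields $\deg_{\mathcal{F}_1}(b) = \deg_{\mathcal{F}}(b) + 1$, $\deg_{\mathcal{F}_2}(a) = \deg_{\mathcal{F}}(a) + 1$ and $\deg_{\mathcal{F}_3}(c) = \deg_{\mathcal{F}}(c) + 2$. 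Consequently, if either $a$ or $b$ is an interior vertex of its path-component in $\mathcal{F}$ (i.e., has degree $2$), or if $c$ is not isolated in $\mathcal{F}$, one of $\mathcal{F}_1,\mathcal{F}_2,\mathcal{F}_3$ already has a vertex of degree~$3$ (which is exactly $3$ by Lemma~\ref{dg<=3}). The only case left is the one where $a$ and $b$ are leaves of the length-$2$ path-component $a-v-b$ and $c$ is an isolated vertex of $\mathcal{F}$.

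To dispose of this remaining configuration I would use transitivity. By construction $\rho_i$ and $\rho_j$ each preserve $\{v,a,b,c\}$ setwise, so if every $\rho_k$ with $k\notin\{i,j\}$ also preserved $\{v,a,b,c\}$ setwise then this set would be $\Gamma$-invariant, contradicting transitivity of $\Gamma$ on $n \geq 9$ points. Hence there exist $k\notin\{i,j\}$ and $x\in\{v,a,b,c\}$ with $y := x\rho_k \notin \{v,a,b,c\}$, yielding a $\mathcal{G}$-edge $\{x,y\}$ of label $k$ outside $\mathcal{F}$ (the unique $\mathcal{F}$-$k$-edge sits in a path-component disjoint from $\{v,a,b,c\}$ since $k\notin\{i,j\}$). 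A further application of Lemma~\ref{swap}, to a cycle through $\{x,y\}$ and the $\mathcal{F}$-$k$-edge, installs $\{x,y\}$ into the fracture graph. Combined with whichever of $\mathcal{F}_1,\mathcal{F}_2,\mathcal{F}_3$ provides $x$ a second new incident edge (namely $\mathcal{F}_3$ if $x=c$, $\mathcal{F}_1$ if $x=b$, $\mathcal{F}_2$ if $x=a$, or no square-swap at all if $x=v$), the vertex $x$ ends up with three $\mathcal{F}$-incident edges.

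The hard part is precisely the cycle construction in this final step: I need a cycle in $\mathcal{G}$ passing through the two candidate $k$-edges and \emph{exactly} those two, so that Lemma~\ref{swap} genuinely applies, and I also need the $k$-swap to remain compatible with the square-swap chosen to target $x$. I would handle this by using connectedness of $\mathcal{G}$ (itself a consequence of transitivity) to build a $k$-edge-avoiding $\mathcal{G}$-path between the components holding the two $k$-edges, close it up through these two edges, and then invoke Lemmas~\ref{ic} and~\ref{key} to exclude extraneous $k$-edges on the resulting cycle and confirm that the combined swap yields a bona fide fracture graph.
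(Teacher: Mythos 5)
Your opening reduction is sound and coincides with the paper's first move: if any vertex of the alternating $\{i,j\}$-square supports a further edge of $\mathcal{F}$, one or two applications of Lemma~\ref{swap} around the square manufacture a degree-$3$ vertex, so one may assume the component of $v$ is exactly the path $a$--$v$--$b$ and $c$ is isolated. The genuine gap is in your final step. Having found, by transitivity, a $k$-edge $\{x,y\}$ with $x\in\{v,a,b,c\}$, $y$ outside, and $k\notin\{i,j\}$, you assert that this edge can be ``installed'' into a fracture graph. By Definition~\ref{def1} that requires $x$ and $y$ to lie in distinct $\Gamma_k$-orbits, and nothing in the configuration forces this: if some other edge with a label $k'\neq k$ also leaves $\{v,a,b,c\}$, there may be a path from $x$ to $y$ avoiding $k$-edges entirely (through the square via its $i$- and $j$-edges, out along the $k'$-edge, and back to $y$), in which case $\{x,y\}$ belongs to \emph{no} fracture graph and your construction collapses. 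Your proposed repair --- building a $k$-edge-avoiding path between the two candidate $k$-edges and closing it into a cycle to which Lemma~\ref{swap} applies --- presupposes exactly the orbit separation that is in question; Lemmas~\ref{ic} and~\ref{key} constrain cycles that already exist, they do not produce one.

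The paper closes this case by an entirely different mechanism: it analyses the label $k$ of the outgoing edge case by case (not consecutive with either of $i,j$; consecutive with exactly one; consecutive with both), and in each case uses Lemmas~\ref{square2} and~\ref{ic} to force additional connected components of $\mathcal{F}$, then plays the resulting component count against Lemma~\ref{fracture}(2) and the numerical hypotheses $n\geq 3+2l$ to reach a contradiction. Note that your argument barely uses those lower bounds on $n$, whereas the paper's contradictions ($\mathcal{F}$ having five components, or $n\leq 8$, $n<8$, etc.) depend on them essentially; this is a reliable sign that the concluding step of your proof cannot be completed along the lines you sketch. To fix it you would either have to prove, in each sub-case for $k$, that a suitable outgoing edge joins distinct $\Gamma_k$-orbits, or abandon the constructive route and fall back on the component-counting contradiction.
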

\begin{proof}
Suppose that $\mathcal{F}$ has two adjacent edges with non-consecutive labels $i$ and $j$
and assume, by way of contradiction, that $\Gamma$ has no fracture graph with a vertex of degree 3.
The graph $\mathcal{G}$ has an alternating square with labels $i$ and $j$.
$$ \xymatrix@-1pc{
     *+[o][F]{a}   \ar@{-}[r]^i  \ar@{--}[d]_j     & *+[o][F]{b}  \ar@{-}[d]^j  \\
    *+[o][F]{c}  \ar@{--}[r]_i  & *+[o][F]{d}                                        }$$
There exist no vertex incident, in $\mathcal{F}$, to one of the vertices of the alternating $i,j$ square for, otherwise, using Lemma~\ref{swap}, we can create a fracture graph for $\Gamma$ with a vertex of degree 3.
Thus $\mathcal{F}$ has at least three components and  $n\geq 9$. Hence there is a vertex $e$ adjacent to one of the vertices of the square in $\mathcal{G}$. By Lemma~\ref{swap} we may assume that there exists a $k$-edge $\{a,e\}$ not in $\mathcal{F}$.
$$ \xymatrix@-1pc{
    *+[o][F]{e}  \ar@{--}[r]^k  & *+[o][F]{a}   \ar@{-}[r]^i  \ar@{--}[d]_j     & *+[o][F]{b}  \ar@{-}[d]^j  \\
   & *+[o][F]{c}  \ar@{--}[r]_i  & *+[o][F]{d}                                        }$$
If $k$ is not consecutive with both $i$ and  $j$, we have another two alternating squares and,  by Lemmas~\ref{square2} and \ref{ic}, we get five connected components, a contradiction with Lemma~\ref{fracture}(2). Thus $k$ must be consecutive either with $i$ or with $j$.
Suppose that $k=i-1$ and $j$ is not consecutive with $k$.
Then, by Lemmas~\ref{swap} and \ref{square2}, $\mathcal{F}$ has 4 connected components.
Therefore, $b$ and $d$ have degree 2 in $\mathcal{G}$.
Now let $e$ be incident to another vertex $f$ and let $l$ be the label of $\{e,f\}$.
If $l$ is not consecutive either with $i-1$ or with $j$, using Lemmas \ref{square2} and \ref{ic}, we get a fifth connected component, a contradiction with Lemma~\ref{fracture}(2). Thus $j=i-3$ and $l=i-2$. Moreover the vertex $g$ in the following figure must be isolated.
$$ \xymatrix@-1pc{
   *+[o][F]{f}  \ar@{-}[r]^{i-2} &   *+[o][F]{e}\ar@{--}[d]_{i-3}    \ar@{--}[r]^{i-1} & *+[o][F]{a}   \ar@{-}[r]^i  \ar@{--}[d]_{i-3}     & *+[o][F]{b}  \ar@{-}[d]^{i-3}  \\
   &*+[o][F]{g}  \ar@{--}[r]_{i-1}  & *+[o][F]{c}  \ar@{--}[r]_i  & *+[o][F]{d}                                        }$$
As $n\geq 11$, $f$ must be $(i-1)$-incident to another vertex $h$, but then $h$ cannot be incident to any other vertex; thus we get $n\leq 8$, a contradiction.
Consequently the label $k$ of the edge $\{a,e\}$ is consecutive with both $i$ and $j$. 
Let $k=i+1$ and $j=i+2$.
Now $a$, $c$ and $e$ have degree 2 in $\mathcal{G}$ for, if one of these vertices has degree 3 in $\mathcal{G}$, the third edge must have label $i-1$ and in that case, we cannot extend the graph without creating too many connected components for $\mathcal{F}$. As $n\geq 11$, $e$ is $l$-adjacent to another vertex $f$ in $\mathcal{G}$.
Moreover $l$ must be consecutive with $i+1$, thus $\{e,f\}$ is not in $\mathcal{F}$ and $\mathcal{F}$ has 4 components.
We may assume that $l=i+2$ and then the other edge incident  to $f$ has label $i+1$. This gives $n<8$, a contradiction.
\end{proof}
\begin{lemma}\label{G1}
Let $1\leq l \leq 4$, and $n\geq 3+2l$ when $r = n-l$. 
Suppose that $\mathcal{G}$ has adjacent edges with nonconsecutive labels but there is no fracture graph having adjacent edges with nonconsecutive labels. 
Then $r\in\{n-4,n-3\}$ and $\mathcal{G}$ admits, up to a duality,  a fracture graph $\mathcal{F}$ of the following list.
\begin{small}
\begin{center}
$ (G) \xymatrix@-.2pc{
     *+[o][F]{}   \ar@{-}[r]^0  \ar@{--}[d]_2     & *+[o][F]{}  \ar@{-}[r]^1 \ar@{--}[d]^2   & *+[o][F]{}  \ar@{..}[rr] && *+[o][F]{}  \ar@{-}[r]^{n-4}& *+[o][F]{}  \\
     *+[o][F]{}  \ar@{--}[r]_0     & *+[o][F]{} &    & & &  }$

$ (H) \xymatrix@-.2pc{
     *+[o][F]{}   \ar@{-}[r]^0  \ar@{--}[d]_3    & *+[o][F]{}  \ar@{-}[r]^1 \ar@{--}[d]^3 & *+[o][F]{} \ar@{--}[d]^3  \ar@{-}[r]^2 & *+[o][F]{} \ar@{.}[rr]&&*+[o][F]{} \ar@{-}[r]^{n-5} &*+[o][F]{}\\
     *+[o][F]{}  \ar@{--}[r]_0    & *+[o][F]{}   \ar@{--}[r]_1 &   *+[o][F]{} & &&&}$

$ (I) \xymatrix@-.2pc{
     *+[o][F]{}   \ar@{-}[r]^0  \ar@{--}[d]_2     & *+[o][F]{}  \ar@{-}[r]^1 \ar@{--}[d]^2 &*+[o][F]{}  \ar@{.}[rr] &&*+[o][F]{}  \ar@{-}[r]^{n-7} &*+[o][F]{}  \ar@{--}[r]^{n-6} &*+[o][F]{}  \ar@{-}[r]^{n-5} &*+[o][F]{}  \ar@{-}[r]^{n-6} &*+[o][F]{}\\
     *+[o][F]{}  \ar@{--}[r]_0     & *+[o][F]{}    &    }$

\end{center}
\end{small}
\end{lemma}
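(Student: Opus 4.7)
My plan is first to establish that every fracture graph of $\Gamma$ has maximum vertex degree at most $2$. If not, Lemma~\ref{dg=3} exhibits a fracture graph of $\Gamma$ of one of the shapes (A)--(E); but each of these displays two $\mathcal{F}$-edges meeting at the degree-$3$ vertex with labels $\{0,2\}$ or $\{0,3\}$, contradicting the hypothesis. So each fracture graph is a disjoint union of paths, and since no two $\mathcal{F}$-edges at a common vertex have non-consecutive labels, the labels along each path form a consecutive block of $\{0,\ldots,r-1\}$.

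Next I pick the adjacent edges $\{a,b\}$ of label $i$ and $\{b,c\}$ of label $j$, $j\geq i+2$, granted by hypothesis, and use the commuting property to obtain the alternating square with fourth vertex $d=a\rho_j$. As any two adjacent $\mathcal{F}$-edges have consecutive labels, at most one of the four square-edges lies in $\mathcal{F}$. Lemma~\ref{swap} applied to the $4$-cycle of the square lets me swap independently the $i$-edge and the $j$-edge of $\mathcal{F}$ between the two square-edges of each label, so up to this swap and up to duality I may assume either that (Case~1) the $i$-edge of $\mathcal{F}$ is $\{a,b\}$ and neither $j$-edge of the square is in $\mathcal{F}$, or (Case~2) no square-edge is in $\mathcal{F}$.

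In Case~1, Lemma~\ref{square2} places $a$, $c$, $d$ in three distinct components $C_a, C_c, C_d$ of $\mathcal{F}$, so $n-r\geq 3$; combined with $r\geq n-4$ this gives $r\in\{n-3,n-4\}$. I then analyze where the $j$-edge of $\mathcal{F}$ lies and the possible sizes and label blocks of the remaining components, using that (a) no component other than $C_a$ carries label $i$, (b) each vertex of $\mathcal{G}$ has at most one edge per label, (c) any further non-consecutive adjacency in $\mathcal{G}$ spawns another alternating square whose component count is severely restricted by Lemmas~\ref{square2} and~\ref{ic}, and (d) the labels $\{0,\ldots,r-1\}$ partition as consecutive blocks over the $n-r$ components. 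This should force $j\in\{i+2,i+3\}$ and exactly three admissible shapes: $(G)$, where $r=n-3$, $j=i+2$, $C_a$ carries all labels with $\{a,b\}$ at one end and $c,d$ are isolated singletons; $(H)$, where $r=n-4$, $j=i+3$, $C_a$ again carries all labels and one extra singleton component appears beyond $c,d$; and $(I)$, where $r=n-4$, $j=i+2$, with the labels splitting as $\{0,\ldots,n-7\}$ on $C_a$ plus a forced two-edge tail component carrying $\{n-6,n-5\}$ that absorbs the remaining vertex.

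In Case~2, Lemma~\ref{key} places $a,b,c,d$ in four distinct components, forcing $r=n-4$; I would locate the $i$-edge of $\mathcal{F}$ in one of these components, build a cycle of $\mathcal{G}$ containing it together with a square-edge and no other $i$-edge, and apply Lemma~\ref{swap} to reduce to Case~1, yielding no new shapes. I expect the most delicate step to be the rigid enumeration in $(I)$, where the splitting $\{0,\ldots,n-7\}\cup\{n-6,n-5\}$ and the precise shape of the tail must be singled out by systematically ruling out alternative partitions that would either introduce a degree-$3$ vertex into some fracture graph of $\Gamma$ or produce a new pair of adjacent non-consecutive labels in $\mathcal{F}$ after an admissible swap.
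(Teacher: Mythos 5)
Your overall strategy---reduce to fracture graphs of maximum degree $2$ whose paths carry consecutive label blocks, form the alternating square, count components via Lemmas~\ref{key} and~\ref{square2}, and then enumerate---is the same as the paper's, but two steps have genuine gaps. First, in your Case~2 (no square edge in $\mathcal{F}$, hence all four square vertices in distinct components and $r=n-4$), the proposed reduction to Case~1 does not go through as stated: Lemma~\ref{swap} requires a cycle of $\mathcal{G}$ whose only $i$-edges are the $i$-edge of $\mathcal{F}$ and a square $i$-edge, and you neither exhibit such a cycle nor can one be guaranteed to exist --- indeed the square's $i$-edge $\{a,b\}$ may join two vertices of the same $\Gamma_i$-orbit, in which case no fracture graph whatsoever contains it and no swap is possible. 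The paper instead eliminates this case directly: the four square vertices already exhaust the four components, so any further edge meeting the square must be an $\mathcal{F}$-edge whose label $k$ is consecutive with both $i$ and $j$ (otherwise a new alternating square forces a fifth component via Lemmas~\ref{square2} and~\ref{ic}); then three components are isolated vertices and the unique non-trivial component would need to carry both a $(k-1)$-edge and a $(k+1)$-edge adjacent along a path, which is impossible when all adjacent $\mathcal{F}$-labels are consecutive.

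Second, and more importantly, the enumeration in your Case~1 --- which is the entire content of the lemma --- is only asserted (``this should force $j\in\{i+2,i+3\}$ and exactly three admissible shapes''). The paper's derivation hinges on a sub-case split you do not identify: whether the vertex $c$ adjacent to $b$ outside the square lies in the same component of $\mathcal{F}$ as $b$. If it does, Lemma~\ref{key} forces $\{b,c\}\in\mathcal{F}$ with label $k=i+1$, and one then obtains $(G)$ when $r=n-3$, and $(H)$ (with $j=i+3$) or $(I)$ (with $j=i+2$) when $r=n-4$ according as $k$ fails or does not fail to be consecutive with $j$; if it does not, one shows $k$ must be consecutive with both $i$ and $j$ and that the resulting configuration cannot be completed for $n>7$. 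Without carrying out this analysis, neither the three listed graphs nor the exclusion of all other configurations is established; your items (a)--(d) are the right tools, but they are not yet a proof.
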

\begin{proof}
Let $i$ and $j$ be the labels of two adjacent edges in $\mathcal{G}$ with $|i-j|>2$.
There is an alternating square in $\mathcal{G}$ containing these edges. Let $\mathcal{F}$ be a fracture graph of $\mathcal{G}$.

Suppose that the vertices of the alternating square are in different connected components of $\mathcal{F}$ (which can happen only when $r=n-4$).
Consider a vertex $k$-incident to one of the vertices of the square.
As $\mathcal{F}$ has 4 connected components, therefore this $k$-edge is an edge of $\mathcal{F}$.
Suppose that $k$ and $j$ are not consecutive.
$$ \xymatrix@-1pc{
   *+[o][F]{}   \ar@{--}[r]^i \ar@{--}[d]^j&  *+[o][F]{}  \ar@{--}[d]^j \ar@{-}[r]^k  & *+[o][F]{}  \ar@{--}[d]^j  \\
   *+[o][F]{}  \ar@{--}[r]_i  & *+[o][F]{} \ar@{--}[r]_k   & *+[o][F]{}                                   }$$
Then by Lemmas~\ref{square2} and \ref{ic} we get five components of $\mathcal{F}$, a contradiction.
Thus $k$ must be consecutive with both $i$ and $j$. Therefore 3 components of $\mathcal{F}$ are isolated vertices and the component with more than one vertex must contain a  $(k-1)$-edge and a $(k+1)$-edge simultaneously, which is not possible as all incident edges of $\mathcal{F}$ have consecutive labels.

Now suppose that a pair of vertices of the alternating square are in the same connected component of $\mathcal{F}$.
Then by Lemma~\ref{key} one of the edges of the square is in $\mathcal{F}$.
Without loss of generality suppose that one of the $i$-edges is in $\mathcal{F}$.
Let $\{a,b\}$ be the $i$-edge of $\mathcal{F}$ and  $\{v,w\}$ be the other $i$-edge of the alternating square. By Lemma~\ref{square2}, $v$, $w$ and $a$  are in different connected components of $\mathcal{F}$.
As before there is an edge incident to one of the vertices of the square. By Lemma~\ref{swap} we may suppose without loss of generality that it connects $b$ to some other vertex $c$.

Consider first the case $r=n-3$. As $\mathcal{F}$ has 3 connected components,
$c$ is in the same connected component as one of the vertices of the alternating square. Hence by Lemmas~\ref{ic} and \ref{key}, $\{b,c\}$ is the $k$-edge of $\mathcal{F}$.
$$ \xymatrix@-1pc{
    *+[o][F]{a}  \ar@{-}[r]^i \ar@{--}[d]_j   & *+[o][F]{b}   \ar@{-}[r]^k  \ar@{--}[d]^j     & *+[o][F]{c}  \\
   *+[o][F]{v}  \ar@{--}[r]_i  & *+[o][F]{w}   &                                     }$$
By hypothesis $k$ must be consecutive with $i$.
Suppose that $k$ and $j$ are not consecutive. Hence there is an alternating $\{k,j\}$-square and by Lemmas~\ref{square} and \ref{ic} we get a fourth component, which is not the case we are dealing with.
Thus $k$ is consecutive with both $i$ and $j$.
Therefore $a$ has degree one in $\mathcal{F}$ and $v$ and $w$ are isolated vertices of $\mathcal{F}$. Then we get the graph $(G)$ of this lemma.

Now let $r=n-4$.
First suppose that $b$ and $c$ are in different connected components of $\mathcal{F}$.
Let $k$ be the label of the $\{b,c\}$.
Suppose that $k$ is not consecutive with $i$. Then, by Lemma~\ref{square2}, we have a fifth connected component of $\mathcal{F}$, a contradiction.
If $k$ is not consecutive with $j$ we have two alternating squares as in the following figure.
$$ \xymatrix@-1pc{
   *+[o][F]{a}   \ar@{-}[r]^i \ar@{--}[d]^j&  *+[o][F]{b}  \ar@{--}[d]^j \ar@{--}[r]^k  & *+[o][F]{c}  \ar@{--}[d]^j  \\
   *+[o][F]{v}  \ar@{--}[r]_i  & *+[o][F]{w} \ar@{--}[r]_k   & *+[o][F]{}                                   }$$
By Lemmas ~\ref{square2} and \ref{ic} either a $k$-edge or a $j$-edge is in $\mathcal{F}$. Then by  Lemma~\ref{swap} we get $b$ and $c$ in the same connected component, or two non-consecutive incident edges of $\mathcal{F}$, a contradiction.
Hence $k$ is consecutive with both $i$ and $j$ and only the vertex $b$ has degree greater than 2 in $\mathcal{G}$. All remaining incident edges must be consecutive.
$$ \xymatrix@-1pc{
   *+[o][F]{}   \ar@{-}[r]^i \ar@{--}[d]_{i+2}&  *+[o][F]{}  \ar@{--}[d]^{i+2} \ar@{--}[r]^{i+1}  & *+[o][F]{}\ar@{-}[r]^{i+2}  & *+[o][F]{}\ar@{-}[r]^{i+1} & *+[o][F]{} \\
   *+[o][F]{}  \ar@{--}[r]_i  & *+[o][F]{} &          &&                        }$$
It is clear that is not possible, thus we get no possibility for $\mathcal{G}$ when $n>7$.

Consider that $b$ and $c$ are in the same connected component of $\mathcal{F}$. Then by Lemma~\ref{key} the $k$-edge $\{b,c\}$ is in $\mathcal{F}$.
Moreover, by hypothesis, $k$ must be consecutive with $i$ so let $k=i+1$.

Suppose that $k$ is not consecutive with $j$, then we get two alternating squares as in the following figure.
$$ \xymatrix@-1pc{
     *+[o][F]{a}   \ar@{-}[r]^{i}  \ar@{--}[d]^{j}     & *+[o][F]{b}  \ar@{-}[r]^{i+1} \ar@{--}[d]^{j} & *+[o][F]{c} \ar@{--}[d]^{j} \\
     *+[o][F]{v}  \ar@{--}[r]_i     & *+[o][F]{w}   \ar@{--}[r]_{i+1} &   *+[o][F]{y} }$$
By Lemmas~\ref{square2} and \ref{ic} the vertices $a$, $v$, $w$ and $y$ are in different components.
There exists a vertex $d$ incident to one of the vertices of the previous picture, neither $b$ nor $w$, otherwise we get two non-consecutive incident edges of $\mathcal{F}$. By Lemma~\ref{swap}, we may assume $d$ is incident with $c$.
By hypothesis, the label of $\{c,d\}$ must now be $i+2$. Hence $j=i+3$ for, otherwise, we get a fifth component of $\mathcal{F}$. This gives fracture graph $(H)$.

Now suppose that $k$ is consecutive with both $i$ and $j$, and that all the vertices of the alternating square, except $b$, have degree two in $\mathcal{G}$. Then $\mathcal{F}$ is the graph $(I)$ or is the graph $(I')$ obtained from $(I)$ by interchanging the edge of label $n-6$ that is in $\mathcal{F}$ and the edge of label $n-6$ that is not in $\mathcal{F}$.
Only one of these two graphs needs to be listed in this lemma.
\end{proof}

The permutation graph  $\mathcal{G}$ of $\Gamma$ is \emph{linear} if adjacent edges of have consecutive labels. 
When $\mathcal{G}$ is linear, it is possible to give an ordering to the connected components of $\mathcal{F}$.  
A component of $\mathcal{F}$  is \emph{big} if it has at least four vertices.
We say that a fracture graph is \emph{maximal with respect to a component} if that component cannot become bigger in any other fracture graph.

In the following graphs we sometimes have more than one possibility for the label of an edge. If that is the case we write all possibilities for the label of that edge separated by a vertical bar. If one of the possibilities is a double edge we write a set with two labels instead of a single label.
\begin{proposition}\label{join}
Let $1\leq l \leq 4$, and $n\geq 3+2l$ when $r = n-l$. 
Suppose that $\mathcal{G}$ is linear and $\mathcal{F}$ is a maximal fracture graph of $\mathcal{G}$ with respect to the last big component. Consider two components of $\mathcal{F}$ at distance one in $\mathcal{G}$ as in the following figure.
$$ \ldots\xymatrix@-.3pc{
     *+[o][F]{}   \ar@{-}[r]^i    & *+[o][F]{}  \ar@{--}[r]^k  & *+[o][F]{}  \ar@{-}[r]^j& *+[o][F]{}}\;\ldots (i<j)$$
Then $k=i+1$, $j=i+2$ and  $\mathcal{F}$ has at least 3 components. Moreover, either the component containing the $i$-edge or the component containing the $j$-edge has exactly 3 vertices.
\end{proposition}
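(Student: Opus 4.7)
The plan is to address the three conclusions in turn: first pin down $k$ and $j$ from linearity; next analyze the forced path-structure of $L$, $R$, and the "bridge'' third component; and finally use Lemma~\ref{swap} together with the maximality hypothesis to secure the three-vertex claim.

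First, linearity of $\mathcal{G}$---any two edges sharing a vertex have consecutive labels---applied to the $k$-edge (incident to both the $i$-edge at the boundary vertex $v$ of $L$ and the $j$-edge at the boundary vertex $w$ of $R$) forces $|k-i|=|k-j|=1$; with $i<j$ this gives $k=i+1$ and $j=i+2$. Linearity also implies that no vertex of $\mathcal{G}$ has degree greater than~$2$ (three distinct integer labels cannot be pairwise within $1$), so $\mathcal{G}$ is a single path or cycle, every component of $\mathcal{F}$ is a subpath, and, since the labels inside a component are distinct and consecutive edges differ by $1$, each component's label set is a consecutive integer interval.

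Second, the vertex $v$ already has its two degree slots occupied by the labels $i$ and $i+1$, so $v$ is an endpoint of the $L$-subpath; the monotone walk through $L$ into $v$ therefore gives either the ascending range $\{a,\ldots,i\}$ for $L$ or, when the walk descends, $L=\{i\}$ or $L=\{i,i+1\}$ (forced by disjointness from $R$'s label set, which begins at $i+2$). In the case $L=\{i,i+1\}$ we have $|L|=3$ directly, and symmetrically $R=\{i+1,i+2\}$ gives $|R|=3$; in either sub-case the three-vertex conclusion holds. Otherwise both $L$ and $R$ ascend and neither contains label $i+1$, so the $(i+1)$-edge $e\in\mathcal{F}$ lies in a third component $M$ whose consecutive label set is disjoint from those of $L$ and $R$, forcing $M$'s labels to be exactly $\{i+1\}$ and $|M|=2$; in particular $\mathcal{F}$ has at least three components.

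Finally, in this remaining sub-case I argue by contradiction, assuming $|L|\ne 3$ and $|R|\ne 3$ (so $|L|,|R|\in\{2,4,5,\ldots\}$). Apply Lemma~\ref{swap} to exchange $e$ for the $k$-edge: the required cycle in $\mathcal{G}$ containing exactly the two $(i+1)$-edges $e$ and $k$ is assembled from $k$, $e$, and paths between their endpoints through $\mathcal{G}$ that avoid further $(i+1)$-edges---feasible because $\mathcal{G}$ is a path or cycle of maximum degree $2$ and $M$ sits as a "floating'' two-vertex component whose endpoints are reachable from $\{v,w\}$ via the other generators. In the resulting fracture graph $\mathcal{F}'$, the components $L$ and $R$ are merged by $k$ into a single component of size $|L|+|R|\ge 4$, while $M$ dissolves into two isolated vertices; all other components are unaltered. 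Either $L\cup R$ is the new last big component of $\mathcal{F}'$---and it strictly exceeds the last big component of $\mathcal{F}$ (which would then be $L$, $R$, or something earlier in the ordering)---or the old last big component lies strictly to the right of $R$, in which case one iterates the same swap at successive boundaries further right until the enlargement reaches it; in every case one obtains a fracture graph whose last big component is strictly larger than that of $\mathcal{F}$, contradicting maximality. The main technical obstacle is the rigorous construction of the swap cycle needed by Lemma~\ref{swap} and the bookkeeping for the iteration; both rest on the rigid path/cycle geometry of the linear graph $\mathcal{G}$ and on the structural rigidity imposed by $|M|=2$.
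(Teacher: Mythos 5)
Your opening steps agree with the paper: linearity forces $k=i+1$, $j=i+2$, every vertex of $\mathcal{G}$ has degree at most $2$, and each component of $\mathcal{F}$ is a subpath whose label set is an interval, giving the trichotomy $L=\{i,i+1\}$, $R=\{i+1,i+2\}$, or a two-vertex third component $M$ carrying the $(i+1)$-edge. The difficulties are concentrated in your final step, and they are genuine gaps rather than stylistic differences.

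First, Lemma~\ref{swap} cannot be the mechanism for your exchange. For $n\geq 9$ a connected linear $\mathcal{G}$ has maximum degree $2$ and cannot be a cycle (by Lemma~\ref{ic} every cycle through an edge of $\mathcal{F}$ contains a second edge of the same label, which would force $n\geq 2r\geq 2(n-4)$, i.e.\ $n\leq 8$); hence $\mathcal{G}$ is a path and contains no cycles at all. The cycle you need through $e$ and the $k$-edge is therefore infeasible for exactly the reason you cite as making it feasible. The exchange itself is legitimate, but it must be justified directly from Definition~\ref{def1}: deleting all $(i+1)$-edges from the path separates $v$ from $w$, so $\{v,w\}$ joins distinct $\Gamma_{i+1}$-orbits. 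Second, and more seriously, the maximality contradiction is not closed. If the last big component $B^*$ of $\mathcal{F}$ lies to the left of $L$ (so $|L|,|R|\leq 3$ and, in your remaining sub-case, $|L|=|R|=2$), the merged component $L\cup R$ has only $4$ vertices and need not exceed $|B^*|$, so your claim that it ``strictly exceeds the last big component'' is unjustified there; and if $B^*$ lies to the right of $R$, your iteration of swaps ``at successive boundaries further right'' presupposes that the same configuration recurs, which is not argued. Both situations can be excluded, but only by combining the bound of Lemma~\ref{fracture}(2) (at most $n-r\leq 4$ components, hence very few isolated vertices between components) with the consecutive-label constraints on the dashed edges separating components; this is exactly what the paper does by enumerating the possible sizes $|\mathcal{C}|\in\{2,4,\geq 5\}$ of the component adjacent to the dashed $(i+1)$-edge and exhibiting in each case either too many components or an explicitly larger fracture graph. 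A smaller omission: in the sub-cases $L=\{i,i+1\}$ and $R=\{i+1,i+2\}$ you never verify that $\mathcal{F}$ has at least three components, which is part of the statement and again requires the maximality hypothesis (e.g.\ the two-component configuration with $i=0$ is only excluded because swapping the $1$-edges enlarges the last big component).
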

\begin{proof}
As $\mathcal{G}$ is linear, $k\in\{i-1,i+1\}\cap\{j-1,j+1\}$ with $i<j$. Hence $k=i+1$ and $j=i+2$.
As incident edges are consecutive, the edge with label $i+1$ must belong to a third component having exactly two vertices.

Let $\mathcal{C}$ be the component of $\mathcal{F}$ containing the $(i+2)$-edge. Suppose that the component consisting in a single  $(i+1)$-edge is after $\mathcal{C}$.

First suppose that  $\mathcal{C}$ has  more than 4 vertices.  This forces $\mathcal{F}$ to have at least five components, a contradiction. We show in the following figure what happens when $\mathcal{C}$ has exactly five vertices.
$$ \ldots\xymatrix@-.3pc{
     *+[o][F]{}   \ar@{-}[r]^{i-1}    &*+[o][F]{}   \ar@{-}[r]^i    & *+[o][F]{}  \ar@{--}[r]^{i+1}  & *+[o][F]{}  \ar@{-}[r]^{i+2}& *+[o][F]{}  \ar@{-}[r]^{i+3}&*+[o][F]{}  \ar@{-}[r]^{i+4}&*+[o][F]{}  \ar@{-}[r]^{i+5}&*+[o][F]{}  \ar@{--}[r]^{i+4}&*+[o][F]{}  \ar@{--}[r]^{i+3}&*+[o][F]{}  \ar@{--}[r]^{i+2}&*+[o][F]{}  \ar@{-}[r]^{i+1}&*+[o][F]{}}$$
If  $\mathcal{C}$ has exactly 4 vertices, then $\mathcal{F}$ has 4 components as in the following figure. 
$$ \ldots\xymatrix@-.3pc{
    *+[o][F]{}   \ar@{-}[r]^{i-2}    &*+[o][F]{}   \ar@{-}[r]^{i-1}    &*+[o][F]{}   \ar@{-}[r]^i    & *+[o][F]{}  \ar@{--}[r]^{i+1}  & *+[o][F]{}  \ar@{-}[r]^{i+2}& *+[o][F]{}  \ar@{-}[r]^{i+3}&*+[o][F]{}  \ar@{-}[r]^{i+4} &*+[o][F]{}  \ar@{--}[r]^{i+3}&*+[o][F]{}  \ar@{--}[r]^{i+2}&*+[o][F]{}  \ar@{-}[r]^{i+1}&*+[o][F]{}}$$
In this case $\mathcal{F}$ has two big components, as $n\geq 11$, and none is maximal, a contradiction.
Now suppose that $\mathcal{C}$ has exactly 2 vertices. Then we have one of the following fracture graphs with 4 components.
$$ \ldots\xymatrix@-.3pc{
      *+[o][F]{}   \ar@{-}[r]^{i-2}    &*+[o][F]{}   \ar@{-}[r]^{i-1}    &*+[o][F]{}   \ar@{-}[r]^i    & *+[o][F]{}  \ar@{--}[r]^{i+1}  & *+[o][F]{}  \ar@{-}[r]^{i+2}& *+[o][F]{}  \ar@{--}[r]^{i+1}&*+[o][F]{}  \ar@{--}[r]^{i|i+2} &*+[o][F]{}  \ar@{-}[r]^{i+1}&*+[o][F]{}  }$$
   $$ \ldots\xymatrix@-.3pc{
     *+[o][F]{}   \ar@{-}[r]^{i-2}    &*+[o][F]{}   \ar@{-}[r]^{i-1}    &*+[o][F]{}   \ar@{-}[r]^i    & *+[o][F]{}  \ar@{--}[r]^{i+1}  & *+[o][F]{}  \ar@{-}[r]^{i+2}& *+[o][F]{}  \ar@{--}[r]^{i+3}&*+[o][F]{}  \ar@{-}[r]^{i+4}&*+[o][F]{}  \ar@{-}[r]^{i+3}  &*+[o][F]{}  \ar@{--}[r]^{i+2}&*+[o][F]{}  \ar@{-}[r]^{i+1}&*+[o][F]{}  }$$
In any case we have a contradiction with maximality of the unique big component of $\mathcal{F}$.

If the component being the  $(i+1)$-edge is before $\mathcal{C}$ we get the same contradiction with maximality of the last big component (indeed maximality fails in all big components of $\mathcal{F}$). 
\end{proof}
\begin{lemma}\label{linear}
Let $1\leq l \leq 4$, and $n\geq 3+2l$ when $r = n-l$. 
Suppose that $\mathcal{G}$ is linear, then $\mathcal{G}$ is, up to duality, one of the following graphs, where $0\leq i\leq r-4$. 

\begin{small}
$$\begin{tabular}{c}
\begin{tabular}{|c|}
\hline
$\textbf{r=n-1} $\\
\hline
$\xymatrix@-.2pc{ *+[o][F]{}   \ar@{-}[r]^0  & *+[o][F]{}  \ar@{-}[r]^1    & *+[o][F]{} \ar@{.}[rr] & & *+[o][F]{}  \ar@{-}[r]^{n-3} & *+[o][F]{}  \ar@{-}[r]^{n-2} & *+[o][F]{}}$\\
\hline
\end{tabular}\\
\\
\begin{tabular}{|c|}
\hline
$\textbf{r=n-2} $\\
\hline
$\xymatrix@-.2pc{ *+[o][F]{}   \ar@{--}[r]^1  & *+[o][F]{}  \ar@{-}[r]^0  & *+[o][F]{}  \ar@{-}[r]^1    & *+[o][F]{} \ar@{.}[rr] & & *+[o][F]{}  \ar@{-}[r]^{n-4} & *+[o][F]{}  \ar@{-}[r]^{n-3} & *+[o][F]{}}$\\
\hline
\end{tabular}\\
\\
\begin{tabular}{|c|}
\hline
$\textbf{r=n-3}$\\
\hline
$(K)\; \xymatrix@-.2pc{ *+[o][F]{}   \ar@{--}[r]^1  & *+[o][F]{}  \ar@{-}[r]^0    & *+[o][F]{} \ar@{.}[rr] & & *+[o][F]{}  \ar@{-}[r]^{n-4} & *+[o][F]{}  \ar@{--}[r]^{n-5} & *+[o][F]{}}$\\

$(L)\; \xymatrix@-.2pc{ *+[o][F]{}   \ar@{--}[r]^{0|2|\{0,2\}}  & *+[o][F]{}  \ar@{--}[r]^1    &  *+[o][F]{}   \ar@{-}[r]^0   & *+[o][F]{} \ar@{.}[rr] &&  *+[o][F]{}  \ar@{-}[r]^{n-4} & *+[o][F]{}}$\\

$ (M)\; \xymatrix@-.1pc{ *+[o][F]{}   \ar@{-}[r]^0    & *+[o][F]{}  \ar@{.}[rr]& & *+[o][F]{}  \ar@{-}[r]^{i}  & *+[o][F]{}  \ar@{--}[r]^{i+1} & *+[o][F]{}  \ar@{--}[r]^{i+2|\{i,i+2\}} & *+[o][F]{}  \ar@{-}[r]^{i+1}  & *+[o][F]{}  \ar@{.}[rr] && *+[o][F]{}  \ar@{-}[r]^{n-4}&*+[o][F]{}  \ar@{--}[r]^{n-5}& *+[o][F]{}}$\\
\hline
\end{tabular}\\
\\
\begin{tabular}{|c|}
\hline
$\textbf{r=n-4}$\\
\hline

$(N)\; \xymatrix@-.2pc{ *+[o][F]{}   \ar@{--}[r]^{0|2}  & *+[o][F]{} \ar@{--}[r]^1  &  *+[o][F]{} \ar@{-}[r]^0 & *+[o][F]{} \ar@{.}[rr] && *+[o][F]{}  \ar@{-}[r]^{n-5}& *+[o][F]{}  \ar@{--}[r]^{n-6}  & *+[o][F]{}}$\\

$(O)\;\xymatrix@-.2pc{ *+[o][F]{}   \ar@{-}[r]^0  &  *+[o][F]{} \ar@{.}[rr] && *+[o][F]{}  \ar@{-}[r]^i& *+[o][F]{}  \ar@{--}[r]^{i+1} & *+[o][F]{}  \ar@{--}[r]^{i+2|\{i,i+2\}} & *+[o][F]{}  \ar@{-}[r]^{i+1}  & *+[o][F]{} \ar@{.}[rr]&& *+[o][F]{}  \ar@{-}[r]^{n-5}  & *+[o][F]{}  \ar@{--}[r]^{n-6} & *+[o][F]{}}$\\

\hline

$(P)\;\xymatrix@-.2pc{ *+[o][F]{}   \ar@{--}[r]^1  &*+[o][F]{}   \ar@{--}[r]^{0|2|\{0,2\}}  & *+[o][F]{} \ar@{--}[r]^1  &  *+[o][F]{} \ar@{-}[r]^0  & *+[o][F]{} \ar@{.}[rr] &&  *+[o][F]{}  \ar@{-}[r]^{n-5} & *+[o][F]{}}$\\

$(Q)\;\xymatrix@-.2pc{ *+[o][F]{}   \ar@{--}[r]^{1|3|\{1,3\}}  &*+[o][F]{}   \ar@{--}[r]^2  & *+[o][F]{} \ar@{--}[r]^1  &  *+[o][F]{} \ar@{-}[r]^0& *+[o][F]{}\ar@{.}[rr] && *+[o][F]{}  \ar@{-}[r]^{n-5} & *+[o][F]{}}$\\

$(R)\; \xymatrix@-.2pc{ *+[o][F]{}   \ar@{-}[r]^1  &*+[o][F]{}   \ar@{-}[r]^0  & *+[o][F]{} \ar@{--}[r]^1  &  *+[o][F]{} \ar@{--}[r]^2 & *+[o][F]{} \ar@{--}[r]^{3|\{1,3\}}& *+[o][F]{} \ar@{-}[r]^2   & *+[o][F]{} \ar@{.}[rr] &&  *+[o][F]{}  \ar@{-}[r]^{n-5} & *+[o][F]{}}$\\

\hline

$ (S)\;\xymatrix@-.2pc{ *+[o][F]{}   \ar@{--}[r]^1  &*+[o][F]{}   \ar@{-}[r]^0  & *+[o][F]{} \ar@{--}[r]^1  &  *+[o][F]{} \ar@{--}[r]^{2|\{0,2\}} & *+[o][F]{} \ar@{-}[r]^1&  *+[o][F]{} \ar@{.}[rr] && *+[o][F]{}  \ar@{-}[r]^{n-5} & *+[o][F]{}}$\\

$(T)\; \xymatrix@-.2pc{ *+[o][F]{}   \ar@{-}[r]^1  &*+[o][F]{}   \ar@{-}[r]^0  & *+[o][F]{} \ar@{--}[r]^1  &  *+[o][F]{} \ar@{-}[r]^2 & *+[o][F]{} \ar@{--}[r]^3& *+[o][F]{} \ar@{--}[r]^{4|\{2,4\}}  &*+[o][F]{} \ar@{-}[r]^3  & *+[o][F]{} \ar@{.}[rr] &&  *+[o][F]{}  \ar@{-}[r]^{n-5} & *+[o][F]{}}$\\

\hline

$(U)\; \xymatrix@-.2pc{ *+[o][F]{}   \ar@{--}[r]^1  &*+[o][F]{}   \ar@{-}[r]^0  & *+[o][F]{} \ar@{-}[r]^1  &  *+[o][F]{} \ar@{--}[r]^2 & *+[o][F]{} \ar@{--}[r]^{3|\{1,3\}}& *+[o][F]{} \ar@{-}[r]^2  & *+[o][F]{} \ar@{.}[rr] && *+[o][F]{}  \ar@{-}[r]^{n-5} & *+[o][F]{}}$\\

\hline

$(V)\, \xymatrix@-.2pc{  *+[o][F]{}   \ar@{--}[r]^1  &*+[o][F]{}   \ar@{-}[r]^0    & *+[o][F]{} \ar@{.}[rr] && *+[o][F]{} \ar@{-}[r]^i& *+[o][F]{} \ar@{--}[r]^{i+1}& *+[o][F]{} \ar@{--}[r]^{i+2|\{i,i+2\}}& *+[o][F]{} \ar@{-}[r]^{i+1} &  *+[o][F]{} \ar@{.}[rr] & & *+[o][F]{}  \ar@{-}[r]^{n-5} & *+[o][F]{}}$\\
\hline
\end{tabular}
\end{tabular}$$
\end{small}
\end{lemma}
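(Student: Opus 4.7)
The plan is a case analysis on the rank $r$, classifying the possible shapes of $\mathcal{G}$ by combining the local structure forced by linearity with the constraints on inter-component joins given by Proposition~\ref{join}.

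First, I would exploit linearity to fix the local structure of $\mathcal{G}$. At any vertex, any two non-parallel incident edges must carry consecutive labels, so the set of labels of non-parallel edges at a single vertex has size at most~$2$, and the underlying simple graph has maximum degree~$2$. A parallel bundle of edges between two vertices can carry at most two labels, and if it carries two they must be of the form $\{a,a+2\}$, since any non-parallel neighbour of such a bundle must carry a label consecutive with each of its members. Since the diagram is connected and $\Gamma$ is transitive, $\mathcal{G}$ is connected, so topologically $\mathcal{G}$ is a single path carrying a sequence of label bundles $L_1,\dots,L_{n-1}$, each either a singleton or a pair $\{a,a+2\}$ whose two single-edge neighbours carry the label $a+1$.

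Next, I would turn to the fracture graph. By Lemma~\ref{fracture}, $\mathcal{F}$ is a forest with $n-r$ components, and by linearity each component is a sub-path whose edge labels form a consecutive-integer interval $[a,b]$; these intervals partition $\{0,\dots,r-1\}$. Fixing a fracture graph maximal with respect to its last big component, I would classify the ways in which two components can be joined inside $\mathcal{G}$. There are exactly two possibilities: either the join is of the ``between'' form treated by Proposition~\ref{join} (a non-fracture edge with label $a+1$ bridges components whose touching labels are $a$ and $a+2$, introducing a third component that contains the $\mathcal{F}$-edge of label $a+1$, and forcing one of the two adjacent components to have exactly $3$ vertices); or the join is via a non-fracture edge whose label already appears within one of the two joined components, which is the only option when $\mathcal{F}$ has too few components for Proposition~\ref{join} to apply, namely when $r=n-2$.

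I would then enumerate by rank. For $r=n-1$, $\mathcal{F}$ has one component and the unique possibility is the path with labels $0,1,\dots,n-2$. For $r=n-2$, Proposition~\ref{join} cannot apply, so the single join is of the duplicate-label type and the constraints around the join pin down the graph up to duality. For $r=n-3$ and $r=n-4$, I would iterate over all possible ordered partitions of $\{0,\dots,r-1\}$ into $n-r$ consecutive intervals, applying Proposition~\ref{join} or the duplicate-label analysis at each inter-component edge of $\mathcal{G}$. Whenever a bridging edge carries label $a+1$ between fracture labels $a$ and $a+2$, linearity permits the bundle at the bridge to be an $a$-edge, an $a+2$-edge, or the parallel bundle $\{a,a+2\}$; this explains the ``$|$'' options in the labels of the listed graphs $(K)$--$(V)$. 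The main obstacle will be the bookkeeping in the $r=n-4$ case: with four components of varying sizes and several admissible join patterns, many sub-cases emerge and several pairs of them coincide under duality (reversal of the label order accompanied by reversal of end-decorations); organising the enumeration so that each essentially different configuration is counted once, and verifying completeness against the listed graphs $(N)$--$(V)$, is the technical heart of the proof.
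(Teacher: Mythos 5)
Your overall strategy is the paper's: fix a fracture graph that is maximal with respect to the last big component, observe that its components are sub-paths of the path $\mathcal{G}$ carrying consecutive label intervals, use Proposition~\ref{join} to constrain how components can sit next to one another, and enumerate the component structures rank by rank. Your preliminary local analysis of $\mathcal{G}$ (maximum degree $2$ in the underlying simple graph, parallel bundles necessarily of the form $\{a,a+2\}$ flanked by $(a+1)$-edges) is correct and is implicit in the paper's argument.

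The genuine gap is your claim that there are ``exactly two possibilities'' for a join between consecutive components. A join can also be a non-fracture edge incident to an isolated vertex (or a two-vertex component) whose label's fracture counterpart lies in a \emph{third} component; such a join is neither of the ``between'' form of Proposition~\ref{join}, which requires both joined components to contribute a touching fracture label, nor of your duplicate-label form, since the label occurs in neither of the two joined components. This third kind of join occurs in many of the graphs you are trying to produce: in $(N)$ the leftmost dashed edge with label $0$ or $2$ joins two isolated vertices while the fracture edge of that label sits in the big component, and the same happens for the outer dashed edges of $(L)$, $(P)$ and $(Q)$ and for the two dashed $1$-edges flanking the two-vertex component in $(S)$. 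An enumeration admitting only your two join types would therefore exclude several of the listed graphs and be incomplete. The paper sidesteps this by extracting from Proposition~\ref{join} only the consequence that a component at distance one from a big component is an isolated vertex or has exactly three vertices (together with the maximality condition ruling out a three-vertex neighbour of the last big component), and by letting linearity alone---the join label must be consecutive with the labels of its neighbours along the path---determine the labels of all remaining joins; you need this weaker but correct set of constraints before the rank-by-rank enumeration can be trusted.
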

\begin{proof}
Let us assume that $\mathcal{F}$ is maximal with respect to the last component.

To describe the sizes of the components of a fracture graph we use the following notation:
$B$ for big component, $T$ for a component with 3 vertices, $D$ for a component with 2 vertices and $O$ for an isolated vertex.

By Proposition~\ref{join} a component of  $\mathcal{F}$ that is at distance one of a big component in  $\mathcal{G}$ is either an isolated vertex or has 3 vertices. Therefore the number of big components is one or two (the last case being possible only when $r=n-4$).
Moreover, the last big component cannot be at distance one from a component with 3 vertices, otherwise we get a contradiction with the maximality of $\mathcal{F}$.
Assume without loss of generality that one of the last two components is big (for, otherwise, we may revert the ordering. For instance if we have the sequence $BDTT$ we can change to $TTDB$). 
There is only one possibility when $r=n-1$ (only one component) and also only one possibility, up to duality, when $r=n-2$ (one big component and an isolated vertex).
For $r=n-3$ we have the following  5 possibilities: $OBO$, $OOB$, $DOB$, $TOB$ and $BOB$. In the list given in this lemma the cases $DOB$, $TOB$ and $BOB$ are put together as $i\geq 0$ can be small. Thus we have only 3 possible graphs when $r=n-3$.

For $r=n-4$, there are 10 possibilities for $\mathcal{F}$. We divide the table of all possibilities in five parts according to the five possibilities for the sizes of the last three components (the ones listed for $r=n-3$).

In total we get 12 possibilities. Some cases have a parameter $i\geq 0$. In any case $r-i$ is the size of a big component, thus
$r-i\geq 4$ or equivalently $i\leq r-4$.
\end{proof}

We now summarize in one single proposition the possibilities we obtained for $\mathcal{G}$ in the previous lemmas.

\begin{proposition}\label{allpossi}
Let $1\leq l \leq 4$, and $n\geq 3+2l$ when $r = n-l$. 
If $\Gamma_j$ are intransitive for all $j\in\{0,\ldots,r-1\}$ and $r\geq n-4$
then $\mathcal{G}$ is, up to duality, one of the graphs given in Table~\ref{tab:allpossi}, with $i\geq 0$ and $r-i\geq 4$.
\end{proposition}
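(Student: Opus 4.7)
The plan is a clean trichotomy based on the structure of $\mathcal{G}$, directly combining the case analyses of Lemmas~\ref{dg=3}, \ref{T}, \ref{G1} and \ref{linear}. Since every maximal parabolic subgroup is assumed intransitive, a fracture graph $\mathcal{F}$ of $\Gamma$ exists by Definition~\ref{def1}, and by Lemma~\ref{fracture}(2) the number of connected components of $\mathcal{F}$ equals $n-r \in \{1,2,3,4\}$.

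First, I would split on whether $\mathcal{G}$ is linear in the sense introduced just before Proposition~\ref{join}, that is, whether adjacent edges always carry consecutive labels. If $\mathcal{G}$ is linear, then by Lemma~\ref{linear} we are done: $\mathcal{G}$ is, up to duality, one of the graphs listed there (ranks $n-1$, $n-2$, cases (K)--(M) for rank $n-3$, and cases (N)--(V) for rank $n-4$), all of which appear in Table~\ref{tab:allpossi}. If $\mathcal{G}$ is not linear, then there exist two adjacent edges in $\mathcal{G}$ with non-consecutive labels, and I would further split according to whether some fracture graph of $\Gamma$ also exhibits two adjacent edges with non-consecutive labels.

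In the subcase where some fracture graph has adjacent non-consecutive labels, Lemma~\ref{T} produces a (possibly different) fracture graph $\mathcal{F}$ of $\Gamma$ having a vertex of degree $3$, and then Lemma~\ref{dg=3} pins $\mathcal{F}$ down, up to duality, to one of the graphs (A)--(E). In the complementary subcase (non-linearity of $\mathcal{G}$ but every fracture graph has only consecutive adjacent labels), Lemma~\ref{G1} forces $\mathcal{F}$, up to duality, to be one of (G), (H), (I). In each of these seven concrete situations the dashed edges already drawn in the statements of Lemmas~\ref{dg=3} and \ref{G1} determine the remainder of $\mathcal{G}$: by Lemma~\ref{dg<=3} no further vertex may have degree $>3$, and by Lemma~\ref{key} any additional edge would have to be an edge of $\mathcal{F}$, contradicting maximality. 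Hence $\mathcal{G}$ is exactly the labelled multigraph obtained from $\mathcal{F}$ by adjoining the dashed edges depicted in those lemmas, and these are precisely the entries of Table~\ref{tab:allpossi} corresponding to the non-linear cases.

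Most of the work is already done by the preceding lemmas, so the main task here is bookkeeping: matching each fracture graph appearing in Lemmas~\ref{dg=3}, \ref{G1} and \ref{linear} with the corresponding row of Table~\ref{tab:allpossi}, accounting for dualities and for the alternative labellings (the vertical-bar and double-edge conventions explained just before Proposition~\ref{join}) so that no possibility is missed and none is listed twice. The only genuine obstacle is verifying that the self-dual or near-self-dual entries, such as those in cases $(C)$/$(C')$ and $(E)$/$(E')$ noted inside the proof of Lemma~\ref{dg=3} and the analogous remark at the end of Lemma~\ref{G1}, really do collapse to a single entry in the table; this is handled by the observation already made in those proofs, namely that the dashed edges are forced uniquely once the solid ones are drawn, so swapping the roles of $\mathcal{F}$ and its counterpart does not enlarge the list of possible $\mathcal{G}$.
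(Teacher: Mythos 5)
Your overall architecture is the same trichotomy the paper uses: linear $\mathcal{G}$ handled by Lemma~\ref{linear}; non-linear $\mathcal{G}$ with a non-linear fracture graph handled by Lemmas~\ref{T} and~\ref{dg=3}; non-linear $\mathcal{G}$ all of whose fracture graphs are linear handled by Lemma~\ref{G1}. That part is correct and matches the paper.

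The gap is in your final assembly step, specifically for the Lemma~\ref{G1} cases. You claim that $\mathcal{G}$ is \emph{exactly} the multigraph obtained from $\mathcal{F}$ by adjoining the dashed edges depicted in Lemmas~\ref{dg=3} and~\ref{G1}, justified by Lemma~\ref{dg<=3} and by ``any additional edge would have to be an edge of $\mathcal{F}$ by Lemma~\ref{key}.'' But Lemma~\ref{key} only forbids extra edges joining two vertices in the \emph{same} connected component of $\mathcal{F}$; it says nothing about edges between distinct components, and exactly such edges occur here. Compare $(G)$ as drawn in Lemma~\ref{G1} (single dashed vertical $2$-edges) with $(G)$ in Table~\ref{tab:allpossi} (a vertical \emph{double} edge with labels $1$ and $2$): the table entries $(G)$, $(H1)$, $(H2)$, $(I)$ all carry additional vertical edges that are absent from the pictures in Lemma~\ref{G1}. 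The paper's proof makes the point you are missing: in these cases at least one extra vertical edge \emph{must} be present, since otherwise the endpoints of a dashed vertical $j$-edge would lie in distinct $\Gamma_j$-orbits, so that edge could be chosen into a fracture graph, producing a vertex of degree $3$ and contradicting the hypothesis of Lemma~\ref{G1}; and by Lemma~\ref{ic} only vertical edges can be added, which is what generates the several variants $(H1)$, $(H2)$, etc. As written, your argument would output the Lemma~\ref{G1} pictures verbatim — graphs that are in fact impossible — and would miss the genuine table entries. You need to add the step that enumerates the admissible extra (vertical) edges in the $(G)$, $(H)$, $(I)$ cases, and, for the Lemma~\ref{dg=3} cases, replace the appeal to Lemma~\ref{key} by an argument (the paper uses Lemma~\ref{ic}) that also excludes extra edges between different components of $\mathcal{F}$.
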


\begin{proof}
When $\mathcal{G}$ is not linear we use Lemmas~\ref{dg=3}, \ref{T} and \ref{G1}. By Lemma~\ref{ic} when $\mathcal{G}$ has a fracture graph that is not linear we have that  $\mathcal{G}=\mathcal{F}$. When all fracture graphs of $\Gamma$ are linear but $\mathcal{G}$ is not linear, we have the possibilities given by Lemma~\ref{G1}, and in that case the fracture graphs are subgraphs of $\mathcal{G}$. Indeed in all cases of Lemma~\ref{G1} there is at least one vertical double edge, otherwise $\mathcal{G}$ admits a fracture graph with a vertex of degree 3. By Lemma~\ref{ic} only vertical edges can be added to those graphs.
All the remaining graphs are linear thus are given by Lemma~\ref{linear}.
\end{proof}

\begin{small}
\begin{table}
$$\begin{tabular}{|c|c|}
\hline
$\xymatrix@-.2pc{ *+[o][F]{}   \ar@{-}[r]^0    & *+[o][F]{} \ar@{.}[r] &  *+[o][F]{}  \ar@{-}[r]^{n-2} & *+[o][F]{}}$
&
$\xymatrix@-.2pc{ *+[o][F]{}   \ar@{-}[r]^1  & *+[o][F]{}  \ar@{-}[r]^0  & *+[o][F]{}  \ar@{.}[r]  & *+[o][F]{}  \ar@{-}[r]^{n-3} & *+[o][F]{}}$\\
\hline
\hline
$(A)\; \xymatrix@-1pc{
     *+[o][F]{}   \ar@{-}[d]_2  \ar@{-}[r]^0     & *+[o][F]{}  \ar@{-}[d]^2  \ar@{-}[r]^1  &   *+[o][F]{}  \ar@{-}[r]^2 &  *+[o][F]{} \ar@{-}[r]^3  &*+[o][F]{}\ar@{.}[rr]&& *+[o][F]{}\ar@{-}[r]^{n-4}&*+[o][F]{}\\
     *+[o][F]{}  \ar@{-}[r]_0     & *+[o][F]{}    &   &&&&&&}$
&

$ (G) \xymatrix@-1pc{
     *+[o][F]{}   \ar@{-}[r]^0  \ar@{=}[d]_2^1     & *+[o][F]{}  \ar@{-}[r]^1 \ar@{-}[d]^2   & *+[o][F]{}  \ar@{.}[rr] && *+[o][F]{}  \ar@{-}[r]^{n-4}& *+[o][F]{}  \\
     *+[o][F]{}  \ar@{-}[r]_0     & *+[o][F]{} &    & & &  }$
\\
\hline
$(K)\; \xymatrix@-1pc{ *+[o][F]{}   \ar@{-}[r]^1  & *+[o][F]{}  \ar@{-}[r]^0    & *+[o][F]{} \ar@{.}[rr] & & *+[o][F]{}  \ar@{-}[r]^{n-4} & *+[o][F]{}  \ar@{-}[r]^{n-5} & *+[o][F]{}}$
&

$(L1)\; \xymatrix@-1pc{ *+[o][F]{}   \ar@{-}[r]^0  & *+[o][F]{}  \ar@{-}[r]^1    &  *+[o][F]{}   \ar@{-}[r]^0   & *+[o][F]{} \ar@{.}[r] &  *+[o][F]{}  \ar@{-}[r]^{n-4} & *+[o][F]{}}$
\\
\hline
$(L2)\; \xymatrix@-1pc{ *+[o][F]{}   \ar@{-}[r]^2  & *+[o][F]{}  \ar@{-}[r]^1    &  *+[o][F]{}   \ar@{-}[r]^0   & *+[o][F]{} \ar@{.}[r] &  *+[o][F]{}  \ar@{-}[r]^{n-4} & *+[o][F]{}}$
&
$(L3)\; \xymatrix@-1pc{ *+[o][F]{}   \ar@{=}[r]^2_0  & *+[o][F]{}  \ar@{-}[r]^1    &  *+[o][F]{}   \ar@{-}[r]^0   & *+[o][F]{} \ar@{.}[r] &  *+[o][F]{}  \ar@{-}[r]^{n-4} & *+[o][F]{}}$
\\
\hline
$ (M1)\; \xymatrix@-1pc{ *+[o][F]{}   \ar@{-}[r]^0    & *+[o][F]{}  \ar@{.}[r] & *+[o][F]{}  \ar@{-}[r]^{i}  & *+[o][F]{}  \ar@{-}[r]^{i+1} & *+[o][F]{}  \ar@{-}[r]^{i+2} & *+[o][F]{}  \ar@{-}[r]^{i+1}  & *+[o][F]{}  \ar@{.}[r] & *+[o][F]{}  \ar@{-}[r]^{n-4}&*+[o][F]{}  \ar@{-}[r]^{n-5}& *+[o][F]{}}$
&
$ (M2)\; \xymatrix@-1pc{ *+[o][F]{}   \ar@{-}[r]^0    & *+[o][F]{}  \ar@{.}[r]&  *+[o][F]{}  \ar@{-}[r]^{i}  & *+[o][F]{}  \ar@{-}[r]^{i+1} & *+[o][F]{}  \ar@{=}[r]^{i}_{i+2} & *+[o][F]{}  \ar@{-}[r]^{i+1}  & *+[o][F]{}  \ar@{.}[r] & *+[o][F]{}  \ar@{-}[r]^{n-4}&*+[o][F]{}  \ar@{-}[r]^{n-5}& *+[o][F]{}}$\\
\hline
\hline
$(B)\; \xymatrix@-1pc{
         *+[o][F]{}   \ar@{-}[d]_3  \ar@{-}[r]^0     & *+[o][F]{}  \ar@{-}[d]^3  \ar@{-}[r]^1  & *+[o][F]{} \ar@{-}[d]^3 \ar@{-}[r]^2 & *+[o][F]{} \ar@{-}[r]^3 & *+[o][F]{} \ar@{-}[r]^4& *+[o][F]{} \ar@{.}[r] &*+[o][F]{} \ar@{-}[r]^{n-5}&*+[o][F]{}\\
      *+[o][F]{}  \ar@{-}[r]_0     & *+[o][F]{}  \ar@{-}[r]_1   &  *+[o][F]{}&  &&&&}$
&
$(C)\; \xymatrix@-1pc{
     &*+[o][F]{}   \ar@{-}[d]_2  \ar@{-}[r]^0     & *+[o][F]{}  \ar@{-}[d]^{2}  \ar@{-}[r]^1  & *+[o][F]{}\ar@{-}[r]^2 & *+[o][F]{}\ar@{-}[r]^3 & *+[o][F]{}\ar@{.}[r]& *+[o][F]{}\ar@{-}[r]^{n-5}&*+[o][F]{}\\
    *+[o][F]{}  \ar@{-}[r]_1     & *+[o][F]{}  \ar@{-}[r]_0     & *+[o][F]{}    &   &&&&}$
\\
\hline
$(D) \; \xymatrix@-1pc{
    *+[o][F]{}  \ar@{-}[r]^1 &  *+[o][F]{}   \ar@{-}[d]_2  \ar@{-}[r]^0     & *+[o][F]{}  \ar@{-}[d]^2  \ar@{-}[r]^1  & *+[o][F]{}\ar@{-}[r]^2 & *+[o][F]{}\ar@{-}[r]^3 & *+[o][F]{}\ar@{.}[r]& *+[o][F]{}\ar@{-}[r]^{n-5}&*+[o][F]{}\\
      & *+[o][F]{}  \ar@{-}[r]_0     & *+[o][F]{}    &   &&&&}$
&
$(E)\; \xymatrix@-1pc{
      *+[o][F]{}   \ar@{-}[d]_2  \ar@{-}[r]^0     & *+[o][F]{}  \ar@{-}[d]^2  \ar@{-}[r]^1  & *+[o][F]{}\ar@{-}[r]^2 & *+[o][F]{}\ar@{-}[r]^3 & *+[o][F]{}\ar@{.}[r]& *+[o][F]{}\ar@{-}[r]^{n-5}&*+[o][F]{}\ar@{-}[r]^{n-6} &*+[o][F]{}\\
      *+[o][F]{}  \ar@{-}[r]_0     & *+[o][F]{}    &   &&&&&&&}$
\\
\hline
$ (H1) \xymatrix@-1pc{
     *+[o][F]{}   \ar@{-}[r]^0  \ar@{=}[d]_3^1    & *+[o][F]{}  \ar@{-}[r]^1 \ar@{-}[d]^3 & *+[o][F]{} \ar@{-}[d]^3  \ar@{-}[r]^2 & *+[o][F]{} \ar@{.}[r]&*+[o][F]{} \ar@{-}[r]^{n-5} &*+[o][F]{}\\
     *+[o][F]{}  \ar@{-}[r]_0    & *+[o][F]{}   \ar@{-}[r]_1 &   *+[o][F]{} & &&}$
&
$ (H2) \xymatrix@-1pc{
*+[o][F]{}   \ar@{-}[r]^0  \ar@{=}[d]_3^2    & *+[o][F]{}  \ar@{-}[r]^1 \ar@{=}[d]^3_2 & *+[o][F]{} \ar@{-}[d]^3  \ar@{-}[r]^2 & *+[o][F]{} \ar@{.}[r]&*+[o][F]{} \ar@{-}[r]^{n-5} &*+[o][F]{}\\
*+[o][F]{}  \ar@{-}[r]_0    & *+[o][F]{}   \ar@{-}[r]_1 &   *+[o][F]{} & &&}$
\\
\hline
$ (I) \xymatrix@-1pc{
     *+[o][F]{}   \ar@{-}[r]^0  \ar@{=}[d]_2^1     & *+[o][F]{}  \ar@{-}[r]^1 \ar@{-}[d]^2 & *+[o][F]{}  \ar@{.}[r] &*+[o][F]{}  \ar@{-}[r]^{n-5} &*+[o][F]{}  \ar@{-}[r]^{n-6} &*+[o][F]{}\\
     *+[o][F]{}  \ar@{-}[r]_0     & *+[o][F]{}    & &&&   }$
&
$(N1)\; \xymatrix@-1pc{ *+[o][F]{}   \ar@{-}[r]^0 & *+[o][F]{} \ar@{-}[r]^1  &  *+[o][F]{} \ar@{-}[r]^0 & *+[o][F]{} \ar@{.}[r] & *+[o][F]{}  \ar@{-}[r]^{n-5}& *+[o][F]{}  \ar@{-}[r]^{n-6}  & *+[o][F]{}}$
\\
\hline
$(N2)\; \xymatrix@-1pc{ *+[o][F]{}   \ar@{-}[r]^2  & *+[o][F]{} \ar@{-}[r]^1  &  *+[o][F]{} \ar@{-}[r]^0 & *+[o][F]{} \ar@{.}[r] & *+[o][F]{}  \ar@{-}[r]^{n-5}& *+[o][F]{}  \ar@{-}[r]^{n-6}  & *+[o][F]{}}$
&
$(N3)\; \xymatrix@-1pc{ *+[o][F]{}   \ar@{=}[r]^0_2  & *+[o][F]{} \ar@{-}[r]^1  &  *+[o][F]{} \ar@{-}[r]^0 & *+[o][F]{} \ar@{.}[r] & *+[o][F]{}  \ar@{-}[r]^{n-5}& *+[o][F]{}  \ar@{-}[r]^{n-6}  & *+[o][F]{}}$
\\
\hline
$(O1)\;\xymatrix@-1pc{ *+[o][F]{}   \ar@{-}[r]^0  &  *+[o][F]{} \ar@{.}[r] & *+[o][F]{}  \ar@{-}[r]^i& *+[o][F]{}  \ar@{-}[r]^{i+1} & *+[o][F]{}  \ar@{-}[r]^{i+2} & *+[o][F]{}  \ar@{-}[r]^{i+1}  & *+[o][F]{} \ar@{.}[r]& *+[o][F]{}  \ar@{-}[r]^{n-5}  & *+[o][F]{}  \ar@{-}[r]^{n-6} & *+[o][F]{}}$
&
$(O2)\;\xymatrix@-1pc{ *+[o][F]{}   \ar@{-}[r]^0  &  *+[o][F]{} \ar@{.}[r] & *+[o][F]{}  \ar@{-}[r]^i& *+[o][F]{}  \ar@{-}[r]^{i+1} & *+[o][F]{}  \ar@{=}[r]^{i+2}_i & *+[o][F]{}  \ar@{-}[r]^{i+1}  & *+[o][F]{} \ar@{.}[r]& *+[o][F]{}  \ar@{-}[r]^{n-5}  & *+[o][F]{}  \ar@{-}[r]^{n-6} & *+[o][F]{}}$
\\
\hline
$(P1)\;\xymatrix@-1pc{ *+[o][F]{}   \ar@{-}[r]^1  &*+[o][F]{}   \ar@{-}[r]^0  & *+[o][F]{} \ar@{-}[r]^1  &  *+[o][F]{} \ar@{-}[r]^0  & *+[o][F]{} \ar@{.}[r] &  *+[o][F]{}  \ar@{-}[r]^{n-5} & *+[o][F]{}}$
&
$(P2)\;\xymatrix@-1pc{ *+[o][F]{}   \ar@{-}[r]^1  &*+[o][F]{}   \ar@{-}[r]^2  & *+[o][F]{} \ar@{-}[r]^1  &  *+[o][F]{} \ar@{-}[r]^0  & *+[o][F]{} \ar@{.}[r] &  *+[o][F]{}  \ar@{-}[r]^{n-5} & *+[o][F]{}}$
\\
\hline
$(P3)\;\xymatrix@-1pc{ *+[o][F]{}   \ar@{-}[r]^1  &*+[o][F]{}   \ar@{=}[r]^0_2  & *+[o][F]{} \ar@{-}[r]^1  &  *+[o][F]{} \ar@{-}[r]^0  & *+[o][F]{} \ar@{.}[r] &  *+[o][F]{}  \ar@{-}[r]^{n-5} & *+[o][F]{}}$
&
$(Q1)\;\xymatrix@-1pc{ *+[o][F]{}   \ar@{-}[r]^1  &*+[o][F]{}   \ar@{-}[r]^2  & *+[o][F]{} \ar@{-}[r]^1  &  *+[o][F]{} \ar@{-}[r]^0& *+[o][F]{}\ar@{.}[r] & *+[o][F]{}  \ar@{-}[r]^{n-5} & *+[o][F]{}}$
\\
\hline
$(Q2)\;\xymatrix@-1pc{ *+[o][F]{}   \ar@{-}[r]^3 &*+[o][F]{}   \ar@{-}[r]^2  & *+[o][F]{} \ar@{-}[r]^1  &  *+[o][F]{} \ar@{-}[r]^0& *+[o][F]{}\ar@{.}[r] & *+[o][F]{}  \ar@{-}[r]^{n-5} & *+[o][F]{}}$
&
$(Q3)\;\xymatrix@-1pc{ *+[o][F]{}   \ar@{=}[r]^1_3  &*+[o][F]{}   \ar@{-}[r]^2  & *+[o][F]{} \ar@{-}[r]^1  &  *+[o][F]{} \ar@{-}[r]^0& *+[o][F]{}\ar@{.}[r] & *+[o][F]{}  \ar@{-}[r]^{n-5} & *+[o][F]{}}$
\\
\hline
$(R1)\; \xymatrix@-1pc{ *+[o][F]{}   \ar@{-}[r]^1  &*+[o][F]{}   \ar@{-}[r]^0  & *+[o][F]{} \ar@{-}[r]^1  &  *+[o][F]{} \ar@{-}[r]^2 & *+[o][F]{} \ar@{-}[r]^3& *+[o][F]{} \ar@{-}[r]^2   & *+[o][F]{} \ar@{.}[r] &  *+[o][F]{}  \ar@{-}[r]^{n-5} & *+[o][F]{}}$
&
$(R2)\; \xymatrix@-1pc{ *+[o][F]{}   \ar@{-}[r]^1  &*+[o][F]{}   \ar@{-}[r]^0  & *+[o][F]{} \ar@{-}[r]^1  &  *+[o][F]{} \ar@{-}[r]^2 & *+[o][F]{} \ar@{=}[r]^3_1& *+[o][F]{} \ar@{-}[r]^2   & *+[o][F]{} \ar@{.}[r] &  *+[o][F]{}  \ar@{-}[r]^{n-5} & *+[o][F]{}}$
\\
\hline
$ (S1)\;\xymatrix@-1pc{ *+[o][F]{}   \ar@{-}[r]^1  &*+[o][F]{}   \ar@{-}[r]^0  & *+[o][F]{} \ar@{-}[r]^1  &  *+[o][F]{} \ar@{-}[r]^2 & *+[o][F]{} \ar@{-}[r]^1&  *+[o][F]{} \ar@{.}[r] & *+[o][F]{}  \ar@{-}[r]^{n-5} & *+[o][F]{}}$
&
$ (S2)\;\xymatrix@-1pc{ *+[o][F]{}   \ar@{-}[r]^1  &*+[o][F]{}   \ar@{-}[r]^0  & *+[o][F]{} \ar@{-}[r]^1  &  *+[o][F]{} \ar@{=}[r]^0_2 & *+[o][F]{} \ar@{-}[r]^1&  *+[o][F]{} \ar@{.}[r] & *+[o][F]{}  \ar@{-}[r]^{n-5} & *+[o][F]{}}$
\\
\hline
$(T1)\; \xymatrix@-1pc{ *+[o][F]{}   \ar@{-}[r]^1  &*+[o][F]{}   \ar@{-}[r]^0  & *+[o][F]{} \ar@{-}[r]^1  &  *+[o][F]{} \ar@{-}[r]^2 & *+[o][F]{} \ar@{-}[r]^3& *+[o][F]{} \ar@{-}[r]^4  &*+[o][F]{} \ar@{-}[r]^3  & *+[o][F]{} \ar@{.}[r] &  *+[o][F]{}  \ar@{-}[r]^{n-5} & *+[o][F]{}}$
&
$(T2)\; \xymatrix@-1pc{ *+[o][F]{}   \ar@{-}[r]^1  &*+[o][F]{}   \ar@{-}[r]^0  & *+[o][F]{} \ar@{-}[r]^1  &  *+[o][F]{} \ar@{-}[r]^2 & *+[o][F]{} \ar@{-}[r]^3& *+[o][F]{} \ar@{=}[r]^4_2  &*+[o][F]{} \ar@{-}[r]^3  & *+[o][F]{} \ar@{.}[r] &  *+[o][F]{}  \ar@{-}[r]^{n-5} & *+[o][F]{}}$
\\
\hline
$(U1)\; \xymatrix@-1pc{ *+[o][F]{}   \ar@{-}[r]^1  &*+[o][F]{}   \ar@{-}[r]^0  & *+[o][F]{} \ar@{-}[r]^1  &  *+[o][F]{} \ar@{-}[r]^2 & *+[o][F]{} \ar@{-}[r]^3& *+[o][F]{} \ar@{-}[r]^2  & *+[o][F]{} \ar@{.}[r] & *+[o][F]{}  \ar@{-}[r]^{n-5} & *+[o][F]{}}$
&
$(U2)\; \xymatrix@-1pc{ *+[o][F]{}   \ar@{-}[r]^1  &*+[o][F]{}   \ar@{-}[r]^0  & *+[o][F]{} \ar@{-}[r]^1  &  *+[o][F]{} \ar@{-}[r]^2 & *+[o][F]{} \ar@{=}[r]^1& *+[o][F]{} \ar@{-}[r]^2  & *+[o][F]{} \ar@{.}[r] & *+[o][F]{}  \ar@{-}[r]^{n-5} & *+[o][F]{}}$
\\
\hline
$(V1)\, \xymatrix@-1pc{  *+[o][F]{}   \ar@{-}[r]^1  &*+[o][F]{}   \ar@{-}[r]^0    & *+[o][F]{} \ar@{.}[r] & *+[o][F]{} \ar@{-}[r]^i& *+[o][F]{} \ar@{-}[r]^{i+1}& *+[o][F]{} \ar@{-}[r]^{i+2}& *+[o][F]{} \ar@{-}[r]^{i+1} &  *+[o][F]{} \ar@{.}[r] &  *+[o][F]{}  \ar@{-}[r]^{n-5} & *+[o][F]{}}$
&
$(V2)\, \xymatrix@-1pc{  *+[o][F]{}   \ar@{-}[r]^1  &*+[o][F]{}   \ar@{-}[r]^0    & *+[o][F]{} \ar@{.}[r] & *+[o][F]{} \ar@{-}[r]^i& *+[o][F]{} \ar@{-}[r]^{i+1}& *+[o][F]{} \ar@{=}[r]^i_{i+2} & *+[o][F]{} \ar@{-}[r]^{i+1} &  *+[o][F]{} \ar@{.}[r] &  *+[o][F]{}  \ar@{-}[r]^{n-5} & *+[o][F]{}}$\\
\hline
\end{tabular}$$
\caption{Possible CPR graphs of rank $\geq n-4$  string C-groups \label{tab:allpossi}}
\end{table}
\end{small}
\section{The Intersection Property}\label{section6}

In Proposition~\ref{allpossi}, 38 families of permutation representation graphs of rank $r\geq n-4$ were constructed for transitive permutation groups of degree $n$.  In what follows we refer to those graphs by their label in Table~\ref{tab:allpossi}.  For each of these graphs we need to check which ones yield string C-groups, i.e., which corresponding groups satisfy the intersection property. 
The result of this process is the classification (up to isomorphism and duality) of all string C-groups of rank $r\geq n-4$ obtained from a permutation group of degree $n$ with only intransitive maximal parabolic subgroups.  The proofs of many of the cases that yield string C-groups are similar; they are often inductive, and all make use of the following two lemmas.

\begin{lemma}\cite{flm}
\label{max}
Let $\Gamma = \langle \rho_0,...,\rho_{r-1} \rangle$ be a sggi.  If  $\Gamma_0:=\langle \rho_1,...,\rho_{r-1} \rangle \textrm{ and } \Gamma_{r-1}:=\langle \rho_0,...,\rho_{r-2} \rangle$ are string C-groups, $\rho_{r-1} \not \in \Gamma_{r-1}$, and $\langle \rho_1,...,\rho_{r-2} \rangle$ is maximal in $\Gamma_0$, then $\Gamma$ is itself a string C-group.
\end{lemma}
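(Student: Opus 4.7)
The plan is to reduce the full intersection property for $\Gamma$ to a single ``key'' intersection, namely $\Gamma_0 \cap \Gamma_{r-1} = \langle \rho_1,\ldots,\rho_{r-2}\rangle$, and then derive this key intersection from the maximality hypothesis together with the condition $\rho_{r-1} \notin \Gamma_{r-1}$. The reduction is a standard fact for sggi's with a string diagram: assuming both $\Gamma_0$ and $\Gamma_{r-1}$ already satisfy the intersection property, one can show that for any $J,K \subseteq \{0,\ldots,r-1\}$ the equality $\langle \rho_j \mid j\in J\rangle \cap \langle \rho_k \mid k\in K\rangle = \langle \rho_j \mid j\in J\cap K\rangle$ follows once we know $\Gamma_0 \cap \Gamma_{r-1} = \langle \rho_1,\ldots,\rho_{r-2}\rangle$. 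The idea is that if $0\notin J$ or $0\notin K$ we land inside $\Gamma_0$ and use its intersection property; symmetrically for $r-1$; the only genuinely new case is $0\in J$, $r-1 \in K$ (or vice versa), which reduces via $\Gamma_0 \cap \Gamma_{r-1} = \Gamma_{0,r-1}$ to a computation inside $\Gamma_0$ (or $\Gamma_{r-1}$), handled again by the intersection property there.

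Granted this reduction, the first step of the proof is to verify the inclusion $\langle \rho_1,\ldots,\rho_{r-2}\rangle \subseteq \Gamma_0 \cap \Gamma_{r-1}$, which is immediate since every generator of $\langle \rho_1,\ldots,\rho_{r-2}\rangle$ appears in both $\Gamma_0$ and $\Gamma_{r-1}$. For the reverse inclusion, I would observe that $\Gamma_0 \cap \Gamma_{r-1}$ is a subgroup of $\Gamma_0$ sitting between $\langle \rho_1,\ldots,\rho_{r-2}\rangle$ and $\Gamma_0$ itself. By the maximality hypothesis, $\langle \rho_1,\ldots,\rho_{r-2}\rangle$ is maximal in $\Gamma_0$, so this intermediate subgroup must equal one of the two endpoints.

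The last step is to rule out the possibility $\Gamma_0 \cap \Gamma_{r-1} = \Gamma_0$. If this held, then in particular $\rho_{r-1} \in \Gamma_0 \cap \Gamma_{r-1} \subseteq \Gamma_{r-1}$, contradicting the hypothesis $\rho_{r-1}\notin \Gamma_{r-1}$. Hence $\Gamma_0 \cap \Gamma_{r-1} = \langle \rho_1,\ldots,\rho_{r-2}\rangle$, the key intersection holds, and combining this with the intersection properties of $\Gamma_0$ and $\Gamma_{r-1}$ via the reduction above yields the full intersection property for $\Gamma$; since the diagram of $\Gamma$ is automatically a string diagram, $\Gamma$ is a string C-group.

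The main obstacle, and the only nontrivial content, is the reduction from the full intersection property to the single identity $\Gamma_0 \cap \Gamma_{r-1} = \langle \rho_1,\ldots,\rho_{r-2}\rangle$; everything else is a one-line application of maximality and of the hypothesis $\rho_{r-1}\notin \Gamma_{r-1}$. Since this reduction is well known (appearing for example as Proposition~2E16 of McMullen--Schulte and invoked in \cite{flm}), the proof can reasonably be structured as a short argument that cites or reproves the reduction and then performs the two-line maximality argument.
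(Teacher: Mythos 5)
Your proof is correct and is essentially the argument of the cited source \cite{flm} (which in turn rests on Proposition 2E16 of McMullen--Schulte for the reduction to the single intersection $\Gamma_0\cap\Gamma_{r-1}=\langle\rho_1,\ldots,\rho_{r-2}\rangle$); the present paper only quotes the lemma without reproving it. The maximality dichotomy and the exclusion of $\Gamma_0\cap\Gamma_{r-1}=\Gamma_0$ via $\rho_{r-1}\notin\Gamma_{r-1}$ are exactly the standard steps.
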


\begin{lemma}\cite{flm2}
\label{se0}
If $\Gamma=\left<\rho_i\,|\, i=0,\,\ldots,\,r-1\right>$ and $\Gamma^*=\langle \rho_i \tau^{\eta_i}\,|\, i\in \{0,\,\ldots,\,r-1\} \rangle$ is a sesqui-extension of $\Gamma$ with respect to $\rho_k$, then:
\begin{enumerate}
\item $ \Gamma^* \cong \Gamma$ or $\Gamma^* \cong \Gamma\times \langle \tau\rangle\cong\Gamma\times 2$.
\item if there is an element of $\Gamma$ which is written with an odd number of $\rho_k$'s and is equal to the identity, then $\Gamma^*\cong\Gamma\times \langle \tau\rangle$.
\item if $\Gamma$ is a permutation group, $\tau$ and $\rho_k$ are odd permutations, and all other $\rho_i$ are even permutations, then $\Gamma^*\cong\Gamma$.
\item whenever $\tau \notin \Gamma^*$, $\Gamma$ is a string C-group if and only if $\Gamma^*$ is a string C-group.
\end{enumerate}
\end{lemma}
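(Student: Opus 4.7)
The plan is to do everything inside the enlarged group $\tilde{\Gamma} := \Gamma \times \langle \tau \rangle$, in which the sesqui-extension $\Gamma^*$ sits as the subgroup generated by $\rho_0,\ldots,\rho_{k-1},\rho_k\tau,\rho_{k+1},\ldots,\rho_{r-1}$. The central observation will be that the projection $\pi:\tilde{\Gamma}\to\Gamma$ killing $\tau$ restricts to a surjection $\Gamma^*\to\Gamma$ (each generator $\rho_i\tau^{\eta_i}$ maps to $\rho_i$), with kernel $\Gamma^*\cap\langle\tau\rangle$. Since $\langle\tau\rangle$ has order $2$, this kernel is either trivial, in which case $\Gamma^*\cong\Gamma$, or equal to $\langle\tau\rangle$, in which case $\tau\in\Gamma^*$; then also $(\rho_k\tau)\tau=\rho_k\in\Gamma^*$, so $\Gamma^*$ contains all of $\Gamma$ together with $\tau$ and equals $\tilde{\Gamma}\cong\Gamma\times 2$. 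This establishes part (1), and the same dichotomy drives everything that follows.

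For part (2), if $w(\rho_0,\ldots,\rho_{r-1})=\varepsilon$ is a relation in $\Gamma$ involving $\rho_k$ an odd number of times, then substituting $\rho_i\tau^{\eta_i}$ for $\rho_i$ and using the centrality of $\tau$ yields $w(\rho_0\tau^{\eta_0},\ldots,\rho_{r-1}\tau^{\eta_{r-1}})=\varepsilon\cdot\tau^{\text{odd}}=\tau$ in $\Gamma^*$, so $\tau\in\Gamma^*$ and (1) forces $\Gamma^*\cong\Gamma\times\langle\tau\rangle$. For part (3), each generator $\rho_i\tau^{\eta_i}$ of $\Gamma^*$ is an even permutation (for $i\ne k$ this is the hypothesis on $\rho_i$; for $i=k$ it is the product of two odd permutations), so $\Gamma^*\le A_n$ while $\tau$ is odd, whence $\tau\notin\Gamma^*$ and (1) yields $\Gamma^*\cong\Gamma$.

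Finally, part (4) will follow from the fact that when $\tau\notin\Gamma^*$, the restriction of $\pi$ is an explicit isomorphism $\Gamma^*\to\Gamma$ sending each $\rho_i\tau^{\eta_i}$ to $\rho_i$. Under this bijection of ordered generating sets the commuting property transfers automatically, because $(\rho_i\tau^{\eta_i})(\rho_j\tau^{\eta_j})$ and $\rho_i\rho_j$ have the same order when $\tau$ is a central involution and the exponents lie in $\{0,1\}$; the intersection property transfers because, for any $J\subseteq\{0,\ldots,r-1\}$, the subgroup of $\Gamma^*$ generated by $\{\rho_j\tau^{\eta_j}\mid j\in J\}$ is mapped bijectively by $\pi$ onto $\langle\rho_j\mid j\in J\rangle$, so intersections correspond on both sides. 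The only genuinely substantive step is the dichotomy in part (1); once that is pinned down, (2), (3) and (4) reduce to short parity and centrality arguments.
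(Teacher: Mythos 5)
The paper does not prove this lemma at all---it is quoted from \cite{flm2}---but your argument is correct and is essentially the one given in that reference: the dichotomy on whether $\tau$ lies in $\Gamma^*$, decided via the projection $\Gamma\times\langle\tau\rangle\to\Gamma$ restricted to $\Gamma^*$, with parts (2)--(4) falling out as corollaries. One small caveat: your parenthetical claim that $(\rho_i\tau^{\eta_i})(\rho_j\tau^{\eta_j})$ and $\rho_i\rho_j$ have the same order does not follow from centrality alone (if exactly one of $i,j$ equals $k$ and $\rho_i\rho_j$ has odd order $p$, then $\rho_i\rho_j\tau$ has order $2p$), but this is harmless here, since the commuting property only involves squares, which coincide exactly because $\tau^{2(\eta_i+\eta_j)}=\varepsilon$, and under the hypothesis $\tau\notin\Gamma^*$ of part (4) the restriction of $\pi$ is an isomorphism and therefore preserves all orders anyway.
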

It is known that the ranks $n-1$ and $n-2$ of Table ~\ref{tab:allpossi} yield string C-groups, we now deal with ranks $n-3$ and $n-4$ separately.
\subsection{Rank $n-3$}
First let us consider the permutation representation graphs from the previous section which have rank $n-3$ with $n\geq 9$.

\begin{proposition}\label{badn-3}
None of the string groups generated by involutions $\Gamma$, described by the permutation representation graphs $(M1)$ nor $(M2, \  i \geq 1)$, are C-groups.
\end{proposition}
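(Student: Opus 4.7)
My plan is to show the intersection property fails for the sggi $\Gamma$ described by $(M1)$ (for all valid $i$) and $(M2)$ for $i\geq 1$. The key structural feature common to both cases is the ``bump'' in the middle of the CPR graph: both $\rho_{i+1}$ and $\rho_{i+2}$ are products of two disjoint transpositions, with one transposition inside the bump region and the second extending into the linear continuation to the right. A direct calculation shows that in $(M1)$ one has $\rho_{i+1}\rho_{i+2}$ acting as a $5$-cycle on $\{V_{i+2},V_{i+3},V_{i+4},V_{i+5},V_{i+6}\}$, so $\langle\rho_{i+1},\rho_{i+2}\rangle\cong D_5$ of order $10$, whence the Coxeter-style relation forces $m_{i+1,i+2}=5$.

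First I would write out all generators explicitly as permutations from the CPR graph and record the orbit structure of the relevant parabolic subgroups: $\langle\rho_0,\ldots,\rho_i\rangle\cong S_{i+2}$ acting on $\{V_1,\ldots,V_{i+2}\}$, the bump group $\langle\rho_{i+1},\rho_{i+2}\rangle\cong D_5$ acting on $\{V_{i+2},\ldots,V_{i+6}\}$, and the right-hand generators $\rho_{i+3},\ldots,\rho_{r-1}$ forming a similar string structure ending in a hook that gives $\rho_{n-5}$ two transpositions. For $(M2,i\geq 1)$, the double edge $\{i,i+2\}$ additionally gives $\rho_i$ a second transposition, which compounds the issue described below.

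The core of the argument is to exhibit an explicit violation of the intersection property. I would choose parabolic pairs $(I,J)$ whose index intersection is internal to the bump (e.g.\ $I=\{0,\ldots,i+1\}$ and $J=\{i+1,\ldots,r-1\}$, or $I=\{0,\ldots,i,i+2\}$ and $J=\{i+1,\ldots,r-1\}$), and then use the refined orbit argument --- intersecting the orbit partitions of $\langle\rho_I\rangle$ and $\langle\rho_J\rangle$ --- to pinpoint a non-identity element in $\langle\rho_I\rangle\cap\langle\rho_J\rangle$ that does not lie in $\langle\rho_{I\cap J}\rangle$. The candidate element arises from combining the $D_5$-structure with the surrounding string generators: because the second transposition of $\rho_{i+2}$ couples the bump to the continuation, certain elements produced by conjugating $\rho_{i+2}$ with right-tail generators $\rho_{i+3},\ldots$ can be matched by elements of the left-tail $S_{i+2}$ acting across the bump, producing an element of the forbidden intersection.

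The main obstacle is the explicit identification and verification of the violating element, since the $D_5$ structure gives many subgroup elements with overlapping supports and the bookkeeping of vertex stabilizers is delicate (one must check carefully which elements of $D_5$ fix $V_{i+2}$ versus $V_{i+6}$, etc.). The hypothesis $i\geq 1$ for $(M2)$ is essential: when $i=0$ the double edge $\{0,2\}$ merges with the initial $0$-edge so that $\rho_0$'s interaction with the bump becomes degenerate and the violation evaporates, whereas for $i\geq 1$ the nontrivial left-tail $\langle\rho_0,\ldots,\rho_{i-1}\rangle$ combines with the double-edge structure to force the incompatibility.
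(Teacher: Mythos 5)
Your opening observation is the right one --- $\langle\rho_{i+1},\rho_{i+2}\rangle$ is dihedral of order $10$ acting on the five points common to the supports of those two generators --- and your choice of parabolic pairs is in the same spirit as the paper's. But your proof stops exactly where the content is: you defer ``the explicit identification and verification of the violating element'' as the main obstacle, and propose a delicate element-chasing/orbit-refinement computation that you do not carry out. As written, this is a plan rather than a proof, and the plan points toward unnecessary work. No explicit element is needed. The paper takes $\Gamma_{\leq i+2}=\langle\rho_0,\ldots,\rho_{i+2}\rangle$ and $\Gamma_{\geq i+1}=\langle\rho_{i+1},\ldots,\rho_{r-1}\rangle$ and observes that each is the \emph{full symmetric group} on its support (each restricted graph is connected and the group contains a transposition). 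Since an element of $\mathrm{Sym}(A)\cap\mathrm{Sym}(B)$ fixes everything outside $A\cap B$, the intersection is the full symmetric group on the five common points, i.e.\ $S_5$ of order $120$, while the intersection property would force it to be $\langle\rho_{i+1},\rho_{i+2}\rangle\cong D_5$ of order $10$. That single structural remark replaces all of your proposed bookkeeping.

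Two further points. First, your suggested pair $I=\{0,\ldots,i+1\}$, $J=\{i+1,\ldots,r-1\}$ is workable but strictly harder: the restricted graph for $\langle\rho_I\rangle$ is disconnected (the second transposition of $\rho_{i+1}$ lies in a separate component), so $\langle\rho_I\rangle$ is not a symmetric group and you would have to identify it as a direct product before locating the offending element such as the lone transposition $(v_{i+2}\,u)$; extending $I$ to $\{0,\ldots,i+2\}$, as the paper does, restores transitivity on the support and makes the argument one line. Second, your explanation of why $i=0$ is excluded for $(M2)$ is imprecise: nothing ``merges.'' The correct reason, which the paper states, is that for $i=0$ the subgroup $\Gamma_{\leq i+1}=\langle\rho_0,\rho_1\rangle$ is itself only the dihedral group of order $10$ rather than a symmetric group on its support, so the symmetric-group intersection argument genuinely fails there (and indeed $(M2,i=0)$ \emph{is} a string C-group, by Proposition~\ref{M2i=0 }). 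You should make sure your argument visibly uses $i\geq1$ at that point rather than attributing the failure to a vague degeneration.
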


\begin{proof}
Let us first consider the group $\Gamma:=\langle \rho_0, \ldots, \rho_{r-1} \rangle$ corresponding to the graph $(M1)$.  For this group consider the intersection $\Gamma_{\leq i+2}$ and $\Gamma_{\geq i+1}$.  If the intersection condition held, then they would intersect in a dihedral group $\langle \rho_{i+1}, \rho_{i+2} \rangle$ of order ten acting on the five points in the support of those two generators.  However, $\Gamma_{\leq i+2}$ and $\Gamma_{\geq i+1}$ are both symmetric groups, and contain the symmetric group acting on these five points.  Therefore $\Gamma_{\leq i+2} \cap \Gamma_{\geq i+1} \cong S_5$, and thus $\Gamma$ is not a C-group.

Now consider the group $(M2)$ with $i\geq 1$. In this case we have $\Gamma_{\leq i+1}\cap\Gamma_{\geq i}\neq \Gamma_{i,i+1}$ by the same reasoning as before. Note that in the case $i=0$ needs to be treated separately as $\Gamma_{\leq i+1}$ is not the symmetric group any more.
\end{proof}

\begin{proposition}\label{L1}
The string group generated by involutions $\Gamma$, described by the permutation representation graph $(L1)$, is a C-group.

\end{proposition}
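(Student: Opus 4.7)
The plan is to prove the statement by induction on $n \geq 9$, with Lemma~\ref{max} as the main tool. Label the vertices of $(L1)$ by $1,\ldots,n$ so that $\rho_0 = (1,2)(3,4)$, $\rho_1 = (2,3)(4,5)$, and $\rho_i = (i+3, i+4)$ for $2 \leq i \leq n-4$. The key point is that I need to prove simultaneously, by induction on $n$, that $\Gamma$ is a string C-group \emph{and} that its maximal parabolic subgroup $\Gamma_0 = \langle \rho_1,\ldots,\rho_{n-4}\rangle$ is a string C-group; the latter statement is needed as input when Lemma~\ref{max} is applied to $\Gamma$. The base case $n = 9$ can be verified by a direct (small) computation.

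For the inductive step I would first treat $\Gamma_0$ by applying Lemma~\ref{max} to it. The subgroup $(\Gamma_0)_0 = \langle \rho_2,\ldots,\rho_{n-4}\rangle$ consists of adjacent transpositions on $\{5,6,\ldots,n\}$ and is the standard Coxeter representation of $S_{n-4}$, hence a string C-group; $(\Gamma_0)_{n-5} = \langle \rho_1,\ldots,\rho_{n-5}\rangle$ has exactly the same form as $\Gamma_0$ for $(L1)$ on $n-1$ vertices, so is a string C-group by induction; $\rho_{n-4} = (n-1,n)$ is not in this subgroup since the latter fixes $n$; and $\langle \rho_2,\ldots,\rho_{n-5}\rangle \cong S_{n-5}$ is maximal in $S_{n-4}$ for $n \geq 9$. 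So $\Gamma_0$ is a string C-group. Then I would apply Lemma~\ref{max} to $\Gamma$ itself: $\Gamma_0$ is a string C-group (just shown), $\Gamma_{n-4} = \langle \rho_0,\ldots,\rho_{n-5}\rangle$ realises $(L1)$ on $n-1$ vertices and hence is a string C-group by induction, and $\rho_{n-4} \notin \Gamma_{n-4}$ because $\Gamma_{n-4}$ fixes $n$.

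The remaining hypothesis of Lemma~\ref{max}, namely that $\langle \rho_1,\ldots,\rho_{n-5}\rangle$ is maximal in $\Gamma_0$, is the main obstacle. I would handle it by identifying $\Gamma_0 \cong S_2 \times S_{n-3}$ acting on its two orbits $\{2,3\}$ and $\{4,\ldots,n\}$; the computation $(\rho_1\rho_2)^3 = (2,3)$ shows that $(2,3)$ lies in $\Gamma_0$, from which one sees that the full direct product is attained. Under this identification $\langle \rho_1,\ldots,\rho_{n-5}\rangle$ corresponds to the stabiliser $S_2 \times S_{n-4}$ of the point $n$. For any subgroup $K$ with $S_2 \times S_{n-4} < K \leq S_2 \times S_{n-3}$, the intersection $K \cap (\{e\} \times S_{n-3})$ is a subgroup of $S_{n-3}$ properly containing $S_{n-4}$, hence equals $S_{n-3}$ by maximality of $S_{n-4}$ in $S_{n-3}$ (valid for $n \geq 8$); then $K$ contains $\{e\} \times S_{n-3}$ together with $S_2 \times S_{n-4}$, forcing $K = S_2 \times S_{n-3}$. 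Thus Lemma~\ref{max} applies and $\Gamma$ is a string C-group, completing the induction.
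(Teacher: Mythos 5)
Your proof is correct, but it takes a genuinely different route from the paper, whose entire proof of this proposition is a one-line citation of Theorem~3 of~\cite{fl}: the path with label sequence $0,1,0,1,2,\ldots,n-4$ is exactly one of the rank-$(n-3)$ string C-group representations of $S_n$ already constructed there. Your argument is instead self-contained and follows the template the paper uses for $(L2)$, $(L3)$, $(A)$ and $(G)$: induction on $n$ via Lemma~\ref{max}, with the nontrivial inputs being the identification $\Gamma_0\cong S_2\times S_{n-3}$ (via $(\rho_1\rho_2)^3=(2,3)$) and the maximality of $\Gamma_{0,r-1}\cong S_2\times S_{n-4}$ in $\Gamma_0$. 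You correctly flag the point most easily overlooked, namely that the induction hypothesis must carry the claim about $\Gamma_0$ alongside the claim about $\Gamma$, since $\Gamma_0$ is not a parabolic subgroup of $\Gamma_{r-1}$; and your maximality argument (strip off the $S_2$-component using $S_2\times\{e\}\le K$, then invoke maximality of the point stabiliser $S_{n-4}$ in $S_{n-3}$) is sound. One possible streamlining, consistent with the paper's style elsewhere: $\Gamma_0$ is precisely a sesqui-extension of the simplex $\langle(4,5),(5,6),\ldots,(n-1,n)\rangle\cong S_{n-3}$ with respect to its first generator and $\tau=(2,3)$, so Lemma~\ref{se0} (as invoked for $\Gamma_{0,1}$ in the proofs for $(L2)$ and $(G)$) would replace your inner induction. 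What your approach buys is independence from~\cite{fl}; what the paper's buys is brevity.
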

\begin{proof}
This follows directly from Theorem 3 of~\cite{fl}.
\end{proof}

\begin{proposition}\label{L2}
The sggi $\Gamma$, described by the permutation representation graph $(L2)$, is a C-group.
\end{proposition}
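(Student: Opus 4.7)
The plan is to prove the statement by induction on $n$, applying Lemma~\ref{max} with $\rho_{n-4}$ as the distinguished last generator. Labeling the vertices of the graph $(L2)$ from left to right as $1,\ldots,n$, the generators are $\rho_0=(3,4)$, $\rho_1=(2,3)(4,5)$, $\rho_2=(1,2)(5,6)$, and $\rho_j=(j+3,j+4)$ for $3\le j\le n-4$. The base case $n=9$, which is the smallest value permitted by the hypothesis $n\ge 3+2\cdot 3=9$, is handled by a direct inspection of the intersections of maximal parabolic subgroups.

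For the inductive step, put $\Lambda=\Gamma_{n-4}$ and $\Omega=\Gamma_0$. Removing the $(n-4)$-edge from $(L2)$ gives the graph $(L2)$ on $n-1$ vertices, so by induction $\Lambda$ is a string C-group. Removing the $0$-edge disconnects the CPR graph into a component on $\{1,2,3\}$ (on which $\rho_1,\rho_2$ restrict to the Coxeter generators $(2,3),(1,2)$ of $S_3$ and the remaining generators act trivially) and a component on $\{4,\ldots,n\}$ (on which the restrictions yield the standard adjacent-transposition Coxeter generators of $S_{n-3}$). Hence $\Omega$ embeds diagonally in $S_3\times S_{n-3}$; the embedding is surjective since $\rho_3\mapsto(e,(6,7))$ has product-sign $-1$, placing $\rho_3$ outside the unique index-$2$ subdirect product $\{(a,b):\mathrm{sgn}(a)=\mathrm{sgn}(b)\}$. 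Thus $\Omega\cong S_3\times S_{n-3}$. The condition $\rho_{n-4}\notin\Lambda$ is then immediate since $\Lambda$ fixes $n$, and $\langle\rho_1,\ldots,\rho_{n-5}\rangle\cong S_3\times S_{n-4}$ is maximal in $\Omega$ because the point-stabilizer $S_{n-4}$ of $n$ is maximal in $S_{n-3}$, forcing $S_3\times S_{n-4}$ to be maximal in $S_3\times S_{n-3}$.

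The main obstacle is verifying the intersection property for $\Omega$ itself, since its CPR graph is disconnected and hence not covered by the earlier rank-$n-1$ or rank-$n-2$ results. I would handle this by an inner induction on $n$ inside $\Omega$, again via Lemma~\ref{max} with last generator $\rho_{n-4}$: an intersection $\Omega_I\cap\Omega_J$ projects componentwise to intersections of standard Coxeter parabolic subgroups of $S_3$ and of $S_{n-3}$, both of which automatically satisfy the intersection property; the only subtlety is that the ``straddling'' generators $\rho_1,\rho_2$ act on both orbits, but since they are the only such generators, a direct Goursat-type argument identifies the intersection with $\Omega_{I\cap J}$ by tracking what $I$ and $J$ contribute on each orbit. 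With $\Omega$ confirmed to be a string C-group, Lemma~\ref{max} immediately yields that $\Gamma$ is one as well.
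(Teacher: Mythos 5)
Your overall strategy coincides with the paper's: induction on $n$ via Lemma~\ref{max}, the identification $\Gamma_0\cong S_3\times S_{n-3}$, and the maximality of $\Gamma_{0,r-1}\cong S_3\times S_{n-4}$ in $\Gamma_0$. Your subdirect-product/sign argument for $\Gamma_0\cong S_3\times S_{n-3}$ is a clean alternative to the paper's explicit elements $(\rho_2\rho_3)^3$ and $(\rho_1\rho_2\rho_3)^4$, and the checks $\rho_{r-1}\notin\Gamma_{r-1}$, the maximality statement, and the base case $n=9$ are all fine. The gap sits exactly where you yourself locate the main obstacle: proving that $\Omega=\Gamma_0$ satisfies the intersection property. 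You propose two routes and carry out neither. If you apply Lemma~\ref{max} to $\Omega=\langle\rho_1,\dots,\rho_{r-1}\rangle$, you must show that \emph{both} of its extreme parabolics are string C-groups. The last one, $\Gamma_{0,r-1}=\langle\rho_1,\dots,\rho_{r-2}\rangle$, is unproblematic (it is a parabolic subgroup of $\Gamma_{r-1}$, hence a C-group by the outer induction; this is how the paper disposes of it), but you never address the first one, $\Gamma_{0,1}=\langle\rho_2,\dots,\rho_{r-1}\rangle$. That group is not an instance of your inner induction: it is generated by $(1,2)(5,6),(6,7),\dots,(n-1,n)$, i.e., it is a sesqui-extension of a simplex with respect to $\rho_2$ and $\tau=(1,2)$, and one needs Lemma~\ref{se0} to conclude it is a string C-group isomorphic to $C_2\times S_{n-4}$, together with the maximality of $\Gamma_{0,1,r-1}\cong C_2\times S_{n-5}$ inside it, before Lemma~\ref{max} can be invoked for $\Omega$. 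This sesqui-extension step is the one genuinely new ingredient in the paper's proof, and it is missing from yours.

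Your fallback, the ``Goursat-type'' direct verification, is also not sound as sketched. The assertion that an intersection of parabolics of $\Omega$ ``projects componentwise to intersections of standard Coxeter parabolic subgroups'' conflates the projection of an intersection with the intersection of the projections (only one containment holds in general), and, more importantly, the parabolic subgroups of $\Omega$ involving $\rho_1$ or $\rho_2$ are \emph{proper} subdirect products of their orbit restrictions (already $\langle\rho_1\rangle=\langle(2,3)(4,5)\rangle$ is diagonal), so knowing both projections of an intersection does not determine it. The Goursat bookkeeping you defer to is precisely the content that would have to be supplied; until either that computation or the $\Gamma_{0,1}$ sesqui-extension argument is written out, the proof of the key claim that $\Gamma_0$ is a string C-group is incomplete.
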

\begin{proof}
We use Lemma~\ref{max} to show that $\Gamma$ is a C-group.
The group $\Gamma_{r-1}$ is assumed to be a C-group by induction.  The group $\Gamma_0$ is a intransitive group acting on two orbits, of size 3 and $n-3$, and thus is a subgroup of $S_3 \times  S_{n-3}$.  Furthermore, it contains a full symmetric group acting on the $n-3$ points in one orbit (as $\Gamma$ acts transitively and contains a transposition on these $n-3$ points).  The element $(\rho_2 \rho_3)^3$ acts trivially on this orbit and as a transposition on the smaller orbit.  The element $(\rho_1 \rho_2 \rho_3)^4$ also acts trivially on the larger orbit, and as a three-cycle on the smaller orbit.  Thus $\Gamma_0 \cong S_3 \times  S_{n-3}$.

To show that $\Gamma_0$ is a C-group, consider the groups $\Gamma_{0,1}$ and $\Gamma_{0,r-1}$.  The group $\Gamma_{0,1}$ is a sesqui-extension of a simplex and thus, by Lemma~\ref{se0}, it is a string C-group isomorphic to $C_2 \times S_{n-4}$ (as $(\rho_2 \rho_3)^3$ acts trivially on the orbit with $n-4$ points).  The group $\Gamma_{0,r-1}$ is a parabolic subgroup of $\Gamma_{r-1}$ and thus is a C-group.  The same arguments that gave the isomorphism type of $\Gamma_0$ show that $\Gamma_{0,r-1} \cong S_3 \times  S_{n-4}$.
Both $\Gamma_{0,r-1}$ and $\Gamma_{0,1}$ are thus C-groups and $\Gamma_{0,1,r-1}$ can similarly be shown to be isomorphic to $C_2 \times S_{n-5}$ which is maximal in $\Gamma_{0,1}$.  Therefore, $\Gamma_0$ is a C-group, and as $\Gamma_{0,r-1}$ is maximal in $\Gamma_0$, $\Gamma$ is a C-group.
\end{proof}
\begin{proposition}\label{L3}
The sggi $\Gamma$, described by the permutation representation graph $(L3)$, is a C-group.
\end{proposition}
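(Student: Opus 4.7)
The plan is to apply Lemma~\ref{max}, mirroring the proof of Proposition~\ref{L2}. The key observation is that the only difference between the generators of $(L2)$ and $(L3)$ is that in $(L3)$ the generator $\rho_0 = (v_1,v_2)(v_3,v_4)$ carries an extra transposition $(v_1,v_2)$ coming from the doubled $0$-edge, whereas in $(L2)$ one has $\rho_0 = (v_3,v_4)$; all other generators $\rho_1,\ldots,\rho_{r-1}$ agree as permutations in both graphs. Consequently, the parabolic subgroup $\Gamma_0 = \langle \rho_1,\ldots,\rho_{r-1}\rangle$ is, as a permutation group, literally the same subgroup of $S_n$ in $(L3)$ as in $(L2)$.

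Hence the entire analysis of $\Gamma_0$ from Proposition~\ref{L2} transfers verbatim: $\Gamma_0$ has orbits $\{v_1,v_2,v_3\}$ and $\{v_4,\ldots,v_n\}$, it is isomorphic to $S_3\times S_{n-3}$ (with the elements $(\rho_2\rho_3)^3$ and $(\rho_1\rho_2\rho_3)^4$ producing the missing $S_3$ factor on the small orbit while fixing the large orbit pointwise), it is itself a string C-group, and the further parabolic $\Gamma_{0,r-1} = \langle\rho_1,\ldots,\rho_{r-2}\rangle\cong S_3\times S_{n-4}$ is a maximal subgroup of $\Gamma_0$ as the stabilizer of the point $v_n$.

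It remains to check the two remaining hypotheses of Lemma~\ref{max}. First, $\rho_{r-1}=(v_{n-1},v_n)$ moves $v_n$ while every generator of $\Gamma_{r-1}$ fixes it, so $\rho_{r-1}\notin\Gamma_{r-1}$. Second, $\Gamma_{r-1}$ is itself a string C-group by induction on $n$, since removing the edge labelled $n-4$ from $(L3)$ yields the permutation representation graph $(L3)$ on $n-1$ moved points, so the inductive hypothesis applies for $n\geq 10$; the base case $n=9$ can be verified directly from the Atlas~\cite{Lalg}. The only point requiring care is to confirm that the extra transposition in $\rho_0$ does not disrupt the analysis, and this is immediate since every parabolic subgroup appearing in the argument omits $\rho_0$. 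Lemma~\ref{max} then concludes that $\Gamma$ is a string C-group.
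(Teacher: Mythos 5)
Your proof is correct and follows essentially the same route as the paper: both arguments observe that $\Gamma_0$ and $\Gamma_{0,r-1}$ coincide as permutation groups with those of $(L2)$ (since only $\rho_0$ differs between the two graphs), invoke the analysis of Proposition~\ref{L2} to get $\Gamma_0\cong S_3\times S_{n-3}$ with $\Gamma_{0,r-1}\cong S_3\times S_{n-4}$ maximal, take $\Gamma_{r-1}$ to be a string C-group by induction, and conclude via Lemma~\ref{max}. Your explicit verification that $\rho_{r-1}\notin\Gamma_{r-1}$ is a detail the paper leaves implicit, but it does not change the argument.
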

\begin{proof}
The groups $\Gamma_0$ and $\Gamma_{0,r-1}$ are the same as in Proposition~\ref{L2}, and thus $\Gamma_0$, is a C-group isomorphic to $S_3 \times S_{n-3}$, and $\Gamma_{0,r-1} \cong S_3 \times S_{n-4}$.   The group $\Gamma_{r-1}$ is assumed to be a C-group by induction.  As $\Gamma_{0,r-1}$ is maximal in $\Gamma_0$, the group $\Gamma$ is a C-group by Lemma~\ref{max}.
\end{proof}

\begin{proposition}\label{M2i=0 }
The sggi $\Gamma$, described by the permutation representation graph $(M2, i=0)$, is a C-group.
\end{proposition}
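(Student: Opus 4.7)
The plan is to apply Lemma~\ref{max}, so it suffices to establish: (a) $\Gamma_0$ and $\Gamma_{r-1}$ are string C-groups, (b) $\rho_{r-1}\notin\Gamma_{r-1}$, and (c) $\Gamma_{0,r-1}=\langle\rho_1,\ldots,\rho_{r-2}\rangle$ is maximal in $\Gamma_0$. Labelling the vertices $v_1,\ldots,v_n$, the generators act as $\rho_0=(v_1\,v_2)(v_3\,v_4)$, $\rho_1=(v_2\,v_3)(v_4\,v_5)$, $\rho_2=(v_3\,v_4)(v_5\,v_6)$, transpositions $\rho_k$ for $3\le k\le r-3$ along the central chain, $\rho_{r-2}=(v_{n-3}\,v_{n-2})(v_{n-1}\,v_n)$, and $\rho_{r-1}=(v_{n-2}\,v_{n-1})$.

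For $\Gamma_{r-1}$, deleting $\rho_{r-1}$ disconnects $\{v_{n-1},v_n\}$ from the rest of the graph, so $\Gamma_{r-1}$ is intransitive with two orbits. I would view $\Gamma_{r-1}$ as a sesqui-extension (via Lemma~\ref{se0}) of its action on the big orbit and induct on $n$, with the small base cases verified directly, to conclude that $\Gamma_{r-1}$ is a string C-group. The non-membership $\rho_{r-1}\notin\Gamma_{r-1}$ is then immediate since $\Gamma_{r-1}$ preserves the partition $\{v_1,\ldots,v_{n-2}\}\sqcup\{v_{n-1},v_n\}$ while $\rho_{r-1}$ does not.

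For $\Gamma_0=\langle\rho_1,\ldots,\rho_{r-1}\rangle$, the vertex $v_1$ is fixed and the action is on $\{v_2,\ldots,v_n\}$. The decisive computation is $\rho_1\rho_2=(v_2\,v_3\,v_5\,v_6\,v_4)$, a $5$-cycle; conjugating $\rho_3=(v_6\,v_7)$ by this $5$-cycle produces the transposition $(v_4\,v_7)$. Combining this with the chain $\rho_3,\ldots,\rho_{r-1}$, a short primitivity check followed by Jordan's classical theorem forces $\Gamma_0\cong S_{n-1}$, a string C-group in its natural representation. The identical argument applied on $\{v_2,\ldots,v_{n-1}\}$ (where $v_n$ is additionally fixed) gives $\Gamma_{0,r-1}\cong S_{n-2}$, the stabiliser of $v_n$ in $\Gamma_0$, which is a maximal subgroup. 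Lemma~\ref{max} then closes the argument.

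The delicate step will be the identification $\Gamma_0\cong S_{n-1}$. Proposition~\ref{badn-3} shows that the analogous sggi for $i\ge 1$ fails the intersection property because $\rho_{i+1}\rho_{i+2}$ has order $6$ there, producing only a dihedral subgroup at the position where a full symmetric action is needed. What rescues $i=0$ is that the double edge sits adjacent to $\rho_1$, making $\rho_1\rho_2$ a genuine $5$-cycle; this element is precisely what drives the primitivity argument and enables the Jordan-theorem step that underlies the whole proof.
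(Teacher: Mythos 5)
Your overall architecture matches the paper's: both proofs feed $\Gamma_0$, $\Gamma_{r-1}$ and $\Gamma_{0,r-1}$ into Lemma~\ref{max}, with $\Gamma_{r-1}$ handled by induction and $\Gamma_{0,r-1}\cong S_{n-2}$ maximal in $\Gamma_0\cong S_{n-1}$. The genuine gap is in your treatment of $\Gamma_0$. Lemma~\ref{max} requires $\Gamma_0$ to be a \emph{string C-group with respect to the generators} $\rho_1,\ldots,\rho_{r-1}$, and this does not follow from the isomorphism $\Gamma_0\cong S_{n-1}$: the intersection property is a statement about the lattice of parabolic subgroups determined by the generating sequence, not about the abstract group. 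Indeed, this entire section of the paper is devoted to sggi's that are all isomorphic to $S_m$ yet mostly \emph{fail} the intersection property (Propositions~\ref{badn-3} and~\ref{magmabads}); so ``$\Gamma_0\cong S_{n-1}$, a string C-group in its natural representation'' is precisely the inference that cannot be made. Your primitivity/Jordan computation (the $5$-cycle $\rho_1\rho_2$ conjugating $\rho_3$) correctly identifies the isomorphism type, but it supplies none of the needed hypothesis. The paper closes this hole by observing that $(\Gamma_0;\rho_1,\ldots,\rho_{r-1})$ is a previously analysed representation --- its CPR graph is the path $0,1,0,1,2,\ldots$ on $n-1$ points, i.e.\ the degree-$(n-1)$ instance of $(L1)$ --- and citing Lemma~21 of~\cite{fl} for its C-group property. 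Some argument of that kind (a citation, or a separate induction on $\Gamma_0$ and its own parabolics) is indispensable and is missing from your proof.

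A secondary issue: you have misread the tail of $(M2,i=0)$. The graph is the simple path with edge labels $0,1,\{0,2\},1,2,3,\ldots,n-4$ and a single double edge (see the sixth rank-$(n-3)$ graph in Figure~\ref{7cases}); in particular $\rho_{r-2}=(v_{n-2}\,v_{n-1})$ and $\rho_{r-1}=(v_{n-1}\,v_n)$ are single transpositions, and there is no second double edge $\{r-3,r-2\}$ as in your list of generators (a graph with two double edges does not occur in the classification, and your version is not the one in Table~\ref{tab:allpossi} either). This does not affect your computation of the isomorphism type of $\Gamma_0$, but it does change the analysis of $\Gamma_{r-1}$: under the correct reading $\Gamma_{r-1}$ simply fixes $v_n$ and is the degree-$(n-1)$ instance of $(M2,i=0)$ itself, so the induction applies directly and no sesqui-extension is needed. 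Finally, in your closing discussion, the obstruction for $i\geq 1$ is not that $\rho_{i+1}\rho_{i+2}$ has order $6$: the relevant product is still a $5$-cycle, and the failure is that the intersection $\Gamma_{\leq i+1}\cap\Gamma_{\geq i}$ is forced to be the full $S_5$ on the five common points when the intersection property would require it to be only the dihedral group of order $10$; what saves $i=0$ is that $\langle\rho_0,\rho_1\rangle$ is itself only dihedral.
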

\begin{proof}
The group $\Gamma_0$ is shown to be a C-group isomorphic to $S_{n-1}$ in Lemma 21 of~\cite{fl}.  The group $\Gamma_{r-1}$ is assumed to be a string C-group by induction.  Finally, the group $\Gamma_{0,r-1}$ is isomorphic to $S_{n-2}$ and thus is maximal in $\Gamma_0$. Hence, by Lemma~\ref{max}, $\Gamma$ is a string C-group.
\end{proof}
\begin{proposition}\label{A}
The sggi $\Gamma$, described by the permutation representation graph $(A)$, is a string C-group.
\end{proposition}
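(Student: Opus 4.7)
The plan is to imitate the nested approach of Proposition~\ref{L2} with three nested applications of Lemma~\ref{max}, proceeding by induction on $n \geq 9$.  Label the vertices of $(A)$ so that $\rho_0 = (a_1, a_2)(b_1, b_2)$, $\rho_1 = (a_2, a_3)$, $\rho_2 = (a_1, b_1)(a_2, b_2)(a_3, a_4)$, and $\rho_i = (a_{i+1}, a_{i+2})$ for $3 \le i \le r-1$.  I would run three parallel inductions, for $\Gamma$, $\Gamma_0$, and $\Gamma_{0,1}$, each feeding the next.

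First I would establish the isomorphism types of the three relevant parabolics.  The group $\Gamma_0$ has orbits $\{a_1, b_1\}$ and $\{a_2, b_2, a_3, \ldots, a_{n-2}\}$; on the large orbit it is transitive and contains the transposition $\rho_1$, and a short block analysis (using $\rho_2$'s unusual support to rule out small block sizes) gives primitivity, so Jordan's theorem identifies the image as $S_{n-2}$.  The only $C_2$-quotient of $S_{n-2}$ is the sign map, which disagrees with the naive assignment sending $\rho_2 \mapsto 1$ and all other $\rho_i \mapsto 0$; consequently there is a relation in $S_{n-2}$ using an odd number of images of $\rho_2$, whose corresponding word in $\Gamma_0$ evaluates to $(a_1, b_1) \neq 1$, giving $(a_1, b_1) \in \Gamma_0$ and hence $\Gamma_0 \cong C_2 \times S_{n-2}$.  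An analogous argument at one level deeper gives $\Gamma_{0,1} \cong C_2 \times S_{n-4}$ with the synchronising involution $(a_1, b_1)(a_2, b_2) = (\rho_2 \rho_3)^3$.  Finally $\Gamma_{0,1,2} = \langle \rho_3, \ldots, \rho_{r-1}\rangle \cong S_{n-5}$ with its standard Coxeter generators.

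The three nested applications of Lemma~\ref{max} then run as follows.  \emph{Innermost:} $\Gamma_{0,1}$ is a string C-group because its base $\Gamma_{0,1,2} \cong S_{n-5}$ is, its cap $\Gamma_{0,1,r-1}$ is the $(n-1)$-analogue (inductive hypothesis on $\Gamma_{0,1}$), the subgroup $\langle \rho_3, \ldots, \rho_{r-2}\rangle \cong S_{n-6}$ is maximal in $\Gamma_{0,1,2}$ as a point-stabiliser, and $\rho_{r-1} \notin \Gamma_{0,1,r-1}$ because $\rho_{r-1}$ alone moves $a_{n-2}$.  \emph{Middle:} $\Gamma_0$ is a string C-group because its base $\Gamma_{0,1}$ is now known to be one, its cap $\Gamma_{0,r-1}$ is the $(n-1)$-analogue (inductive hypothesis on $\Gamma_0$), the subgroup $\langle \rho_2, \ldots, \rho_{r-2}\rangle \cong C_2 \times S_{n-5}$ is maximal in $\Gamma_{0,1} \cong C_2 \times S_{n-4}$ since $S_{n-5}$ is maximal in $S_{n-4}$, and again $\rho_{r-1}$ does not lie in the cap.  \emph{Outer:} $\Gamma$ is a string C-group because $\Gamma_0$ is, $\Gamma_{r-1}$ is the $(n-1)$-analogue (inductive hypothesis on $\Gamma$), and $\langle \rho_1, \ldots, \rho_{r-2}\rangle \cong C_2 \times S_{n-3}$ is maximal in $\Gamma_0 \cong C_2 \times S_{n-2}$.

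The main obstacle is the $\Gamma_{0,1}$ step.  It is tempting to view $\Gamma_{0,1}$ as a sesqui-extension of $S_{n-4}$ by the central involution $\tau' = (a_1, b_1)(a_2, b_2)$ via $\rho_2$ and invoke Lemma~\ref{se0}(4); however, the very relation $(\rho_2 \rho_3)^3 = \tau'$ that we used to identify $\Gamma_{0,1}$ forces $\tau' \in \Gamma_{0,1}$, so the hypothesis $\tau' \notin \Gamma_{0,1}$ of Lemma~\ref{se0}(4) fails.  The nested Lemma~\ref{max} approach above sidesteps this complication cleanly, because at this rank the base parabolic $\Gamma_{0,1,2}$ is already a plain symmetric group and the cap is smaller and amenable to induction.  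The common base case $n = 9$ (so $r = 6$) can be verified by direct computation.
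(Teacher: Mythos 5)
Your proof is correct, and the top-level skeleton (induction on $n$, one application of Lemma~\ref{max} with $\Gamma_{0,r-1}\cong C_2\times S_{n-3}$ maximal in $\Gamma_0\cong C_2\times S_{n-2}$) coincides with the paper's. Where you genuinely diverge is in how $\Gamma_0$ is certified to be a string C-group: the paper disposes of this in one line by viewing $\Gamma_0$ as a sesqui-extension of the known degree-$(n-2)$, rank-$(n-4)$ string C-group of Lemma 21 of~\cite{fl} and citing Lemma~\ref{se0}, whereas you descend two further levels with Lemma~\ref{max}, bottoming out at the plain simplex $\Gamma_{0,1,2}\cong S_{n-5}$ and running parallel inductions for $\Gamma_{0,1}$ and $\Gamma_0$. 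Your identification of the central involutions is also different in flavour: the paper exhibits the explicit element $(\rho_2\rho_1\rho_2\rho_3\rho_2)^5$, while you use the parity obstruction (the assignment $\rho_2\mapsto 1$, $\rho_i\mapsto 0$ cannot be the sign character of $S_{n-2}$), which is essentially Lemma~\ref{se0}(2) in disguise; both are valid. Your caveat about Lemma~\ref{se0}(4) is well taken and in fact cuts against the paper's own one-line justification: since the paper goes on to show that the transposition $(a_1,b_1)$ \emph{does} lie in $\Gamma_0$, the hypothesis $\tau\notin\Gamma^*$ of part (4) fails there too, and the authors are implicitly relying on the stronger one-directional sesqui-extension result of~\cite{flm} (a sesqui-extension of a string C-group is a string C-group) rather than on the biconditional as stated. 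What each approach buys: the paper's is shorter by outsourcing the intersection property of $\Gamma_0$ to earlier work; yours is longer but self-contained, avoids the sesqui-extension subtlety entirely, and all of its ingredients (maximality of point stabilisers, maximality of $C\times H$ in $C\times S$ when the $C_2$ factors coincide, $\rho_{r-1}$ moving the last point) check out.
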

\begin{proof}
The group $\Gamma_0$ is a sesqui-extension of a group which is a string C-group by Lemma 21 of~\cite{fl} and Lemma~\ref{se0}, and thus is itself a C-group.  Furthermore, the element $(\rho_2 \rho_1 \rho_2 \rho_3 \rho_2)^5$ acts as the identity on the larger orbit of $\Gamma_0$ and as a transposition on the smaller orbit.  Therefore $\Gamma_0 \cong C_2 \times S_{n-2}$; similarly, $\Gamma_{0,r-1} \cong C_2 \times S_{n-3}$.  The group $\Gamma_{r-1}$ is assumed to be a C-group by induction.  Therefore, since $\Gamma_{0,r-1}$ is maximal in $\Gamma_0$, $\Gamma$ is a string C-group.
\end{proof}
\begin{proposition}\label{G}
The sggi $\Gamma$, described by the permutation representation graph $(G)$, is a C-group.
\end{proposition}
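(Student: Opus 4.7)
The plan is to apply Lemma~\ref{max} with $\rho_{n-4}$ as the last generator, which reduces the proposition to four items: that $\Gamma_{r-1}=\langle\rho_0,\ldots,\rho_{n-5}\rangle$ and $\Gamma_0=\langle\rho_1,\ldots,\rho_{n-4}\rangle$ are string C-groups, that $\rho_{n-4}\notin\Gamma_{r-1}$, and that $\Gamma_{0,r-1}$ is maximal in $\Gamma_0$. Two of these are essentially immediate: deleting $\rho_{n-4}$ fixes the rightmost top-row vertex of the graph $(G)$ and leaves a permutation representation graph again of type $(G)$ on $n-1$ points, so $\Gamma_{r-1}$ is a string C-group by induction on $n$; and the last top-row vertex is moved only by $\rho_{n-4}$, so $\rho_{n-4}\notin\Gamma_{r-1}$.

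The heart of the argument is the analysis of $\Gamma_0$. Its orbits on $\{1,\ldots,n\}$ are the two ``double-edge'' vertices (of size $2$) and the remaining vertices (of size $n-2$), so $\Gamma_0$ embeds into $S_2\times S_{n-2}$. Since $\rho_2$ is a product of three disjoint transpositions, two of which commute with $\rho_3$, the conjugate $\rho_2\rho_3\rho_2$ collapses to a single transposition on the large orbit and acts trivially on the small one; successively conjugating by the remaining generators then produces every adjacent transposition on the large orbit, so the projection of $\Gamma_0$ onto $S_{n-2}$ is surjective. To upgrade this to $\Gamma_0\cong C_2\times S_{n-2}$, I would observe that the image of $\rho_1$ in $S_{n-2}$ is a single transposition whereas that of $\rho_2$ is a product of two transpositions; since both generators also act nontrivially on the small orbit, no homomorphism $S_{n-2}\to C_2$ can describe that small-orbit action simultaneously for them, so by a Goursat-type argument the image in $S_2\times S_{n-2}$ must be all of $S_2\times S_{n-2}$.

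From here, an explicit central involution $\tau\in\Gamma_0$, acting as the nontrivial transposition on the small orbit and trivially on the large one, can be written down by composing $\rho_1$ with a lift of the transposition $(v_2,v_3)$ inside the kernel of the small-orbit action (built from the transposition $(v_3,v_5)$ already exhibited, by conjugating with $\rho_1$ and multiplying). Centrality of $\tau$ then yields the decomposition $\Gamma_0=\langle\tau\rangle\times\langle\rho_1\tau,\rho_2\tau,\rho_3,\ldots,\rho_{n-4}\rangle$, and the intersection property for $\Gamma_0$ follows by a subsidiary application of Lemma~\ref{max}, again inducting on $n$, with parabolic $\Gamma_{0,n-4}\cong C_2\times S_{n-3}$ being maximal in $\Gamma_0$. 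The same analysis applied to $\Gamma_{0,r-1}$ yields $\Gamma_{0,r-1}\cong C_2\times S_{n-3}$, embedded in $\Gamma_0\cong C_2\times S_{n-2}$ as the stabilizer of the last top-row vertex inside the $S_{n-2}$-factor (with the two $C_2$-factors identified); since $S_{n-3}$ is maximal in $S_{n-2}$, so is $\Gamma_{0,r-1}$ in $\Gamma_0$, and Lemma~\ref{max} finishes the argument.

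The principal obstacle is the analysis of $\Gamma_0$: because its permutation representation graph does not itself appear among the graphs already handled in Section~\ref{section6}, the Goursat/direct-product decomposition via $\tau$ and the inductive verification of the intersection property inside $\Gamma_0$ are the technical heart of the proof.
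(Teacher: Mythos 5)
Your overall strategy matches the paper's: apply Lemma~\ref{max} with $\Gamma_{r-1}$ handled by induction (it is again of type $(G)$ on $n-1$ points) and with $\Gamma_0$ identified as $C_2\times S_{n-2}$, so that $\Gamma_{0,r-1}\cong C_2\times S_{n-3}$ is maximal in it. Your Goursat-style identification of $\Gamma_0$ (the images of $\rho_1$ and $\rho_2$ on the large orbit have opposite parities while both act nontrivially on the small orbit, so the relevant subgroup of $S_2\times S_{n-2}$ cannot be the graph of a homomorphism) is correct and is a reasonable alternative to the paper's explicit element $(\rho_2\rho_1\rho_2\rho_3)^3$.

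The gap is in the verification that $\Gamma_0$ satisfies the intersection property with respect to its distinguished generators $\rho_1,\ldots,\rho_{r-1}$. Knowing $\Gamma_0\cong C_2\times S_{n-2}$ as an abstract group does not give this, and your ``subsidiary application of Lemma~\ref{max}'' invokes the wrong hypothesis: applied to $\Gamma_0$, Lemma~\ref{max} requires that $\Gamma_{0,1}$ and $\Gamma_{0,r-1}$ be string C-groups and that $\Gamma_{0,1,r-1}$ be maximal in $\Gamma_{0,1}$ --- not that $\Gamma_{0,r-1}$ be maximal in $\Gamma_0$, which is the hypothesis of the \emph{outer} application to $\Gamma$. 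You never address $\Gamma_{0,1}$ at all; the paper handles it as a sesqui-extension of the simplex via Lemma~\ref{se0}, giving $\Gamma_{0,1}\cong C_2\times S_{n-4}$ with $\Gamma_{0,1,r-1}\cong C_2\times S_{n-5}$ maximal in it, while $\Gamma_{0,r-1}$ is a C-group because it is a parabolic subgroup of the inductively known C-group $\Gamma_{r-1}$. Your alternative route via the decomposition $\Gamma_0=\langle\tau\rangle\times\langle\rho_1\tau,\rho_2\tau,\rho_3,\ldots\rangle$ does not close this gap either: the second factor is not generated by the distinguished generators of $\Gamma_0$, and since $\tau$ is folded into two generators rather than one, Lemma~\ref{se0}(4), which is what transfers the intersection property for sesqui-extensions, does not apply. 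Finally, ``inducting on $n$'' inside $\Gamma_0$ has nothing to stand on, since, as you note yourself, the permutation representation graph of $\Gamma_0$ is not one of the types already treated.
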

\begin{proof}
The group $\Gamma_0$ is an intransitive group acting on two orbits of size 2 and $n-2$.  Furthermore, it contains a full symmetric group acting on the larger orbit.  The element $(\rho_2 \rho_1 \rho_2 \rho_3)^3$ acts as the identity on the larger orbit, and as a transposition on the smaller one.  Thus $\Gamma_0 \cong C_2 \times S_{n-2}$; similarly, $\Gamma_{0,r-1} \cong C_2 \times S_{n-3}$.  The group $\Gamma_{0,1}$ is a sesqui-extension of the simplex, and thus is a C-group isomorphic to $C_2 \times S_{n-4}$; similarly, $\Gamma_{0,1,r-1} \cong C_2 \times S_{n-5}$.
We assume that $\Gamma_{r-1}$ is a C-group by induction, and thus $\Gamma_{0,r-1}$ is also a C-group.  Since both $\Gamma_{0,1}$ and $\Gamma_{0,r-1}$ are C-groups, and $\Gamma_{0,1,r-1}$ is maximal in $\Gamma_{0,1}$, the group $\Gamma_0$ is a C-group.
Now as both $\Gamma_0$ and $\Gamma_{r-1}$ are C-groups, and $\Gamma_{0,r-1}$ is maximal in $\Gamma_0$, we conclude that $\Gamma$ is a string C-group.
\end{proof}
\begin{proposition}\label{K}
The sggi $\Gamma$, described by the permutation representation graph $(K)$, is a C-group.
\end{proposition}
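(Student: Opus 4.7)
The plan is to apply Lemma~\ref{max} to $\Gamma$ with respect to $\Gamma_0 = \langle \rho_1, \ldots, \rho_{n-4} \rangle$ and $\Gamma_{n-4} = \langle \rho_0, \ldots, \rho_{n-5} \rangle$, following the same template as in Propositions~\ref{A} and~\ref{G}. Label the $n$ vertices of the CPR graph $a_1, \ldots, a_n$ from left to right, so that $\rho_0 = (a_2,a_3)$, $\rho_1 = (a_1,a_2)(a_3,a_4)$, $\rho_i = (a_{i+2},a_{i+3})$ for $2 \le i \le n-6$, $\rho_{n-5} = (a_{n-3},a_{n-2})(a_{n-1},a_n)$ and $\rho_{n-4} = (a_{n-2},a_{n-1})$.

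First I would identify $\Gamma_{n-4}$. Removing $\rho_{n-4}$ splits the representation into the orbits $\{a_1, \ldots, a_{n-2}\}$ and $\{a_{n-1}, a_n\}$. The restriction to the larger orbit is the known rank $n'-2$ string C-group representation of $S_{n'}$ for $n' = n-2$ (the rank $n-2$ row of Table~\ref{tab:allpossi}). Since $\rho_{n-5}$ is the only generator acting nontrivially on $\{a_{n-1}, a_n\}$, $\Gamma_{n-4}$ is a sesqui-extension of this representation with respect to $\rho_{n-5}$ and $\tau = (a_{n-1},a_n)$. The relation $(\rho_{n-5} \rho_{n-6})^3 = 1$ contains an odd number of $\rho_{n-5}$'s, so Lemma~\ref{se0} delivers $\Gamma_{n-4} \cong S_{n-2} \times C_2$ and the fact that it is a string C-group. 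The argument for $\Gamma_0$ is analogous: its orbits are $\{a_1, a_2\}$ and $\{a_3, \ldots, a_n\}$; up to duality and a shift of the generator labels by $1$, the action on the larger orbit is again the rank $n'-2$ representation of $S_{n-2}$; viewing $\Gamma_0$ as a sesqui-extension via $\rho_1 = \tilde{\rho}_1 \cdot (a_1,a_2)$ and using the odd relation $(\tilde{\rho}_1 \rho_2)^3 = 1$ yields $\Gamma_0 \cong S_{n-2} \times C_2$ as a string C-group.

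Next I would verify the remaining hypotheses of Lemma~\ref{max}. Removing both $\rho_0$ and $\rho_{n-4}$ produces three orbits $\{a_1, a_2\}$, $\{a_3, \ldots, a_{n-2}\}$, $\{a_{n-1}, a_n\}$, and two iterations of the sesqui-extension argument give $\Gamma_{0, n-4} \cong C_2 \times S_{n-4} \times C_2$. Since $S_{n-4} \times S_2$ is the stabilizer of an unordered $2$-subset in $S_{n-2}$ and therefore maximal there, $\Gamma_{0, n-4}$ is maximal in $\Gamma_0 \cong C_2 \times S_{n-2}$. Finally, $\rho_{n-4} \notin \Gamma_{n-4}$ because $\Gamma_{n-4}$ preserves its two orbits whereas $\rho_{n-4}$ moves points between them. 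Lemma~\ref{max} then gives that $\Gamma$ is a string C-group.

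The main technical obstacle I anticipate is cleanly deriving the decompositions $\Gamma_0 \cong \Gamma_{n-4} \cong S_{n-2} \times C_2$ through Lemma~\ref{se0}, since $\rho_1$ and $\rho_{n-5}$ are each involutions acting simultaneously on a small and a large orbit and one must correctly split them into a base-group factor and the extending involution $\tau$, and then locate a suitable odd relation inside the base group to invoke part~(2) of that lemma. Once those product structures are in place, the maximality computation and the invocation of Lemma~\ref{max} are routine.
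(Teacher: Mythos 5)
Your proposal is correct and follows essentially the same route as the paper: both proofs recognize $\Gamma_0$ and $\Gamma_{r-1}$ as sesqui-extensions of the known rank $(n-2)-2$ string C-group representation of $S_{n-2}$, conclude via Lemma~\ref{se0} that they are string C-groups isomorphic to $C_2\times S_{n-2}$, identify $\Gamma_{0,r-1}\cong C_2\times S_{n-4}\times C_2$ as maximal in $\Gamma_0$, and invoke Lemma~\ref{max}. You merely spell out details the paper leaves implicit (the explicit generators, the odd relation needed for Lemma~\ref{se0}(2), and the $2$-subset-stabilizer justification of maximality).
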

\begin{proof}
The groups $\Gamma_0$ and $\Gamma_{r-1}$ are both sesqui-extensions of a group which is a string C-group by Lemma 21 of~\cite{fl}, and thus are themselves C-groups.  Furthermore they are both isomorphic to $C_2 \times S_{n-2}$.  As $\Gamma_{0,r-1} \cong C_2 \times S_{n-4} \times C_2$ is maximal in $\Gamma_0$, the group $\Gamma$ is a string C-group.
\end{proof}
Note that all groups corresponding to the graphs of Table~\ref{tab:allpossi} are transitive and contain a transposition. Thus they are isomorphic to $S_n$.  In Propositions~\ref{badn-3} through~\ref{K} we proved that seven graphs, namely $(A)$, $(G)$, $(K)$, $(L1)$, $(L2)$, $(L3)$ and $M2$ (with $i=0$), correspond to rank $n-3$ string C-groups isomorphic to $S_n$ for $n\geq 9$. So far we have shown that this is the complete list of rank $n-3$ transitive string C-groups with connected diagram when all maximal parabolic subgroups are intransitive. Let us now consider the rank $n-4$ case.
\subsection{Rank $n-4$}

To show which rank $n-4$ groups do not satisfy the intersection property, we could proceed in the manner above, dealing with each rank $n-4$ group ad-hoc.  However, for sake of brevity, we utilize the fact that all the proofs that confirm the actual string C-groups are inductive, with the base case (where $n=11$) checked using \textsc{Magma}~\cite{BCP97}.   Thus, we also use \textsc{Magma} to simplify many of the proofs showing which remaining permutation representation graphs do not yield string C-groups.  In order to do this, we notice that for many of the groups, the base case (where $n=11$) gives a parabolic subgroup of the larger, higher rank, cases.  Thus simply knowing that the case where $n=11$ does not yield a string C-group proves that many of the groups in question are not string C-groups for any $n \geq 11$.  We point out here that although the computer is used to shorten this article, each of the cases can be easily done by hand as well.

\begin{proposition}\label{magmabads}

None of the string groups generated by involutions $\Gamma$, described by the permutation representation graphs $
(C)$, $(D)$, $(H1)$, $(H2)$, $(P2)$, $(P3)$, $(Q1)$, $(Q3)$, $(R1)$, $(R2)$, $(S1)$, $(S2)$, $(T1)$, $(T2)$, $(U1)$, nor $(U2)$, are C-groups.
\end{proposition}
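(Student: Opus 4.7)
The plan is to proceed by induction on $n$, with base case $n=11$ verified computationally for each of the sixteen graph families. In the base case, each sggi described by the listed graphs is a finite group of rank $7$ acting on $11$ points; one constructs the group in \textsc{Magma} directly from the generators read off the graph and searches for a pair of subsets $J,K\subseteq\{0,\ldots,6\}$ witnessing
\[
\langle\rho_j\mid j\in J\rangle\cap\langle\rho_k\mid k\in K\rangle\neq\langle\rho_j\mid j\in J\cap K\rangle,
\]
which exhibits a violation of the intersection property. These are the ``base-case computations'' alluded to by the authors.

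For the inductive step with $n\geq 12$, I would observe that in each of the sixteen families the extreme right vertex of the permutation representation graph has degree one and is incident only to the edge labelled $r-1=n-5$. Consequently, the maximal parabolic subgroup $\Gamma_{r-1}=\langle\rho_0,\ldots,\rho_{r-2}\rangle$ fixes that rightmost vertex, and its faithful action on the remaining $n-1$ points is described by precisely the same graph family with parameter $n-1$ and rank $(n-1)-4$. By the inductive hypothesis, this restriction of $\Gamma_{r-1}$ fails the intersection property.

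The failure transfers upward at once: if $(J,K)$ with $J,K\subseteq\{0,\ldots,r-2\}$ is a witness pair inside $\Gamma_{r-1}$, then the same pair witnesses a violation inside $\Gamma$, since the subgroups $\langle\rho_j\mid j\in J\rangle$ and $\langle\rho_k\mid k\in K\rangle$ coincide whether computed in $\Gamma_{r-1}$ or in $\Gamma$. This is the standard fact, applied contrapositively, that any subgroup of a string C-group generated by a subset of its distinguished involutions is itself a string C-group. Hence $\Gamma$ is not a C-group for any $n\geq 11$.

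The principal obstacle is the verification of the base case, which must be carried out for each of the sixteen graphs individually. While routine in \textsc{Magma}, a purely hand-written proof would require sixteen ad-hoc arguments in the spirit of Proposition~\ref{badn-3}, where a specific intersection of parabolic subgroups is shown to strictly contain the prescribed subgroup. One also needs to inspect Table~\ref{tab:allpossi} case by case to confirm the ``rightmost vertex of degree one'' assertion that powers the inductive reduction, but this is immediate from the figures since the dotted segment in each family is precisely a path of consecutive labels terminating at $n-5$.
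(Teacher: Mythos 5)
Your proposal is correct and takes essentially the same approach as the paper: verify the $n=11$ base case with \textsc{Magma}, then transfer the failure of the intersection property upward from a parabolic subgroup, using the fact that a violation inside $\langle\rho_j\mid j\in J\rangle\cap\langle\rho_k\mid k\in K\rangle$ persists in the full group. The only cosmetic difference is that the paper reduces to the base case in a single step by taking $\Gamma_{\leq 6}$ (the leftmost seven generators, whose support is exactly the $11$-point instance of each graph), whereas you peel off $\rho_{r-1}$ from the right one vertex at a time by induction.
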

\begin{proof}
It can easily be verified using \textsc{Magma} that when $n=11$ none of these graphs yield a string C-group.  For larger $n$,  let $\Gamma=\langle \rho_0, \ldots, \rho_{n-5} \rangle $ be the group corresponding to the permutation representation graph on $n$ points.  Now consider the group $\Gamma_{\leq 6}$.  This group is exactly one of the groups shown to not satisfy the intersection by \textsc{Magma} above.  Thus, since $\Gamma_{\leq 6}$ is not a string C-group, neither is $\Gamma$.
\end{proof}

Next let us consider the groups given by graphs $(O1)$, $(O2)$, $(V1)$, and $(V2)$.  These cannot be treated as above, as they each represent a family of groups not only indexed by $n$ but also by $i$.

\begin{proposition}\label{badn-4}
None of the string groups generated by involutions $\Gamma$, described by the permutation representation graphs $(O1)$, $(O2, \ i \geq 1)$, $(V1)$, nor $(V2)$, are C-groups.
\end{proposition}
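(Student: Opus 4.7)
The plan is to show intersection property failure by directly transplanting the argument used for graphs $(M1)$ and $(M2, i\geq 1)$ in Proposition~\ref{badn-3}. Each of the four graphs $(O1)$, $(O2, i\geq 1)$, $(V1)$, $(V2)$ contains the same palindromic bump in the middle that doomed the rank $n-3$ graphs: $(O1)$ and $(V1)$ contain the subpattern of labels $i, i+1, i+2, i+1, i+2$ on five consecutive edges (exactly as in $(M1)$), while $(O2, i\geq 1)$ and $(V2)$ contain the subpattern $i, i+1, \{i,i+2\}, i+1, i+2$ (exactly as in $(M2)$). The only differences from the rank $n-3$ graphs lie in the tails on the two sides of the palindrome, so the local argument around the bump transfers verbatim once one checks that the relevant parabolic subgroups are still full symmetric groups on their supports.

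First I would treat $(O1)$ and $(V1)$ together. Set $H := \Gamma_{\leq i+2}$ and $K := \Gamma_{\geq i+1}$, so that the intersection property would demand $H\cap K = \langle \rho_{i+1},\rho_{i+2}\rangle$. Labelling the five palindromic vertices $v_{i+1},\ldots,v_{i+5}$, one computes as in Proposition~\ref{badn-3} that $\rho_{i+1}=(v_{i+1},v_{i+2})(v_{i+3},v_{i+4})$ and $\rho_{i+2}=(v_{i+2},v_{i+3})(v_{i+4},v_{i+5})$, so their product is a $5$-cycle and $\langle \rho_{i+1},\rho_{i+2}\rangle\cong D_{10}$. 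The key step is then to observe that both $H$ and $K$ act as full symmetric groups on their supports: in each case the restricted permutation representation graph is connected on the relevant orbit and contains a single transposition coming from the outer tail (namely an edge $\rho_{j}$ with $j\in\{0,\ldots,i\}$ in $H$, or from the opposite end of the graph for $K$), and the remaining palindromic double transpositions together with a transposition suffice to generate the full symmetric group by the same short computation used in the $(M1)$ case. Consequently each of $H$ and $K$ contains the pointwise stabilizer of all vertices outside $\{v_{i+1},\ldots,v_{i+5}\}$, which is a copy of $S_5$ on those five points. This $S_5$ lies in $H\cap K$ and properly contains $\langle \rho_{i+1},\rho_{i+2}\rangle$, so the intersection property fails.

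For $(O2, i\geq 1)$ and $(V2)$ the argument is the same after replacing $H,K$ by $H:=\Gamma_{\leq i+1}$ and $K:=\Gamma_{\geq i}$; the double labelling of the central edge makes $\rho_i$ and $\rho_{i+2}$ each act as a product of two transpositions, and a direct computation gives $\langle \rho_i,\rho_{i+1}\rangle\cong D_{10}$ on the same five palindromic points. The hypothesis $i\geq 1$ is precisely what is needed so that $H$ contains the single transposition $\rho_{i-1}$, forcing $H\cong S_{i+5}$ on its support; without this transposition (the $i=0$ case, which corresponds in $(O2)$-style to the rank $n-3$ group of Proposition~\ref{M2i=0 } that \emph{is} a C-group) the argument collapses, which is exactly why that case is excluded. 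The expected main obstacle is verifying that $H$ and $K$ are symmetric groups on their supports; this is a routine but case-by-case check, handled by noting that in every one of the four graphs the outer tail provides either a standard linear $(L1)$-type segment or a $(K)$-type segment, both of which are well-understood to give symmetric groups when combined with the palindromic generators via conjugation by the tail's single transposition.
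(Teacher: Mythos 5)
Your overall strategy---transplanting the $\Gamma_{\leq i+2}\cap\Gamma_{\geq i+1}$ (resp.\ $\Gamma_{\leq i+1}\cap\Gamma_{\geq i}$) argument of Proposition~\ref{badn-3} to these four families---is exactly what the paper does for most of them, and it is sound for $(O1)$, $(O2,\ i\geq 1)$, $(V1)$ and $(V2,\ i\geq 2)$. There is, however, a genuine gap: the proposition asserts the failure of the intersection property for $(V2)$ with \emph{every} $i\geq 0$, and your proof does not cover $(V2,\ i=0)$. For that graph your choice $K:=\Gamma_{\geq i}=\Gamma_{\geq 0}=\Gamma$ makes the required equality $H\cap K=\langle\rho_0,\rho_1\rangle$ hold vacuously, and $H:=\Gamma_{\leq 1}=\langle\rho_0,\rho_1\rangle$ is a dihedral group rather than a symmetric group on its support, so no contradiction can be extracted from this pair. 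You in fact concede that the argument collapses at $i=0$ and say that this ``is exactly why that case is excluded''---but the exclusion $i\geq 1$ in the statement applies only to $(O2)$, not to $(V2)$. The paper closes this case by a different route: it observes that $(V2)$ with $i=0$ is literally the graph $(S2)$ (and $(V2)$ with $i=1$ is $(R2)$), both of which were already shown not to be string C-groups in Proposition~\ref{magmabads} via the \textsc{Magma} check at $n=11$ combined with the restriction to $\Gamma_{\leq 6}$. Without either that identification or a separate ad hoc argument, $(V2,\ i=0)$ remains unproved in your write-up.

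A secondary, repairable inaccuracy: in the boundary cases $(V1,\ i=0)$ and $(V2,\ i=1)$ the left-hand tail with labels $1,0,1$ forces $\rho_1$ to carry an extra transposition, so the parabolic $\langle\rho_{i+1},\rho_{i+2}\rangle$ (resp.\ $\langle\rho_i,\rho_{i+1}\rangle$) is dihedral of order $20$ acting on seven points rather than of order $10$ on five; similarly, for $(V2)$ with $i\geq 2$ the element $\rho_{i-1}$ you invoke is a product of two transpositions, not a single one (the single transposition available in $H$ is $\rho_0$). The contradiction survives, since the intersection still contains a copy of $S_5$ of order $120>20$, but the computations as stated are not correct in those cases and should be adjusted.
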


\begin{proof}
Observe that the graphs $(V2)$ with $i=0$ and $(S2)$ are the same and the graphs  $(V2)$ with $i=1$ and $(R2)$ are also the same.
For the remaining cases, the proof of this is the same as the proof of Proposition~\ref{badn-3}. 
\end{proof}

\begin{proposition}\label{B}
The sggi $\Gamma$, described by the permutation representation graph $(B)$, is a C-group.
\end{proposition}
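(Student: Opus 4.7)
The plan is to invoke Lemma~\ref{max} with the maximal parabolic subgroups $\Gamma_0$ and $\Gamma_{r-1}$, where $r=n-4$. For $\Gamma_{r-1}$, deleting the rightmost generator $\rho_{r-1}$ (a transposition swapping the two last top vertices) isolates the final vertex, so on its moved points $\Gamma_{r-1}$ is precisely the group of graph $(B)$ on $n-1$ points; induction on $n$, with the base case $n=11$ verified in \textsc{Magma}, gives that $\Gamma_{r-1}$ is a string C-group, and $\rho_{r-1}\notin\Gamma_{r-1}$ is immediate.

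The heart of the proof is to show $\Gamma_0$ is a string C-group with $\Gamma_{0,r-1}$ maximal in it. Removing $\rho_0$ deletes the $0$-edges on both the top and bottom rows, splitting the permutation domain into two $\Gamma_0$-orbits: the pair $\{v_1,w_1\}$ joined only by a vertical $3$-edge, and the remaining $n-2$ points. Every generator other than $\rho_3$ fixes $\{v_1,w_1\}$ pointwise, while on this pair $\rho_3$ acts as the transposition $\tau=(v_1\,w_1)$. Writing $\rho_3=\sigma\tau$, where $\sigma$ is the restriction of $\rho_3$ to the large orbit, exhibits $\Gamma_0$ as the sesqui-extension of $\Gamma_0':=\langle \rho_1,\rho_2,\sigma,\rho_4,\ldots,\rho_{r-1}\rangle$ with respect to $\sigma$ and $\tau$ in the sense of Lemma~\ref{se0}.

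I will identify $\Gamma_0'$ acting on its $n-2$-point orbit with $S_{n-2}$. Transitivity is clear from the connectedness of the induced graph on the large orbit. The generator $\rho_2=(v_3\,v_4)$ is a transposition, and conjugating it by $\rho_1$ yields $(v_2\,v_4)$, by $\sigma$ yields $(w_3\,v_5)$, and by $\rho_1$ again yields $(w_2\,v_5)$; together with the transpositions $(v_5\,v_6),(v_6\,v_7),\ldots,(v_{n-4}\,v_{n-3})$ coming from the remaining generators, this produces a set of transpositions inside $\Gamma_0'$ whose underlying graph is connected on the whole $n-2$-point orbit. By the classical fact that a transitive group generated by such a family is the full symmetric group, we conclude $\Gamma_0'\cong S_{n-2}$. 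Lemma~\ref{se0} then forces $\Gamma_0\cong S_{n-2}$ or $\Gamma_0\cong S_{n-2}\times C_2$, and in either case the intersection property for $\Gamma_0$ is inherited from $\Gamma_0'$ by Lemma~\ref{se0}(4).

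The same sesqui-extension analysis, applied to $\Gamma_{0,r-1}$, identifies it with $S_{n-3}$ (times the same $C_2$ factor if one is present for $\Gamma_0$). Since the point stabiliser $S_{n-3}$ is a maximal subgroup of $S_{n-2}$, and the possible common $C_2$ direct factor does not disturb this, $\Gamma_{0,r-1}$ is maximal in $\Gamma_0$. All hypotheses of Lemma~\ref{max} are then verified, and $\Gamma$ is a string C-group. The main obstacle is the sesqui-extension bookkeeping on $\{v_1,w_1\}$: to decide via Lemma~\ref{se0}(2)--(3) whether $\tau$ lies in $\Gamma_0$, and to ensure the maximality of $\Gamma_{0,r-1}$ survives in both scenarios. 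The cleanest way is to run the analysis on $\Gamma_0$ and $\Gamma_{0,r-1}$ in parallel, so the same alternative (direct $C_2$ factor or not) is taken by both, reducing the maximality check to the known maximality of $S_{n-3}$ in $S_{n-2}$.
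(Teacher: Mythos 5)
Your overall skeleton (induction for $\Gamma_{r-1}$, then Lemma~\ref{max} applied to the pair $\Gamma_0$, $\Gamma_{r-1}$ with the maximality of $\Gamma_{0,r-1}$) matches the paper's, and your identification of the two $\Gamma_0$-orbits and of the sesqui-extension structure of $\Gamma_0$ is correct. The gap is in how you certify that $\Gamma_0$ is a string C-group. You prove only that the restriction $\Gamma_0'$ of $\Gamma_0$ to the large orbit is isomorphic to $S_{n-2}$ as an abstract group (via a connected family of transpositions); that says nothing about the intersection property of $\Gamma_0'$ with respect to its distinguished generators, which is exactly the hard content here --- $\Gamma_0'$ is the group of graph $(A)$ on $n-2$ points (labels shifted by one), and its C-group property is a separate theorem (Proposition~\ref{A}, resting on Lemma~21 of \cite{fl}), not a consequence of knowing the abstract isomorphism type. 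Moreover, even granting that $\Gamma_0'$ is a string C-group, Lemma~\ref{se0}(4) transfers the property only when $\tau\notin\Gamma^*$, and here $\tau$ \emph{does} lie in $\Gamma_0$: the element $(\rho_1\rho_2\rho_3\rho_2\rho_3\rho_4\rho_3)^7$ acts trivially on the large orbit and as the transposition $\tau$ on the two-point orbit, which is precisely how one sees $\Gamma_0\cong C_2\times S_{n-2}$. So the inheritance step you invoke is unavailable on both counts, and your closing remark that the bookkeeping ``reduces to'' deciding whether $\tau\in\Gamma_0$ leaves the harder branch (the one that actually occurs) unresolved.

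The paper circumvents this by a nested application of Lemma~\ref{max}: it first shows $\Gamma_{0,1}$ is a string C-group (using $\Gamma_{0,1,2}$, a sesqui-extension of a simplex, together with $\Gamma_{0,1,r-1}$, which is a parabolic subgroup of $\Gamma_{r-1}$ and hence a C-group by induction), and then deduces that $\Gamma_0$ is a string C-group from the pair $\Gamma_{0,1}$, $\Gamma_{0,r-1}$. The sesqui-extension over graph $(A)$ is used only to pin down the isomorphism types $\Gamma_0\cong C_2\times S_{n-2}$ and $\Gamma_{0,r-1}\cong C_2\times S_{n-3}$ needed for the maximality check in the final application of Lemma~\ref{max}. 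To repair your argument you would either need to reproduce that descent, or explicitly establish the intersection property for the graph-$(A)$ subgroup and then justify the transfer in the case $\tau\in\Gamma^*$, which Lemma~\ref{se0} as stated does not cover.
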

\begin{proof}
We use Lemmas~\ref{max} and \ref{se0} to show that $\Gamma$ is a C-group.  The group $\Gamma_{r-1}$ is assumed to be a C-group by induction.  To show that $\Gamma_0$ is a C-group, we consider the groups $\Gamma_{0,1}$, $\Gamma_{0,1,2}$, $\Gamma_{0,r-1}$, $\Gamma_{0,1,r-1}$, and $\Gamma_{0,1,2,r-1}$.  The groups $\Gamma_{0,1,2}$ and $\Gamma_{0,1,2,r-1}$ are sesqui-extensions of a simplex and thus, by Lemma~\ref{se0}, are both string C-groups isomorphic to $C_2 \times S_{n-6}$ and $C_2 \times S_{n-7}$ respectively.  Note also that $\Gamma_{0,r-1}$ and $\Gamma_{0,1,r-1}$ are string C-groups as they are parabolic subgroups of $\Gamma_{r-1}$.  By Lemma~\ref{max}, since $\Gamma_{0,1,2,r-1}$ is maximal in $\Gamma_{0,1,2}$ and both $\Gamma_{0,1,2}$ and $\Gamma_{0,1,r-1}$ are string C-groups, we conclude that $\Gamma_{0,1}$ is also a string C-group.

The group $\Gamma_{0,1}$ is a sesqui-extension of a string C-group isomorphic to $S_{n-4}$ (see~\cite{fl}).  Furthermore, the element $(\rho_3 \rho_2\rho_3 \rho_4\rho_3)^5$ acts as identity on the orbit of size $n-4$ and thus $\Gamma_{0,1} \cong 2 \times  S_{n-4}$.  Similarly $\Gamma_{0,1,r-1} \cong 2 \times  S_{n-5}$.  Since both $\Gamma_{0,1}$ and $\Gamma_{0,r-1}$  are string C-groups, and $\Gamma_{0,1,r-1}$ is maximal in $\Gamma_{0,1}$, we conclude that $\Gamma_{0}$ is a string C-group.

Finally, the groups $\Gamma_{0}$ and $\Gamma_{0,r-1}$ are sesqui-extensions of string C-groups isomorphic to $S_{n-2}$ (see  $(A)$ in Table~\ref{tab:allpossi}) and $S_{n-3}$, respectively.   Furthermore, the element $(\rho_1 \rho_2\rho_3 \rho_2\rho_3 \rho_4\rho_3)^7$ acts trivially on the larger orbit and as a transposition on the smaller orbit.  Thus $\Gamma_0 \cong 2 \times  S_{n-2}$ and $\Gamma_{0,r-1} \cong 2 \times  S_{n-3}$.  Since both $\Gamma_{0}$ and $\Gamma_{r-1}$  are string C-groups, and $\Gamma_{0,r-1}$ is maximal in $\Gamma_{0}$, we conclude that $\Gamma$ is a string C-group.
\end{proof}

\begin{proposition}\label{E}
The sggi $\Gamma$, described by the permutation representation graph $(E)$, is a C-group.
\end{proposition}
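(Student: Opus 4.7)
The plan is to apply Lemma~\ref{max}, mirroring the template of Propositions~\ref{A}, \ref{K} and \ref{B}: identify the isomorphism types of $\Gamma_0$, $\Gamma_{r-1}$, and $\Gamma_{0,r-1}$, and then establish maximality of $\Gamma_{0,r-1}$ in $\Gamma_0$. Direct induction on $n$ is not available, because deleting $\rho_{r-1}$ from graph $(E)$ does not produce another copy of $(E)$; instead I would reduce both maximal parabolic subgroups to known smaller C-groups via sesqui-extensions and Lemma~\ref{se0}.

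First, for $\Gamma_{r-1}$: removing $\rho_{n-5}$ disconnects the CPR graph into a main orbit of size $n-2$---carrying exactly the square together with the linear chain of labels $1,2,\ldots,n-6$, i.e., the permutation representation graph $(A)$ on $n-2$ points---and a $2$-element orbit (the two endpoints of the fold-back) on which only $\rho_{n-6}$ acts non-trivially. By Proposition~\ref{A} the main-orbit action is a string C-group isomorphic to $S_{n-2}$, and Lemma~\ref{se0} (with $\rho_{n-6}$ as the distinguished generator) then yields that $\Gamma_{r-1} \cong C_2 \times S_{n-2}$ is a string C-group. A parallel analysis handles $\Gamma_0$: removing $\rho_0$ splits off a $2$-element orbit (joined only by a vertical $2$-edge), and the induced label sequence on the remaining size-$(n-2)$ orbit is $2,1,2,3,\ldots,n-5,n-6$, which after the relabelling $\sigma_i := \rho_{i+1}$ becomes $1,0,1,2,\ldots,n-6,n-7$---exactly graph $(K)$ on $n-2$ points. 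By Proposition~\ref{K} this action is a string C-group isomorphic to $S_{n-2}$, and Lemma~\ref{se0} (now with $\rho_2$ as distinguished generator) gives $\Gamma_0 \cong C_2 \times S_{n-2}$.

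Finally, to verify maximality of $\Gamma_{0,r-1}$ in $\Gamma_0$, observe that $\Gamma_{0,r-1}$ acts on three orbits of sizes $2, n-4, 2$. A short permutation-theoretic computation---combining the transposition $\rho_1$, the double transposition $\rho_2$, the consecutive transpositions $\rho_3,\ldots,\rho_{n-6}$, and suitable conjugations---shows that the middle-orbit action is the full $S_{n-4}$, giving $\Gamma_{0,r-1} \cong C_2 \times S_2 \times S_{n-4}$ of order $4(n-4)!$. Inside $\Gamma_0 \cong C_2 \times S_{n-2}$ this embeds as $C_2 \times (S_2 \times S_{n-4})$, where the factor $S_2 \times S_{n-4}$ is the setwise stabilizer in $S_{n-2}$ of a $2$-subset---a classical maximal subgroup of $S_{n-2}$ for $n-2 \geq 5$. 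Hence $\Gamma_{0,r-1}$ is maximal in $\Gamma_0$; together with the evident fact that $\rho_{r-1} \notin \Gamma_{r-1}$ (the two endpoints of the $(n-5)$-edge lie in different orbits), Lemma~\ref{max} gives that $\Gamma$ is a string C-group. The main obstacle will be the explicit check that the middle-orbit action generates all of $S_{n-4}$ rather than stabilising some non-trivial block system; this is routine but requires careful conjugation bookkeeping.
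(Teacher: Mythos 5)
Your overall strategy is the paper's: apply Lemma~\ref{max} to $\Gamma_0$, $\Gamma_{r-1}$ and $\Gamma_{0,r-1}$. Your treatment of $\Gamma_{r-1}$ --- a sesqui-extension of the $(A)$-group on $n-2$ points with respect to the \emph{last} generator $\rho_{r-2}$ of $\Gamma_{r-1}$ --- matches the paper, and the maximality of $\Gamma_{0,r-1}$ in $\Gamma_0$ via the setwise stabilizer of a $2$-subset of the big orbit is a reasonable fleshing-out of what the paper only asserts (modulo the order computation you yourself flag).

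The genuine gap is in your claim that $\Gamma_0$ is a string C-group. You correctly recognize $\Gamma_0$ as a sesqui-extension of the $(K)$-group on $n-2$ points with respect to $\rho_2$ and $\tau=(a\,a')$, but $\rho_2$ is neither the first nor the last generator of $\Gamma_0=\langle\rho_1,\ldots,\rho_{r-1}\rangle$, and by your own conclusion $\tau\in\Gamma_0$ (that is exactly what produces the factor $C_2$). Lemma~\ref{se0}(4) transfers the intersection property only when $\tau\notin\Gamma^*$, and the unrestricted transfer is simply false for an interior distinguished generator: take $\Gamma=\langle(1\,2),(2\,3),(3\,4)\rangle\cong S_4$ and $\Gamma^*=\langle(1\,2),(2\,3)(5\,6),(3\,4)\rangle$. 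Then $\langle\rho_0^*,\rho_1^*\rangle$ and $\langle\rho_1^*,\rho_2^*\rangle$ both contain $(5\,6)$ and hence $(2\,3)$, so their intersection contains a group of order $4$ rather than $\langle\rho_1^*\rangle$ of order $2$; thus $\Gamma^*$ fails the intersection property even though $\Gamma$ is a string C-group and $\Gamma^*\cong\Gamma\times 2$. So identifying the big orbit of $\Gamma_0$ with graph $(K)$ gives you the isomorphism type of $\Gamma_0$ as a permutation group, but not the intersection property. The paper closes exactly this point by a second application of Lemma~\ref{max} inside $\Gamma_0$: it shows $\Gamma_{0,1}$ is a string C-group (a sesqui-extension of a simplex with respect to an \emph{extreme} generator), observes that $\Gamma_{0,r-1}$ is a string C-group because it is a parabolic subgroup of the already-verified $\Gamma_{r-1}$, and checks that $\Gamma_{0,1,r-1}$ is maximal in $\Gamma_{0,1}$. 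You need an argument of this kind in place of the one-line appeal to Lemma~\ref{se0}.
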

\begin{proof}
To show that $\Gamma$ is a C-group, we consider the groups $\Gamma_{0}$, $\Gamma_{r-1}$, $\Gamma_{0,1}$, $\Gamma_{0,r-1}$, and $\Gamma_{0,1,r-1}$.
The group $\Gamma_{r-1}$ is a sesqui-extension of a string C-group isomorphic to $S_{n-2}$ (see  $(A)$ in Table~\ref{tab:allpossi}) and thus is itself a string C-group by Lemma~\ref{se0}.
Furthermore, the element $(\rho_{r-2} \rho_{r-3})^3$ acts as identity on the larger orbit of $\Gamma_{r-1}$ and thus $\Gamma_{r-1} \cong S_{n-2} \times 2$.  The group $\Gamma_{0,1}$ is a sesqui-extension of a string C-group isomorphic to $S_{n-2}$ (see~\cite{fl})  and is thus a string C-group.   Furthermore, the element $(\rho_2 \rho_3)^3$ acts trivially on the larger orbit of $\Gamma_{0,1}$ and thus $\Gamma_{0,1} \cong S_{n-4} \times 2$.  Similarly, $\Gamma_{0,1,r-1} \cong S_{n-6} \times 2 \times 2$, which is maximal in $\Gamma_{0,1}$.  The group $\Gamma_{0,r-1}$ is also a string C-group as it is a parabolic subgroup of $\Gamma_{r-1}$, and thus by Lemma~\ref{max}, $\Gamma_0$ is a string C-group.  The element $(\rho_2 \rho_1 \rho_2 \rho_3 \rho_2)^5$ acts trivially on the larger orbit of $\Gamma_0$ and thus $\Gamma_0 \cong S_{n-2} \times 2$,  and similarly $\Gamma_{0,r-1}  \cong S_{n-4}\times 2 \times 2$ which is maximal in $\Gamma_0$.  Since both $\Gamma_0$ and $\Gamma_{r-1}$ are string C-groups, and $\Gamma_{0,r-1}$ is maximal in $\Gamma_0$, by Lemma~\ref{max}, $\Gamma$ is a string C-group.
\end{proof}

\begin{proposition}\label{I2}
The sggi $\Gamma$, described by the permutation representation graph $(I2)$, is a C-group.
\end{proposition}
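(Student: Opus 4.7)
The plan is to apply Lemma~\ref{max} to the triple $(\Gamma_0,\Gamma_{r-1},\Gamma_{0,r-1})$, in the spirit of Propositions~\ref{B} and~\ref{E}. The bulk of the work is identifying the isomorphism types of $\Gamma_0$ and $\Gamma_{r-1}$ and verifying the maximality of $\Gamma_{0,r-1}$ in $\Gamma_0$.

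First I would handle $\Gamma_{r-1}$. The removed generator $\rho_{n-5}$ is a single transposition near the right end of the top chain, so $\Gamma_{r-1}$ has two orbits of sizes $n-2$ and $2$, the smaller one being moved only by $\rho_{n-6}$. The restricted action on the large orbit has CPR graph matching graph $(G)$ on $n-2$ points: the vertical double edge labelled $1,2$, the central vertical $2$-edge, the bottom $0$-edge, and the chain finishing in the single tail of $\rho_{n-6}$ all fit. By Proposition~\ref{G} this restriction is a string C-group isomorphic to $S_{n-2}$. Since the action on the size-$2$ orbit does not factor through the sign homomorphism on the large orbit (for instance, $\rho_2$ restricted to the large orbit is odd but acts trivially on $\{v_{n-3},v_{n-2}\}$), Lemma~\ref{se0} gives $\Gamma_{r-1}\cong C_2\times S_{n-2}$, which is a string C-group.

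Next I would handle $\Gamma_0$. Removing $\rho_0$ produces two orbits, $\{v_1,w_1\}$ of size $2$ and the remaining $n-2$ vertices, with $\rho_1$ and $\rho_2$ both acting as the same transposition on the small orbit. Shifting the labels of the restricted action down by one, the CPR graph on the large orbit is exactly graph $(K)$ on $n-2$ vertices: the branch $w_2\stackrel{2}{-}v_2\stackrel{1}{-}v_3$ provides the initial ``$1,0$'' portion of $(K)$ after the shift, and the fact that $\rho_{n-6}$ contributes two disjoint chain edges at the end reproduces the closing ``$n'-4,n'-5$'' pattern of $(K)$ with $n'=n-2$. By Proposition~\ref{K} this restriction is a string C-group isomorphic to $S_{n-2}$. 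Since $\rho_1$ restricted to the large orbit is odd while $\rho_2$ restricted to the large orbit is even, yet both act non-trivially on $\{v_1,w_1\}$, the action on the small orbit cannot be the sign of the action on the large orbit, and Lemma~\ref{se0} yields $\Gamma_0\cong C_2\times S_{n-2}$. An analogous analysis gives $\Gamma_{0,r-1}\cong C_2\times S_{n-3}$, which is maximal in $\Gamma_0\cong C_2\times S_{n-2}$ because $S_{n-3}$ is maximal in $S_{n-2}$.

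With $\Gamma_0$ and $\Gamma_{r-1}$ both string C-groups and $\Gamma_{0,r-1}$ maximal in $\Gamma_0$, Lemma~\ref{max} then delivers that $\Gamma$ is a string C-group. The main obstacle I expect is the identification of the restriction of $\Gamma_0$ to its large orbit with the group of graph $(K)$ on $n-2$ vertices: this requires both shifting the labels and noticing that $\rho_2$ simultaneously contributes a chain edge and a branch edge, which together realise the characteristic repeated-label pattern at the start of $(K)$. Once this identification is in place, Lemma~\ref{se0} and Lemma~\ref{max} close the argument with routine bookkeeping.
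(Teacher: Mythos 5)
Your top-level strategy (Lemma~\ref{max} applied to $\Gamma_0$, $\Gamma_{r-1}$, $\Gamma_{0,r-1}$) and your treatment of $\Gamma_{r-1}$ as a sesqui-extension of the group of graph $(G)$ on $n-2$ points match the paper. The gap is in $\Gamma_0$. Lemma~\ref{max} requires $\Gamma_0$ to be a string C-group, and you never establish this. Your identification of the restriction of $\Gamma_0$ to its large orbit with graph $(K)$ on $n-2$ points (after shifting labels) is correct, but it does not transfer the intersection property to $\Gamma_0$ itself: in graph $(I)$ \emph{both} $\rho_1$ and $\rho_2$ act as the same transposition on the small orbit $\{t_1,b_1\}$, so $\Gamma_0=\langle \sigma_1\tau,\sigma_2\tau,\sigma_3,\ldots\rangle$ is \emph{not} a sesqui-extension in the sense of Section~\ref{prelim} (exactly one generator may be twisted by $\tau$), and Lemma~\ref{se0} does not apply. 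Your parity argument does correctly pin down the abstract isomorphism type $\Gamma_0\cong C_2\times S_{n-2}$ (it is in effect a Goursat argument, not an application of Lemma~\ref{se0}), but an abstract isomorphism type says nothing about whether the distinguished generators $\rho_1,\ldots,\rho_{r-1}$ of $\Gamma_0$ satisfy the intersection property. The paper closes exactly this gap by a nested application of Lemma~\ref{max}: it shows $\Gamma_{0,1}$ is a string C-group (a genuine sesqui-extension situation), notes that $\Gamma_{0,r-1}$ is a string C-group because it is a parabolic subgroup of the already-verified $\Gamma_{r-1}$, and checks that $\Gamma_{0,1,r-1}$ is maximal in $\Gamma_{0,1}$. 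Some such argument is indispensable in your write-up.

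A secondary error: you claim $\Gamma_{0,r-1}\cong C_2\times S_{n-3}$. Removing $\rho_{r-1}=(t_{n-4},t_{n-3})$ from $\Gamma_0$ detaches the two rightmost vertices of the top chain (they are still joined by the second $(n-6)$-edge), so $\Gamma_{0,r-1}$ has orbits of sizes $2$, $n-4$ and $2$, and is isomorphic to $S_{n-4}\times C_2\times C_2$, as in the paper. Maximality in $\Gamma_0\cong C_2\times S_{n-2}$ still holds, since $S_{n-4}\times S_2$ is a maximal intransitive subgroup of $S_{n-2}$ for $n\geq 11$, but your stated justification (``$S_{n-3}$ is maximal in $S_{n-2}$'') rests on the wrong orbit structure and should be corrected.
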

\begin{proof}
To show that $\Gamma$ is a C-group, we consider the groups $\Gamma_{0}$, $\Gamma_{r-1}$, $\Gamma_{0,1}$, $\Gamma_{0,r-1}$, and $\Gamma_{0,1,r-1}$.
The group $\Gamma_{r-1}$ is a sesqui-extension of a string C-group isomorphic to $S_{n-2}$ (see  $(G)$ in Table~\ref{tab:allpossi}) and thus is itself a string C-group by Lemma~\ref{se0}.
Furthermore, the element $(\rho_{r-2} \rho_{r-3})^3$ acts as identity on the larger orbit of $\Gamma_{r-1}$ and thus $\Gamma_{r-1} \cong S_{n-2} \times 2$.
The group $\Gamma_{0,1}$ is a sesqui-extension of a string C-group isomorphic to $S_{n-2}$ (see~\cite{fl}) and is thus a string C-group.   Furthermore, the element $(\rho_2 \rho_3)^3$ acts trivially on the larger orbit of $\Gamma_{0,1}$ and thus $\Gamma_{0,1} \cong S_{n-4} \times 2$.  Similarly, $\Gamma_{0,1,r-1} \cong S_{n-6} \times 2 \times 2$, which is maximal in $\Gamma_{0,1}$.
The group $\Gamma_{0,r-1}$ is also a string C-group as it is a parabolic subgroup of $\Gamma_{r-1}$, and thus by Lemma~\ref{max}, $\Gamma_0$ is a string C-group.  The element $(\rho_2 \rho_1 \rho_2 \rho_3 \rho_4 \rho_3)^3$ acts trivially on the larger orbit of $\Gamma_0$ and thus $\Gamma_0 \cong S_{n-2} \times 2$,  and similarly $\Gamma_{0,r-1}  \cong S_{n-4}\times 2 \times 2$ which is maximal in $\Gamma_0$.  Since both $\Gamma_0$ and $\Gamma_{r-1}$ are string C-groups, and $\Gamma_{0,r-1}$ is maximal in $\Gamma_0$, by Lemma~\ref{max}, $\Gamma$ is a string C-group.
\end{proof}

\begin{proposition}\label{N1}
The sggi $\Gamma$, described by the permutation representation graph $(N1)$, is a C-group.
\end{proposition}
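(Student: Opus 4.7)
The plan is to apply Lemma~\ref{max} to the parabolic pair $\Gamma_0$, $\Gamma_{n-5}$, following the same template as Propositions~\ref{K} and~\ref{E}. Labelling the vertices $v_1, \ldots, v_n$ along the path of $(N1)$, the generators act as $\rho_0 = (v_1, v_2)(v_3, v_4)$, $\rho_1 = (v_2, v_3)(v_4, v_5)$, $\rho_i = (v_{i+3}, v_{i+4})$ for $2 \leq i \leq n-7$, $\rho_{n-6} = (v_{n-3}, v_{n-2})(v_{n-1}, v_n)$, and $\rho_{n-5} = (v_{n-2}, v_{n-1})$.

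First I would handle $\Gamma_{n-5}$. Removing $\rho_{n-5}$ disconnects the graph into a big component on $\{v_1, \ldots, v_{n-2}\}$ that is exactly $(L1)$ of rank $n-5$, yielding $S_{n-2}$ by Proposition~\ref{L1}, together with the isolated $\rho_{n-6}$-edge on $\{v_{n-1}, v_n\}$. Thus $\Gamma_{n-5}$ is a sesqui-extension of the big-component group with respect to $\rho_{n-6}$ and $\tau := (v_{n-1}, v_n)$. The identity $(\rho_{n-7}\rho_{n-6})^3 = \tau$ exhibits $\tau$ inside $\Gamma_{n-5}$ with trivial big-orbit projection, so $\Gamma_{n-5} \cong S_{n-2} \times C_2$; since $\tau$ does not lie in the base group, Lemma~\ref{se0}(4) shows $\Gamma_{n-5}$ is a string C-group.

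The analysis of $\Gamma_0$ is parallel. Removing $\rho_0$ fixes $v_1$ and leaves the pair $\{v_2, v_3\}$ (on which only $\rho_1$ acts, as $\tau' := (v_2, v_3)$) together with $\{v_4, \ldots, v_n\}$; the induced CPR graph on the latter is, after shifting labels by $-1$, the dual of the unique rank $n-5$ CPR graph for $S_{n-3}$ of~\cite{fl}, so the big-orbit action is a string C-group of type $S_{n-3}$. Again $\Gamma_0$ is a sesqui-extension, now with respect to $\rho_1$ and $\tau'$; the witness $(\rho_2\rho_1)^3 = \tau'$ certifies the direct-product structure $\Gamma_0 \cong C_2 \times S_{n-3}$, and Lemma~\ref{se0}(4) gives that $\Gamma_0$ is a string C-group.

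To invoke Lemma~\ref{max} it remains to verify $\rho_{n-5} \notin \Gamma_{n-5}$ and that $\Gamma_{0, n-5}$ is maximal in $\Gamma_0$. The first is clear because every generator of $\Gamma_{n-5}$ preserves the partition $\{v_1, \ldots, v_{n-2}\} \sqcup \{v_{n-1}, v_n\}$ while $\rho_{n-5}$ crosses it. For maximality, an analogous sesqui-extension computation gives $\Gamma_{0, n-5} \cong C_2 \times (S_{n-5} \times S_2)$, sitting inside $\Gamma_0 \cong C_2 \times S_{n-3}$ as $C_2$ times the Young subgroup of $S_{n-3}$ stabilizing $\{v_4, \ldots, v_{n-2}\} \sqcup \{v_{n-1}, v_n\}$; because $n \geq 11$ forces the parts $n-5 \geq 6$ and $2$ to be distinct, this Young subgroup is a maximal intransitive subgroup of $S_{n-3}$, so $\Gamma_{0, n-5}$ is maximal in $\Gamma_0$. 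Lemma~\ref{max} then yields the proposition. The main obstacle will be producing the two witnesses $(\rho_2\rho_1)^3$ and $(\rho_{n-7}\rho_{n-6})^3$ that force direct-product rather than diagonal isomorphism types for $\Gamma_0$ and $\Gamma_{n-5}$; the remainder is routine bookkeeping with orbits and Young subgroups.
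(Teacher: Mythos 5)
Your proposal is correct and follows essentially the same route as the paper: both identify $\Gamma_0$ and $\Gamma_{r-1}$ as sesqui-extensions of string C-groups isomorphic to $S_{n-3}$ and $S_{n-2}$, use the cubes $(\rho_1\rho_2)^3$ and $(\rho_{r-3}\rho_{r-2})^3$ to exhibit the extra $C_2$ factors, and conclude via Lemma~\ref{max} from the maximality of $\Gamma_{0,r-1}\cong S_{n-5}\times 2\times 2$ in $\Gamma_0\cong S_{n-3}\times 2$. Your version merely makes explicit some details the paper leaves implicit (the identification of the big component of $\Gamma_{r-1}$ with $(L1)$, the Young-subgroup maximality argument, and $\rho_{r-1}\notin\Gamma_{r-1}$).
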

\begin{proof}
To show that $\Gamma$ is a C-group, we consider the groups $\Gamma_{0}$, $\Gamma_{r-1}$, and $\Gamma_{0,r-1}$.
The groups $\Gamma_0$ and $\Gamma_{r-1}$ are both sesqui-extensions of string C-groups which are isomorphic to $S_{n-3}$ and $S_{n-2}$ respectively (see~\cite{fl}), and thus are themselves string C-groups by Lemma~\ref{se0}.
Furthermore, the element $(\rho_{r-2} \rho_{r-3})^3$ acts as identity on the larger orbit of $\Gamma_{r-1}$ and thus $\Gamma_{r-1} \cong S_{n-2} \times 2$.   Similarly $(\rho_{1} \rho_{2})^3$ acts as identity on the larger orbit of $\Gamma_{0}$ and thus $\Gamma_{0} \cong S_{n-3} \times 2$, and $\Gamma_{0,r-1} \cong S_{n-5} \times 2 \times 2$.
Since both $\Gamma_0$ and $\Gamma_{r-1}$ are string C-groups, and $\Gamma_{0,r-1}$ is maximal in $\Gamma_0$, by Lemma~\ref{max}, $\Gamma$ is a string C-group.
\end{proof}

\begin{proposition}\label{N2}
The sggi $\Gamma$, described by the permutation representation graph $(N2)$, is a C-group.
\end{proposition}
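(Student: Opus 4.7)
The plan is to apply Lemma~\ref{max} to $\Gamma$ using the parabolic subgroups $\Gamma_0$, $\Gamma_{r-1}$, and $\Gamma_{0,r-1}$, after establishing that $\Gamma_0$ and $\Gamma_{r-1}$ are string C-groups and identifying their isomorphism types.

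First I would study $\Gamma_{r-1}$. Removing $\rho_{r-1}$ disconnects the CPR graph into a large component on $n-2$ vertices whose edge-labelling is of type $(L2)$ (and so defines the string C-group $S_{n-2}$ by Proposition~\ref{L2}) and a small component on $2$ vertices on which only $\rho_{n-6}$ acts. Thus $\Gamma_{r-1}$ is a sesqui-extension of $S_{n-2}$ and is a string C-group by Lemma~\ref{se0}. A direct calculation gives $(\rho_{r-2}\rho_{r-3})^3=(v_{n-2},v_{n-1})$, so $\Gamma_{r-1}\cong S_{n-2}\times C_2$.

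Next, $\Gamma_0$ has two orbits $\{v_0,v_1,v_2\}$ and $\{v_3,\ldots,v_{n-1}\}$, of sizes $3$ and $n-3$. On the small orbit $\rho_1$ and $\rho_2$ generate $S_3$; on the large orbit the CPR graph, after shifting the generator indices down by one, is the dual of the rank-$(n-5)$ CPR graph for $S_{n-3}$ from~\cite{fl}, so the large-orbit action is $S_{n-3}$. The elements $(\rho_2\rho_3)^3=(v_0,v_1)$ and its $\rho_1$-conjugate $(v_0,v_2)$ both act trivially on the large orbit, showing that the kernel of the large-orbit action of $\Gamma_0$ contains all of $S_3$, so that $\Gamma_0\cong S_3\times S_{n-3}$. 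To show $\Gamma_0$ is itself a C-group I would apply Lemma~\ref{max} once more to $\Gamma_0$ with $\Gamma_{0,1}\cong C_2\times S_{n-4}$ and $\Gamma_{0,r-1}\cong S_3\times S_{n-4}\times C_2$, where $\Gamma_{0,1,r-1}\cong C_2\times S_{n-5}\times C_2$; the subgroup $\Gamma_{0,1}$ in turn requires a further recursive invocation of Lemma~\ref{max} with $\Gamma_{0,1,2}\cong S_{n-5}$. Each maximality check reduces to the standard fact that the Young subgroup $S_k\times S_2$ is maximal in $S_{k+2}$.

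Finally, $\rho_{r-1}=(v_{n-3},v_{n-2})$ swaps vertices in different orbits of $\Gamma_{r-1}$, so $\rho_{r-1}\notin\Gamma_{r-1}$; and $\Gamma_{0,r-1}$ is maximal in $\Gamma_0$ because $S_{n-4}\times S_2$ is maximal in $S_{n-3}$. Lemma~\ref{max} then yields that $\Gamma$ is a string C-group. The principal obstacle is the two-level recursion of Lemma~\ref{max} together with the careful bookkeeping for the orbit structures and subgroup isomorphism types at each stage; once the large-orbit actions are identified with the rank-$(m-2)$ C-groups of~\cite{fl} via duality, the remaining computations (the explicit transposition products and the Young-subgroup maximalities) are routine.
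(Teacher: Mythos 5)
Your proposal follows essentially the same route as the paper: $\Gamma_{r-1}$ is the sesqui-extension of the $(L2)$ string C-group $S_{n-2}$, $\Gamma_0\cong S_3\times S_{n-3}$ is shown to be a C-group by a second application of Lemma~\ref{max} through $\Gamma_{0,1}$ and $\Gamma_{0,r-1}$, and Lemma~\ref{max} then finishes the job (your extra recursion for $\Gamma_{0,1}$ via $\Gamma_{0,1,2}$ is a harmless substitute for the paper's direct appeal to Lemma~\ref{se0}). The only defects are off-by-one slips in the orbit sizes: the middle orbit of $\Gamma_{0,r-1}$ is $\{v_3,\ldots,v_{n-3}\}$ with $n-5$ points, so $\Gamma_{0,r-1}\cong S_3\times S_{n-5}\times C_2$ (not $S_3\times S_{n-4}\times C_2$) and likewise $\Gamma_{0,1,r-1}\cong C_2\times S_{n-6}\times C_2$; consequently the final maximality check rests on $S_{n-5}\times S_2$ being maximal in $S_{n-3}$, whereas your stated justification ``$S_{n-4}\times S_2$ is maximal in $S_{n-3}$'' is not even meaningful as a Young subgroup. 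With the indices corrected the argument is exactly the paper's and goes through.
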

\begin{proof}
To show that $\Gamma$ is a C-group, we consider the groups $\Gamma_{0}$, $\Gamma_{r-1}$, $\Gamma_{0,1}$, $\Gamma_{0,r-1}$, and $\Gamma_{0,1,r-1}$.
The group $\Gamma_{r-1}$ is a sesqui-extension of a string C-group isomorphic to $S_{n-2}$ (see  $(L2)$ in Table~\ref{tab:allpossi}) and thus is itself a string C-group by Lemma~\ref{se0}.
Furthermore, the element $(\rho_{r-2} \rho_{r-3})^3$ acts as identity on the larger orbit of $\Gamma_{r-1}$ and thus $\Gamma_{r-1} \cong S_{n-2} \times 2$.
The group $\Gamma_{0,1}$ is a sesqui-extension of a string C-group isomorphic to $S_{n-2}$ (see~\cite{fl}) and is thus a string C-group.   Furthermore, the element $(\rho_2 \rho_3)^3$ acts trivially on the larger orbit of $\Gamma_{0,1}$ and thus $\Gamma_{0,1} \cong S_{n-4} \times 2$.  Similarly, $\Gamma_{0,1,r-1} \cong S_{n-6} \times 2 \times 2$, which is maximal in $\Gamma_{0,1}$.
The group $\Gamma_{0,r-1}$ is also a string C-group as it is a parabolic subgroup of $\Gamma_{r-1}$, and thus by Lemma~\ref{max}, $\Gamma_0$ is a string C-group.
Similar to the proof of Proposition~\ref{L2}, it can be shown that $\Gamma_0 \cong S_{n-3} \times S_3$,  and similarly $\Gamma_{0,r-1}  \cong S_{n-5}\times S_3 \times 2$ which is maximal in $\Gamma_0$.  Since both $\Gamma_0$ and $\Gamma_{r-1}$ are string C-groups, and $\Gamma_{0,r-1}$ is maximal in $\Gamma_0$, by Lemma~\ref{max}, $\Gamma$ is a string C-group.
\end{proof}

\begin{proposition}\label{N3}
The sggi $\Gamma$, described by the permutation representation graph $(N3)$, is a C-group.
\end{proposition}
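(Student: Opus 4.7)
The strategy mirrors the proofs of Propositions~\ref{N1} and~\ref{N2}: apply Lemma~\ref{max} twice, first to conclude that $\Gamma_0$ is a string C-group, then to combine $\Gamma_0$ with $\Gamma_{r-1}$ to conclude for $\Gamma$ itself. The only new feature of $(N3)$ compared to $(N2)$ is the double edge $\{0,2\}$ at the start of the diagram; this changes the orbit structure of $\Gamma_0$ but does not alter the skeleton of the argument. For $\Gamma_{r-1}$: removing $\rho_{r-1}=\rho_{n-5}$ splits the CPR graph into a main component which is an $(L3)$-type subgraph on $n-2$ vertices (whose group is a string C-group isomorphic to $S_{n-2}$ by Proposition~\ref{L3}) together with a pendant $(n-6)$-edge on two points. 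Thus $\Gamma_{r-1}$ is a sesqui-extension and Lemma~\ref{se0} applies; since $(\rho_{r-2}\rho_{r-3})^3$ acts trivially on the main orbit but as a transposition on the pendant, we obtain $\Gamma_{r-1}\cong S_{n-2}\times 2$.

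Next, I verify that $\Gamma_0$ is a string C-group. Removing $\rho_0$ leaves two orbits, of sizes $3$ and $n-3$: on the 3-point orbit the generators $\rho_1$ and $\rho_2$ act as $(v_2,v_3)$ and $(v_1,v_2)$, generating $S_3$, while the large orbit receives the remaining generators with essentially the same skeleton as in $(N2)$. Considering the parabolics $\Gamma_{0,1}$, $\Gamma_{0,r-1}$, and $\Gamma_{0,1,r-1}$: as in the proof of Proposition~\ref{N2}, $\Gamma_{0,1}$ is a sesqui-extension of a string C-group isomorphic to $S_{n-2}$ from~\cite{fl}, and the element $(\rho_2\rho_3)^3$ is trivial on its large orbit, giving $\Gamma_{0,1}\cong S_{n-4}\times 2$; analogously $\Gamma_{0,1,r-1}\cong S_{n-6}\times 2\times 2$, which is maximal in $\Gamma_{0,1}$. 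Since $\Gamma_{0,r-1}$ is a parabolic of the string C-group $\Gamma_{r-1}$, it is itself a string C-group, and Lemma~\ref{max} then shows that $\Gamma_0$ is a string C-group.

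To finish, I imitate the argument of Proposition~\ref{L3} to identify isomorphism types: the elements $(\rho_2\rho_3)^3$ and $(\rho_1\rho_2\rho_3)^4$ act trivially on the large orbit of $\Gamma_0$ while producing, respectively, a transposition and a 3-cycle on the small orbit, which shows $\Gamma_0\cong S_3\times S_{n-3}$; the same reasoning gives $\Gamma_{0,r-1}\cong S_3\times S_{n-5}\times 2$. This latter inclusion takes the form $S_3\times(S_{n-5}\times S_2)\leq S_3\times S_{n-3}$, which is maximal because $S_{n-5}\times S_2$ is a maximal intransitive subgroup of $S_{n-3}$. A second application of Lemma~\ref{max} then yields that $\Gamma$ is a string C-group. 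The only subtle point will be confirming that the extra label-$2$ transposition in the initial double edge (the feature distinguishing $(N3)$ from $(N2)$) does not induce a Goursat-style diagonal identification between the two orbits of $\Gamma_0$, i.e., that the two exhibited elements genuinely generate a direct $S_3$ factor rather than a smaller subgroup.
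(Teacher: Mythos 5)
Your proof is correct and follows essentially the same route as the paper's: $\Gamma_{r-1}$ is a sesqui-extension of the $(L3)$ group and hence a string C-group isomorphic to $S_{n-2}\times 2$, $\Gamma_0\cong S_3\times S_{n-3}$ with $\Gamma_{0,r-1}\cong S_3\times S_{n-5}\times 2$ maximal in it, and Lemma~\ref{max} finishes. One simplification you missed: since $(N3)$ and $(N2)$ differ only in the generator $\rho_0$, the group $\Gamma_0$ is literally the same permutation group as in Proposition~\ref{N2}, so the $(N2)$ facts can simply be cited and your closing ``subtle point'' about a possible diagonal identification between the two orbits never arises.
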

\begin{proof}
To show that $\Gamma$ is a C-group, we consider the groups $\Gamma_{0}$, $\Gamma_{r-1}$, and $\Gamma_{0,r-1}$.
The group $\Gamma_0$ was shown to be a string C-group isomorphic to $S_3 \times S_{n-3}$ in the previous lemma.  The group $\Gamma_{r-1}$ is a sesqui-extension of a string C-group isomorphic to $S_{n-2}$ (see  $(L3)$ in Table~\ref{tab:allpossi}) and thus is itself a string C-group by Lemma~\ref{se0}.
Furthermore, the element $(\rho_{r-2} \rho_{r-3})^3$ acts as identity on the larger orbit of $\Gamma_{r-1}$ and thus $\Gamma_{r-1} \cong S_{n-2} \times 2$ and $\Gamma_{0,r-1} \cong S_3 \times S_{n-5} \times 2$.
Since both $\Gamma_0$ and $\Gamma_{r-1}$ are string C-groups, and $\Gamma_{0,r-1}$ is maximal in $\Gamma_0$, by Lemma~\ref{max}, $\Gamma$ is a string C-group.
\end{proof}

\begin{proposition}\label{O2i=0}
The sggi $\Gamma$, described by the permutation representation graph $(O2, \ i=0)$, is a C-group.
\end{proposition}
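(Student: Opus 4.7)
The plan is to apply Lemma~\ref{max} to the parabolic subgroups $\Gamma_0$ and $\Gamma_{r-1}$, following the pattern of Proposition~\ref{M2i=0}. The three ingredients needed are that $\Gamma_0$ and $\Gamma_{r-1}$ are string C-groups, that $\rho_{r-1}\notin\Gamma_{r-1}$, and that $\Gamma_{0,r-1}$ is maximal in $\Gamma_0$.

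First I would show $\Gamma_0\cong S_{n-1}$ is a string C-group. Deleting $\rho_0$ removes both $0$-labeled edges (the first edge and the $0$-part of the double $\{0,2\}$-edge), isolating the vertex $v_1$ and leaving a transitive action on $\{v_2,\ldots,v_n\}$ whose permutation representation graph, after the shift $\rho_j\mapsto\rho_{j-1}$, is exactly the $(N1)$ graph on $n-1$ points; Proposition~\ref{N1} then does the work.

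Next I would identify $\Gamma_{r-1}$ by induction on $n$. Removing $\rho_{n-5}$ disconnects the graph into a big orbit of size $n-2$ whose labels form the $(M2,\,i=0)$ graph on $n-2$ points, and a small orbit $\{v_{n-1},v_n\}$ joined by the final $(n-6)$-edge. The element $(\rho_{n-6}\rho_{n-7})^3$ should restrict to the identity on the big orbit (where $\rho_{n-6}$ and $\rho_{n-7}$ are two adjacent transpositions, generating an $S_3$ on three consecutive points) and to the non-trivial swap on the small orbit (where $\rho_{n-7}$ is trivial); this will give $\Gamma_{r-1}\cong S_{n-2}\times C_2$, with the $S_{n-2}$-factor a string C-group by Proposition~\ref{M2i=0} at $n-2$. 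The same recipe applied to $\Gamma_{0,r-1}=\langle\rho_1,\ldots,\rho_{n-6}\rangle$ should yield $\Gamma_{0,r-1}\cong S_{n-3}\times C_2$, this time invoking Proposition~\ref{L1} on the middle orbit of size $n-3$.

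Having identified these, I would conclude by observing that $\Gamma_{0,r-1}$ is the stabilizer in $\Gamma_0\cong S_{n-1}$ of the unordered partition $\{\{v_2,\ldots,v_{n-2}\},\{v_{n-1},v_n\}\}$ of $\{v_2,\ldots,v_n\}$ into parts of distinct sizes $n-3$ and $2$, and hence a maximal subgroup of $\Gamma_0$ for $n\geq 11$; and that $\rho_{n-5}$ swaps $v_{n-2}$ with $v_{n-1}$, mixing the two orbits of $\Gamma_{r-1}$ and therefore lying outside $\Gamma_{r-1}$. Lemma~\ref{max} then closes the argument. The main obstacle is justifying the direct-product decompositions $\Gamma_{r-1}\cong S_{n-2}\times C_2$ and $\Gamma_{0,r-1}\cong S_{n-3}\times C_2$, which reduces to checking that $(\rho_{n-6}\rho_{n-7})^3$ acts as claimed on each of the two orbits; this is the same routine short calculation used in the proofs of Propositions~\ref{B},~\ref{E},~\ref{N1}, and~\ref{N2}.
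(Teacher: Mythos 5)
Your proposal is correct and follows essentially the same route as the paper: both identify $\Gamma_0$ as the $(N1)$ group $S_{n-1}$, realize $\Gamma_{r-1}$ as a (sesqui-)extension $S_{n-2}\times C_2$ of the $(M2,i=0)$ group on the large orbit via the element $(\rho_{r-2}\rho_{r-3})^3$, obtain $\Gamma_{0,r-1}\cong S_{n-3}\times C_2$ analogously, and close with Lemma~\ref{max} using the maximality of $\Gamma_{0,r-1}$ in $\Gamma_0$. You merely make explicit a few points the paper leaves implicit (the partition-stabilizer argument for maximality and the verification that $\rho_{r-1}\notin\Gamma_{r-1}$), which is harmless.
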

\begin{proof}
To show that $\Gamma$ is a C-group, we consider the groups $\Gamma_{0}$, $\Gamma_{r-1}$, and $\Gamma_{0,r-1}$.
The group $\Gamma_{r-1}$ is a sesqui-extension of a string C-group which is isomorphic to $S_{n-2}$  (see  $(M2, \ i=0)$ in Table~\ref{tab:allpossi}), and thus is itself a string C-group by Lemma~\ref{se0}.
Furthermore, the element $(\rho_{r-2} \rho_{r-3})^3$ acts as identity on the larger orbit of $\Gamma_{r-1}$ and thus $\Gamma_{r-1} \cong S_{n-2} \times 2$.   Similarly, $\Gamma_{0,r-1} \cong S_{n-3} \times 2$.  The group $\Gamma_0$ is a string C-group isomorphic to $S_{n-1}$ (see  $(N1)$ in Table~\ref{tab:allpossi}).
Since both $\Gamma_0$ and $\Gamma_{r-1}$ are string C-groups, and $\Gamma_{0,r-1}$ is maximal in $\Gamma_0$, by Lemma~\ref{max}, $\Gamma$ is a string C-group.
\end{proof}

\begin{proposition}\label{P1}
The sggi $\Gamma$, described by the permutation representation graph $(P1)$, is a C-group.
\end{proposition}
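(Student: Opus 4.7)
The plan is to derive Proposition~\ref{P1} from another application of Lemma~\ref{max}, so I need to establish that $\Gamma_0$ and $\Gamma_{r-1}$ are string C-groups and that $\Gamma_{0,r-1}$ is maximal in $\Gamma_0$. For $\Gamma_{r-1}$, deleting the last edge (labeled $n-5$) fixes the final vertex and produces exactly the graph $(P1)$ on $n-1$ points, so $\Gamma_{r-1}$ is a string C-group by induction on $n$ (with the base case $n=11$ verified in \textsc{Magma}); by transitivity and the presence of a transposition $\Gamma_{r-1}\cong S_{n-1}$.

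The real work is in analyzing $\Gamma_0$. Removing the $0$-edges splits the support into three orbits $\{v_1,v_2\}$, $\{v_3,v_4\}$ and the large orbit $\{v_5,\ldots,v_n\}$, on which $\rho_1,\ldots,\rho_{r-1}$ act as the standard Coxeter generators $(v_5\,v_6),(v_6\,v_7),\ldots,(v_{n-1}\,v_n)$ of a copy of $S_{n-4}$; on the four remaining points only $\rho_1$ is nontrivial, acting as $\tau := (v_1\,v_2)(v_3\,v_4)$. The key computation is $(\rho_1\rho_2)^3 = \tau$, obtained by checking that $\rho_1\rho_2$ restricts to a $3$-cycle on $\{v_5,v_6,v_7\}$, equals $\tau$ on $\{v_1,v_2,v_3,v_4\}$, and is trivial elsewhere. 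Hence $\tau \in \Gamma_0$ and $\tau$ commutes with every generator of $\Gamma_0$, which exhibits $\Gamma_0$ as the sesqui-extension of the $S_{n-4}$ on the large orbit with respect to $\rho_1$; Lemma~\ref{se0}(1) then yields $\Gamma_0 \cong C_2 \times S_{n-4}$.

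To see that $\Gamma_0$ itself satisfies the intersection property, I would invoke Lemma~\ref{max} at this lower level: $\Gamma_{0,1} = \langle \rho_2,\ldots,\rho_{r-1}\rangle$ is generated by $n-6$ consecutive single transpositions and is therefore the string C-group $S_{n-5}$; $\Gamma_{0,r-1}$ is a parabolic of the inductively known $\Gamma_{r-1}$ and so is a string C-group; and the intersection $\Gamma_{0,1,r-1}\cong S_{n-6}$ sits inside $\Gamma_{0,1}$ as a point stabilizer and is thus maximal. The same orbit analysis identifies $\Gamma_{0,r-1} \cong C_2 \times S_{n-5}$, which is maximal inside $\Gamma_0 \cong C_2 \times S_{n-4}$ because the $S_{n-5}$ factor is maximal in the $S_{n-4}$ factor. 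A concluding application of Lemma~\ref{max} then proves that $\Gamma$ is a string C-group.

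The main obstacle, as in several of the preceding propositions, is pinpointing the element that witnesses the central $C_2$ in $\Gamma_0$; once the identity $(\rho_1\rho_2)^3 = (v_1\,v_2)(v_3\,v_4)$ is verified, the remainder of the argument reduces to a cascade of applications of Lemma~\ref{max} together with the induction hypothesis.
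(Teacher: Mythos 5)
Your proof is correct and follows essentially the same route as the paper's: induction on $n$ for $\Gamma_{r-1}$ (which is again of type $(P1)$), identification of $\Gamma_0 \cong C_2 \times S_{n-4}$ as a sesqui-extension of the rank $n-5$ simplex with $\Gamma_{0,r-1} \cong C_2 \times S_{n-5}$ maximal in it, and a concluding application of Lemma~\ref{max}. Your additional layer of Lemma~\ref{max} certifying that $\Gamma_0$ satisfies the intersection property (via $\Gamma_{0,1}$ and $\Gamma_{0,1,r-1}$), together with the explicit witness $(\rho_1\rho_2)^3=(v_1\,v_2)(v_3\,v_4)$ for the central $C_2$, is slightly more detailed than the paper, which invokes Lemma~\ref{se0} directly, but the argument is the same.
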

\begin{proof}
To show that $\Gamma$ is a C-group, we consider the groups $\Gamma_{0}$, $\Gamma_{r-1}$, and $\Gamma_{0,r-1}$.
The group $\Gamma_0$ is a sesqui-extension of the rank $n-5$ simplex acting on $n-4$ points, and thus is a string C-group by Lemma~\ref{se0}.  Furthermore, the element $(\rho_{r-2} \rho_{r-3})^3$ acts as identity on the larger orbit of $\Gamma_{0}$ and thus $\Gamma_{0} \cong S_{n-4} \times 2$.  Similarly, $\Gamma_{0,r-1} \cong S_{n-5} \times 2$.
The group $\Gamma_{r-1}$ is assumed to be a C-group by induction (as it too is of type $(P1)$).
Since both $\Gamma_0$ and $\Gamma_{r-1}$ are string C-groups, and $\Gamma_{0,r-1}$ is maximal in $\Gamma_0$, by Lemma~\ref{max}, $\Gamma$ is a string C-group.
\end{proof}

\begin{proposition}\label{Q2}
The sggi $\Gamma$, described by the permutation representation graph $(Q2)$, is a C-group.
\end{proposition}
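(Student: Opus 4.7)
My plan is to apply Lemma~\ref{max} to $\Gamma = \langle \rho_0, \ldots, \rho_{r-1}\rangle$ with the parabolic subgroups $\Gamma_0$ and $\Gamma_{r-1}$, proceeding by induction on $n$ with the base case $n=11$ verified in \textsc{Magma}. The group $\Gamma_{r-1}$ has the same CPR graph as $(Q2)$ on $n-1$ vertices (with the last vertex $v_n$ isolated), so it is a string C-group by the inductive hypothesis. The non-inclusion $\rho_{r-1} \notin \Gamma_{r-1}$ is immediate since $v_n$ is fixed by $\Gamma_{r-1}$ but moved by $\rho_{r-1}$.

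The first substantive step is to identify $\Gamma_0$. Removing $\rho_0$ splits the CPR graph into a left component on $\{v_1, v_2, v_3, v_4\}$, on which $\rho_1, \rho_2, \rho_3$ act as consecutive transpositions generating $S_4$, and a right component on $\{v_5, \ldots, v_n\}$, on which $\rho_1, \ldots, \rho_{n-5}$ generate the standard simplex $S_{n-4}$. A Goursat-style argument, exploiting that $\rho_4$ is trivial on the left and is an odd permutation on the right, together with the fact that $A_{n-4}$ is the unique proper nontrivial normal subgroup of $S_{n-4}$ for $n-4 \geq 5$, shows that the kernel of the left projection is all of $S_{n-4}$, yielding $\Gamma_0 \cong S_4 \times S_{n-4}$. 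The same reasoning gives $\Gamma_{0,r-1} \cong S_4 \times S_{n-5}$, which is maximal in $\Gamma_0$ as the stabilizer of $v_n$ in the right factor.

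The heart of the argument is then showing that $\Gamma_0$ itself is a string C-group, which I will handle by a nested application of Lemma~\ref{max} to $\Gamma_0$ with parabolic subgroups $\Gamma_{0,1}$ and $\Gamma_{0, r-1}$. The group $\Gamma_{0, r-1}$ has the same orbit structure as $\Gamma_0$ but on $n-1$ vertices, so I strengthen the inductive hypothesis to assert that $\Gamma_0, \Gamma_{0,1}$ and $\Gamma_{0,1,2}$ are all string C-groups at each inductive stage. Removing $\rho_1$ shrinks the left orbit from $4$ to $3$ vertices, yielding $\Gamma_{0,1} \cong S_3 \times S_{n-5}$ by the same Goursat argument. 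A further application of Lemma~\ref{max} to $\Gamma_{0,1}$ with $\Gamma_{0,1,2}$ and $\Gamma_{0, 1, r-1}$ shrinks the left orbit to $\{v_1, v_2\}$ and gives $\Gamma_{0,1,2} \cong C_2 \times S_{n-6}$. This last group can be recognized as a sesqui-extension of the simplex $S_{n-6}$ on $\{v_7, \ldots, v_n\}$ with respect to $\rho_3$ and the involution $\tau = ((v_1, v_2), \mathrm{id})$; alternatively, a direct application of Lemma~\ref{max} with parabolic subgroups $\langle \rho_4, \ldots, \rho_{n-5}\rangle \cong S_{n-7}$ and $\langle \rho_3, \ldots, \rho_{n-6}\rangle$ (a string C-group by induction) dispatches it without worrying about whether $\tau$ lies in $\Gamma_{0,1,2}$.

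The main obstacle will be organizing the nested induction cleanly: the verification that $\Gamma_0$ is a string C-group essentially mirrors the outer induction on $n$, forcing me to carry a strengthened inductive statement covering all four groups $\Gamma, \Gamma_0, \Gamma_{0,1}, \Gamma_{0, 1, 2}$ at every stage. Once this is in place, the routine but careful computation at each layer is the Goursat identification: the kernel of the left projection in the relevant $S_k \times S_{n-k}$ is shown to be the full right factor because the generator $\rho_{k+1}$ is trivial on the shrinking left orbit and odd on the right, and $A_{n-k}$ is simple for $n-k \geq 5$. The maximality conditions in each use of Lemma~\ref{max} reduce to the standard maximality of point stabilizers $S_{m-1}$ in $S_m$.
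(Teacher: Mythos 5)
Your proposal is correct and follows essentially the same route as the paper: induction on $n$ with the base case verified by computer, Lemma~\ref{max} applied to the pair $(\Gamma_0,\Gamma_{r-1})$, and a nested application to $(\Gamma_{0,1},\Gamma_{0,r-1})$ to certify that $\Gamma_0$ is a string C-group. The only differences are cosmetic: you identify $\Gamma_0\cong S_4\times S_{n-4}$ by a Goursat/parity argument (using that $\rho_4$ is trivial on the small orbit and odd on the large one) where the paper exhibits the explicit elements $(\rho_2\rho_3\rho_4)^4$ and $(\rho_1\rho_2\rho_3\rho_4)^5$ generating $S_4$ on the small orbit, you reprove the structure of $\Gamma_{0,1}$ inline (correctly, as $S_3\times S_{n-5}$) where the paper cites the proof of Proposition~\ref{L2}, and you carry a strengthened induction hypothesis where the paper simply observes that $\Gamma_{0,r-1}$ and $\Gamma_{0,1,r-1}$ are string C-groups for free, being parabolic subgroups of $\Gamma_{r-1}$.
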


\begin{proof}
To show that $\Gamma$ is a C-group, we consider the groups $\Gamma_{0}$, $\Gamma_{r-1}$, $\Gamma_{0,1}$, $\Gamma_{0,r-1}$, and $\Gamma_{0,1,r-1}$.

The group $\Gamma_{r-1}$ is assumed to be a string C-group isomorphic to $S_{n-1}$ by induction.
The group $\Gamma_{0,1}$ is was shown to be a C-group isomorphic to $S_{n-4} \times S_3$ in the proof of Proposition~\ref{L2}.  Similarly, $\Gamma_{0,1,r-1} \cong S_{n-5} \times S_3$ which is maximal in $\Gamma_{0,1}$.
The group $\Gamma_{0,r-1}$ is also a string C-group as it is a parabolic subgroup of $\Gamma_{r-1}$, and thus by Lemma~\ref{max}, $\Gamma_0$ is a string C-group.

The group $\Gamma_0$ is a intransitive group acting on two orbits, of size 4 and $n-4$, and thus is a subgroup of $S_4 \times  S_{n-4}$.  Furthermore, it contains a full symmetric group acting on the $n-4$ points in one orbit.  The element $\tau_1:= (\rho_2 \rho_3 \rho_4)^4$ acts trivially on this orbit and as a three cycle on the smaller orbit.  The element $\tau_2:=(\rho_1 \rho_2 \rho_3 \rho_4 )^5$ also acts trivially on the larger orbit, and as a four-cycle on the smaller orbit.   Thus $\langle \tau_1, \tau_2 \rangle \cong S_4$, and therefore $\Gamma_0 \cong S_4 \times  S_{n-4}$.  A similar argument shows that $\Gamma_{0,r-1} \cong S_4 \times  S_{n-5}$, which is maximal in $\Gamma_0$.
Since both $\Gamma_0$ and $\Gamma_{r-1}$ are string C-groups, and $\Gamma_{0,r-1}$ is maximal in $\Gamma_0$, by Lemma~\ref{max}, $\Gamma$ is a string C-group.
\end{proof}
In Propositions~\ref{magmabads} through~\ref{Q2} we proved that nine graphs, namely $(B)$, $(E)$, $(I)$, $(N1)$, $(N2)$, $(N3)$, $(O2)$(with $i=0$), $(P1)$ and $Q2$, correspond to rank $n-4$ string C-groups isomorphic to $S_n$ for $n\geq 11$. So far we have shown that this is the complete  list of rank $n-4$ transitive string C-groups with connected diagram when all maximal parabolic subgroups are intransitive.

\section{Transitive permutation groups}\label{sectiontrans}

In this section, we prove that, under the hypotheses of Theorem~\ref{maintheorem}, all the maximal parabolic subgroups must be intransitive. This permits to conclude the proof of Theorem~\ref{maintheorem}.
We use the following result, due to Peter J. Cameron and the authors.

\begin{theorem}\cite{CFLM2014}\label{Cameron}
Let $\Gamma$ be a string C-group of rank $r$ which is isomorphic to a
transitive subgroup of $\Sym_n$ other than $\Sym_n$ or $\Alt_n$. Then one of
the following holds:
\begin{enumerate}
\item $r\le n/2$;
\item $n \equiv 2 \mod{4}$, $r = n/2+1$ and $\Gamma$ is $\Cyc_2\wr \Sym_{n/2}$. The generators are
\begin{center}
$\rho_0 = (1,n/2+1)(2,n/2+2)\ldots (n/2,n)$;

$\rho_1 = (2,n/2+2)\ldots (n/2,n)$;

$\rho_i = (i-1,i)(n/2+i-1,n/2+i)$ for $2\leq i \leq n/2$.
\end{center}
Moreover the Sch\"afli type is $[2,3, \ldots, 3,4]$. 
\item $\Gamma$ is transitive imprimitive and is one of the examples appearing in Table~\ref{ploys}.
\item $\Gamma$ is primitive. In this case, $\Gamma$ is obtained from the permutation representation of degree 6 of $\Sym_5 \cong \PGL_2(5)$ and it is the 4-simplex of Sch\"afli type $[3,3,3]$.

\begin{table}
\begin{center}
\begin{tabular}{|c|c|c|c|l|}
\hline
$Degree$&$Number$&Structure&Order&Sch\"afli type\\
\hline
\hline
6&9&$\Sym_3 \times \Sym_3$&36&$[3,2,3]$\\
\hline
6&11&$2^3:\Sym_{3}$&48&$[2,3,3]$\\
6&11&$2^3:\Sym_{3}$&48&$[2,3,4]$\\
\hline
8&45&$2^4:\Sym_{3}:\Sym_{3}$&576&$[3,4,4,3]$\\
\hline
\end{tabular}
\caption{Examples of transitive imprimitive string C-groups of degree $n$ and rank $n/2+1$ for $n\leq 9$.}\label{ploys}
\end{center}
\end{table}

\end{enumerate}

\end{theorem}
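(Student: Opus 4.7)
The plan is to exploit the dichotomy between primitive and imprimitive actions, treating each separately, and then to exhaust finitely many small-degree exceptions.

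First, suppose $\Gamma$ is primitive on $\{1,\dots,n\}$ but is neither $\Sym_n$ nor $\Alt_n$. By Whiston's theorem, the rank of any string C-group equals the size of an independent generating set of $\Gamma$. Applying the bounds of Liebeck--Praeger--Saxl (and their refinements via the O'Nan--Scott theorem) on minimal base sizes of primitive groups not containing $\Alt_n$, together with the standard fact that the size of an independent generating set is controlled by the base size, one obtains $r = O(\log n)$, which is far below $n/2$ for $n$ sufficiently large. I would then enumerate the primitive groups of small degree using the classification and check computationally which admit string C-group representations of rank $> n/2$; the only survivor is the $4$-simplex coming from the exceptional degree-$6$ representation of $\Sym_5 \cong \PGL_2(5)$, yielding case~(4).

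Second, suppose $\Gamma$ is imprimitive. Fix a minimal block system $\mathcal{B}$ with $m$ blocks of size $k$, so $n = mk$ with $m,k \geq 2$. Let $K$ be the kernel of the action on $\mathcal{B}$ and let $J = \{i : \rho_i \notin K\}$. The commuting property of string C-groups, together with the intersection property and the transitivity of $\Gamma$, forces $J$ to be a contiguous interval in the string diagram: otherwise one could split off a proper factor of $\Gamma$ violating the intersection property. The induced string C-group $\overline{\Gamma}$ on $\mathcal{B}$ has rank at most $|J|$, and by induction on $n$ satisfies one of the conclusions of the theorem. The generators in $I = \{0,\dots,r-1\}\setminus J$ act inside blocks, and split into a ``left'' and ``right'' part around $J$; the intersection property bounds how many of them can be independent.

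A careful accounting then yields an inequality of the form $r \leq (\mbox{rank of }\overline{\Gamma}) + (k-1)$ or similar, so that $r > n/2$ forces either $k = 2$ with $\overline{\Gamma} \cong \Sym_{n/2}$ (yielding the hyperoctahedral example $\Cyc_2 \wr \Sym_{n/2}$ of case~(2), where $n \equiv 2 \pmod 4$ arises from the parity obstruction of making the central involution an honest generator) or one of finitely many small exceptional degrees to be checked by \textsc{Magma}, producing the table in case~(3). The main obstacle is the imprimitive analysis: one must control the interaction of within-block generators and block-permuting generators simultaneously, and prove the rigidity statement that the block-permuting generators form an interval in the diagram, since this is what converts the problem into an inductive one and pins down the extremal wreath-product example.
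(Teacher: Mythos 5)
First, a point of order: this paper does not prove Theorem~\ref{Cameron} at all. It is quoted verbatim from \cite{CFLM2014} and used as a black box in Section~\ref{sectiontrans}, so there is no in-paper proof to compare your proposal against; I can only assess your sketch on its own merits and against what the cited proof is known to do. Your primitive-case outline is essentially the standard (and correct) strategy: the distinguished generators of a string C-group form an independent generating set by the intersection property, hence give a strictly descending subgroup chain and $2^r\le|\Gamma|$, and order bounds for primitive groups not containing $\Alt_n$ (Mar\'oti's theorem, which is reference \cite{Mar2002} in this paper's bibliography, is the usual tool rather than base-size bounds) force $r\le n/2$ for all but finitely many degrees, which are then checked by computer. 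Two cautions: what you call ``Whiston's theorem'' is not that (Whiston's result is the bound $\mu(\Sym_n)=n-1$; the fact you need is just that the rank is \emph{at most} the maximal size of an independent generating set, not equal to it), and the claimed $r=O(\log n)$ is too strong for the large primitive groups (product actions, $\Sym_m$ on $k$-subsets), which must be handled as exceptions to the generic order bound; neither issue is fatal.

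The genuine gap is the imprimitive case, which is the entire content of the theorem. Your argument rests on two unproven assertions: (i) that the set $J$ of generators moving blocks of a minimal block system is a contiguous interval of the string diagram, and (ii) that ``a careful accounting'' yields $r\le(\mbox{rank of }\overline{\Gamma})+(k-1)$ ``or similar.'' For (i), the justification offered --- that otherwise one could split off a proper factor violating the intersection property --- does not obviously work: commutation of $\rho_i$ and $\rho_j$ for $|i-j|>1$ says nothing about whether an intermediate $\rho_l$ lies in the kernel $K$, and you give no mechanism producing the claimed direct factor. For (ii), the exact form of the inequality is exactly what decides the theorem: with $m$ blocks of size $k$ one needs something like $r\le m+k-1$ together with a sharp analysis of the equality case to recover $r\le n/2$ via $(m-2)(k-2)\ge0$ and to isolate $\Cyc_2\wr\Sym_{n/2}$ when $k=2$. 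Writing ``or similar'' here leaves the theorem unproved. Finally, the congruence $n\equiv2\pmod 4$ in case~(2) and the uniqueness of the listed generating set are attributed to ``the parity obstruction of making the central involution an honest generator'' without any argument; this, together with the derivation of the exceptional Table~\ref{ploys}, is additional substantive work your sketch does not supply. In short: right global architecture, but the rigidity lemma and the counting inequality that you yourself identify as ``the main obstacle'' are precisely the proof, and they are missing.
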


\begin{corollary}
Let $1\leq l \leq 4$, and $n\geq 3+2l$ when $r = n-l$. 
Let  $\Gamma$ be a rank $r$ string C-group with connected diagram and isomorphic to a transitive permutation group of degree $n$
Then $\Gamma_i$ is an intransitive group for each $i=0, \ldots, r-1$.
\end{corollary}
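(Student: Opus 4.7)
The plan is to argue by contradiction. Assume that some maximal parabolic $\Gamma_j$ of $\Gamma$ is transitive on the $n$ points. Then $\Gamma_j$ is itself a string C-group of rank $r-1$ (though its string diagram will be disconnected when $0<j<r-1$) acting transitively on $n$ letters, so I can apply Theorem~\ref{Cameron} to it. The conclusion is that either $\Gamma_j\cong\Sym_n$, $\Gamma_j\cong\Alt_n$, or $\Gamma_j$ falls into one of the numerical cases (1)--(4) of Theorem~\ref{Cameron}; I would rule each of these possibilities out in turn.

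The four cases (1)--(4) of Theorem~\ref{Cameron} can be eliminated almost immediately using the hypotheses $r=n-l$, $1\le l\le 4$ and $n\ge 2l+3$. Case (1) requires $r-1\le n/2$, whereas substituting $r-1=n-l-1$ and $n\ge 2l+3$ gives $r-1>n/2$. Case (2) requires $r-1=n/2+1$ with $n\equiv 2\pmod 4$, hence $n=2l+4$ and $l$ odd, leaving only $(l,n)\in\{(1,6),(3,10)\}$. Cases (3) and (4) list specific examples with $n\le 8$, and matching the rank $r-1=n-l-1$ against the entries in Table~\ref{ploys} and the degree-$6$ primitive example of case (4) adds only the pair $(l,n)=(2,8)$.

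For the generic cases $\Gamma_j\cong\Sym_n$ and $\Gamma_j\cong\Alt_n$, the symmetric one is immediate: if $\Gamma_j\cong\Sym_n$ then $\Gamma_j\le\Gamma\le\Sym_n$ forces $\Gamma=\Gamma_j$, contradicting $\rho_j\notin\Gamma_j$ (which holds by the intersection property of $\Gamma$). For $\Gamma_j\cong\Alt_n$ I would exploit the commuting property of a string C-group: when $0<j<r-1$ it gives $\Gamma_j=\Gamma_{<j}\times\Gamma_{>j}$, and since $\Alt_n$ is simple for $n\ge 5$ and hence indecomposable as a direct product, one factor must be trivial, forcing $j\in\{0,r-1\}$ and the diagram of $\Gamma_j$ to be connected. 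Then $\Gamma_j$ is a connected-diagram string C-group of rank $r-1\ge n-5$ isomorphic to $\Alt_n$, which is impossible for the $n$ in our range by the known upper bound (roughly $\lfloor(n-1)/2\rfloor$) on the rank of $\Alt_n$ as a string C-group.

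The three small exceptional pairs $(l,n)\in\{(1,6),(2,8),(3,10)\}$ surviving from cases (2)--(4) would be dispatched by explicit finite analysis: $\Gamma_j$ has a prescribed small isomorphism type in each case, so one enumerates the transitive overgroups in $\Sym_n$ of the prescribed order, lists all rank-$r$ string C-group representations of $\Gamma$ (by hand or with \textsc{Magma}), and checks that none of them has $\Gamma_j$ as a maximal parabolic of the required type. The main obstacle is the $\Alt_n$ case, which forces one to quote an external bound on the maximum rank of $\Alt_n$ as a string C-group; once this input is available, the rest of the proof is routine bookkeeping and a finite check.
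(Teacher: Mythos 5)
Your overall strategy is the same as the paper's: assume some maximal parabolic $\Gamma_j$ is transitive, apply Theorem~\ref{Cameron} to $\Gamma_j$ (a string C-group of rank $r-1$), kill case (1) via $n-l-1>n/2$ (forced by $n\ge 2l+3$), reduce case (2) to $n\in\{6,10\}$ and cases (3)--(4) to $n\in\{6,8\}$ by matching ranks against Table~\ref{ploys}, and dispose of the surviving degrees by a finite \textsc{Magma} check. (For $n=10$ the paper is slightly more refined, first pinning $\Gamma$ down to $\Sym_{10}$ using the order $3840$ of $\Cyc_2\wr\Sym_5$ and a parity argument before computing, but both routes end in a computation.) You actually go further than the paper in one respect: Theorem~\ref{Cameron} excludes $\Sym_n$ and $\Alt_n$ by hypothesis, and you treat these two possibilities for $\Gamma_j$ explicitly, whereas the paper's proof is silent about them. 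Your $\Sym_n$ argument is fine, and your reduction in the $\Alt_n$ case --- $\Gamma_j=\Gamma_{<j}\times\Gamma_{>j}$ for $0<j<r-1$ by the commuting and intersection properties, hence $j\in\{0,r-1\}$ and the diagram of $\Gamma_j$ connected, by simplicity of $\Alt_n$ --- is correct.

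The gap is in how you close the $\Alt_n$ branch. You dismiss it by citing ``the known upper bound (roughly $\lfloor(n-1)/2\rfloor$) on the rank of $\Alt_n$ as a string C-group.'' First, that upper bound is not among the tools this paper has available: the cited works \cite{flm,flm2} provide constructions (lower bounds), not the upper bound, so it cannot be invoked as a free external input. Second, and more seriously, even granting the bound it fails to cover the case $l=4$, $n=11$: there $\Gamma_j$ would be a connected-diagram string C-group of rank $r-1=6$ isomorphic to $\Alt_{11}$, and $\Alt_{11}$ is exactly the exceptional alternating group that attains rank $6>\lfloor 10/2\rfloor$, with such representations occurring in degree $11$ (see \cite{flm}). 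So ``impossible for the $n$ in our range'' is false as stated, and this subcase needs a separate argument or a further finite check --- for instance, verifying that no rank-$6$ degree-$11$ representation of $\Alt_{11}$ extends to a rank-$7$ string C-group representation of a transitive group of degree $11$.
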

\begin{proof}
Case (1) of Theorem~\ref{Cameron} can never happen.
Case (2) may only happen when $n=6$ or $n=10$.
Case(3) and (4) imply that $n = 6$ or $n=8$.
The cases where $n=6$ or $8$ are easily checked with {\sc Magma}.
When $n=10$, $n-4 = n/2 + 1$ and thus, there could exist a string C-group of rank 7 and degree 10 having a maximal parabolic subgroup that is transitive.
If that is the case, $\Gamma$ is a transitive group having a subgroup $\Gamma_i$ of order 3840 as the group of case (2) in Theorem~\ref{Cameron} is $\Cyc_2\wr \Sym_{5}$.
There are 45 transitive groups of degree 10 and only two of them, namely $\Sym_{10}$ and $\Alt_{10}$ may contain subgroups of order 3840. The alternating group is excluded as the first generator of $\Gamma_i$ is composed of five transpositions and hence is an odd permutation. Only $\Sym_{10}$ remains.
All string C-groups representations of rank 7 of $\Sym_{10}$ can be easily computed with {\sc Magma} and checked not to have any maximal parabolic subgroup that is transitive.
\end{proof}

\bibliographystyle{amsplain}

\end{document}